\documentclass[12pt,reqno,a4paper]{amsart}
\usepackage{xcolor}
\usepackage{amsmath}
\usepackage{amsthm}
\usepackage{bbm}
\usepackage{amssymb}
\usepackage{mathrsfs}
\usepackage{graphicx}
\usepackage{tikz}
\usepackage{bm}
\usepackage{ucs}
\usepackage{wrapfig}
\usepackage{color}
\usepackage{float}
\usepackage{fullpage}
\usepackage{epstopdf}
\usepackage[breaklinks,colorlinks=true,linkcolor=blue,citecolor=red,
  backref=page]{hyperref}

\newtheorem{remark}{Remark}

\newtheorem{prop}{Proposition}
\newtheorem{lemma}{Lemma}
\newtheorem{theorem}{Theorem}
\newtheorem{corollary}{Corollary}

\pagenumbering{arabic}

\newcommand{\ind}{\,\mbox{d}}
\newcommand{\normmm}[1]{{\left\vert\kern-0.3ex\left\vert\kern-0.25ex\left\vert #1 
    \right\vert\kern-0.25ex\right\vert\kern-0.3ex\right\vert}}

\begin{document}
\date{}

\title{Data-driven basis for reconstructing the contrast in inverse  scattering: Picard criterion, regularity, regularization, and stability}
\author[]{Shixu Meng$^1$}\footnotetext[1]{Academy of Mathematics and Systems Science, Chinese Academy of Sciences,
Beijing 100190, China.  {\tt shixumeng@amss.ac.cn}}
\begin{abstract}
We consider the inverse medium scattering of reconstructing the medium contrast using Born data, including the full aperture, limited-aperture, and multi-frequency data. We propose data-driven basis functions for these inverse problems based on the generalized prolate spheroidal wave functions and related eigenfunctions. Such data-driven eigenfunctions are eigenfunctions of a Fourier integral operator;  they remarkably extend analytically to the whole space, are  doubly orthogonal, and are complete in the class of band-limited functions. We first establish a Picard criterion for reconstructing the contrast using the data-driven basis, where the reconstruction formula can also be understood from the viewpoint of data processing and analytic extrapolation. Another salient feature associated with the generalized prolate spheroidal wave functions  is that the data-driven basis for a disk is also a basis for a Sturm-Liouville differential operator.  With the help of  Sturm-Liouville theory, we estimate the $L^2$ approximation error for a spectral cutoff approximation of $H^s$ functions. This yields a spectral cutoff regularization strategy for noisy data and an explicit stability estimate for contrast in $H^s$ { ($0<s<1/2$)} in the full aperture case. In  the limited-aperture  and multi-frequency cases, we also obtain spectral cutoff regularization strategies for noisy data and  stability estimates for a class of contrast.  
\end{abstract} 

\maketitle

\textbf{Key Words.} inverse medium scattering,  generalized prolate spheroidal wave function,  Born approximation, Picard criterion, regularization, stability.

\section{Introduction} \label{section introduction}
Inverse scattering is of great importance in non-destructive testing, medical imaging, geophysical exploration, and numerous problems associated with target identification. 
Let us first introduce the  inverse medium scattering problem in two dimensions.   Let $k>0$ be the  wave number. A plane wave  takes the following form:
\begin{equation*} 
e^{i k x \cdot \hat{\theta}}, \quad \hat{\theta} \in \mathbb{S} :=\{ x \in \mathbb{R}^2: |x|=1\},
\end{equation*}
where $\hat{\theta}$ is the direction of propagation.
Let $\Omega \subset \mathbb{R}^2$ be an open and bounded set with
Lipschitz boundary $\partial \Omega$ such that $\mathbb{R}^2 \backslash \overline{\Omega}$  is connected. The set $\Omega$ is referred to as the medium. Let the real-valued function $q(x) \in L^\infty(\Omega)$ be the contrast of the medium supported in $\Omega$ (which gives rise to the refractive index  { ${1+q}$}) and $q \ge 0$ on $\Omega$. The medium scattering due to a plane wave $e^{i k x \cdot \hat{\theta} }$ is to find total wave field $e^{i k x \cdot \hat{\theta} } + u^s(x;\hat{\theta};k)$ belonging to $H^1_{loc}(\mathbb{R}^2)$  such that
\begin{eqnarray}
\Delta_x \big( u^s(x;\hat{\theta};k) + e^{ikx\cdot \hat{\theta}} \big) + k^2 \left(1+q(x)\right) \big( u^s(x;\hat{\theta};k) + e^{ikx\cdot \hat{\theta}} \big) =  0 \quad &\mbox{in}& \quad \mathbb{R}^2, \quad  \label{medium us eqn1}\\
\lim_{r:=|x|\to \infty} \sqrt{r}  \big( \frac{\partial u^s(x;\hat{\theta};k)}{\partial r} -ik u^s(x;\hat{\theta};k)\big) =0,  \label{medium us eqn2}
\end{eqnarray}
{where \eqref{medium us eqn2} holds  uniformly in all directions}. The scattered wave field is $u^s(\cdot;\hat{\theta};k)$. A solution is called   radiating   if it satisfies \eqref{medium us eqn2}.   This scattering problem is well-posed and there exists a unique radiating solution to \eqref{medium us eqn1}--\eqref{medium us eqn2}; cf. \cite{colton2012inverse,kirsch2008factorization}. We refer to \eqref{medium us eqn1}--\eqref{medium us eqn2} as the full   model.
 
 Born approximation is a widely used  method to treat inverse problems; cf. \cite{colton2012inverse,moskowSchotland08,serov07}.
 In the Born approximation region, one can approximate the solution $u^s(\cdot;\hat{\theta};k)$ by its Born approximation $u^s_b(\cdot;\hat{\theta};k)$ \cite{colton2012inverse}, which is the unique radiating solution to
\begin{eqnarray*}
\Delta_x u^s_b(x;\hat{\theta};k) + k^2  u^s_b(x;\hat{\theta};k) = -k^2 q(x) e^{ikx \cdot \hat{\theta}} \quad &\mbox{in}& \quad \mathbb{R}^2. \quad  \label{medium us born eqn1}
\end{eqnarray*}
Every radiating solution of the Helmholtz equation has the following asymptotic
behavior at infinity \cite{cakoni2016qualitative}:
\begin{equation*} 
u_b^{s}(x;\hat{\theta};k)
=\frac{e^{i\frac{\pi}{4}}}{\sqrt{8k\pi}} \frac{e^{ikr}}{\sqrt{r}}\left\{\widetilde{u}_b^{\infty}(\hat{x};\hat{\theta};k)+\mathcal{O}\left(\frac{1}{r}\right)\right\}\quad\mbox{as }\,r=|x|\rightarrow\infty,
\end{equation*}
uniformly with respect to all directions $\hat{x}:=x/|x|\in\mathbb{S}$. The complex-valued function $\widetilde{u}_b^{\infty}(\hat{x};\hat{\theta};k)$ defined on $\mathbb{S}$ is known as the scattering amplitude or  far-field pattern with $\hat{x}\in\mathbb{S}$ denoting the observation direction. It directly follows from \cite{cakoni2016qualitative} that
\begin{eqnarray} \label{born uinfty integral representation}
\widetilde{u}_b^{\infty}(\hat{x};\hat{\theta};k) = k^2 \int_\Omega  e^{-ik \hat{x} \cdot p'} q(p') e^{ik p' \cdot \hat{\theta}} \ind p'.
\end{eqnarray}

The linear sampling method \cite{ColtonKirsch96} and factorization method \cite{Kirsch98} developed in the 1990s apply to the full model of the inverse medium scattering and  apply to the Born model \eqref{born uinfty integral representation} more explicitly (see \cite{Kirsch17} for a comparison between the full far-field operator and the Born far-field operator). Using a far-field operator $L^2(\mathbb{S}) \to L^2(\mathbb{S})$ whose kernel is the far-field pattern, these sampling methods are capable of reconstructing the support of the contrast $q$, which requires little a priori information about the scattering objects, and provides both theoretical justifications and robust numerical algorithms. In particular, the linear sampling method \cite{ColtonKirsch96} and the factorization method \cite{Kirsch98} as well as the convex scattering support \cite{KusiakSylvester05,SylvesterKelly}  use an infinite series and Picard criterion for the reconstruction. We refer the reader to \cite{cakoni2016inverse,kirsch2008factorization} for a more comprehensive introduction. 

In this paper we follow the same idea and aim to reconstruct not only the shape but also the contrast by  \textit{data-driven basis functions} related to the \textit{generalized  prolate spheroidal wave functions} \cite{Slepian64}. The idea begins with formulating the inverse problem as follows (see also \cite[Section 7.4]{Kirsch21}).

\textbf{Inverse problem} with full aperture Born data: determine the contrast $q \in L^2(\Omega)$ from 
\begin{equation*}
\{{ u_{b}^{\infty}}(p;k): p \in B(0,2)\}
\end{equation*}
where ${ u_{b}^{\infty}}(p;k)$ is given by \eqref{FM near field case 2layer filtered data def 1 born volume eqv} 
\begin{eqnarray} \label{FM near field case 2layer filtered data def 1 born volume eqv}
{ u_{b}^{\infty}}(p;k)  := \int_\Omega e^{i k p\cdot p'} q(p')   \ind p'.
\end{eqnarray}
This equivalent formulation is due to \eqref{born uinfty integral representation} and the fact that $B(0,2)$ is the interior of  $\{ \hat{\theta}-\hat{x}:\hat{x}, \hat{\theta} \in \mathbb{S}\}$. Similar to the linear sampling/factorization method where one investigates the far-field operator $L^2(\mathbb{S}) \to L^2(\mathbb{S})$, by processing the data as \eqref{FM near field case 2layer filtered data def 1 born volume eqv} we consider an operator $L^2(\Omega) \to L^2(B(0,2))$  (which will be formulated shortly as an operator $L^2(D_F) \to L^2(D_F)$ for some set $D_F$) to treat the inverse problem.  Note that by formulating the Born inverse scattering problem as \eqref{FM near field case 2layer filtered data def 1 born volume eqv}, the comparison  between the full far-field operator and the operator associated with \eqref{FM near field case 2layer filtered data def 1 born volume eqv} may be different from \cite{Kirsch17}.

The inverse problems in the form of \eqref{FM near field case 2layer filtered data def 1 born volume eqv} is important in science, engineering, and technology. Its application is beyond Born inverse scattering:  it also merits application in the inverse source problem (in which case $q$  stands for the intensity of the source, $p$  stands for the observation direction, and $k$ still stands for the frequency/wave number), computerized tomography \cite{natterer2001book}, and Fourier analysis \cite{Slepian64,Slepian78,Slepian61,ZLWZ20}.  We are  most interested in inverse scattering and thereby have chosen to introduce the problem in the context of inverse scattering. 

Our data-driven basis in this paper yields highly accurate and efficient reconstruction algorithms. Such a basis stems from an eigensystem $\{\psi_{m,n,\ell}(\cdot;c), \alpha_{m,n}(c)  \}_{m,n \in \mathbb{N}}^{\ell  \in \mathbbm{I}(m)}$    that satisfies
\begin{eqnarray*} 
 \int_{B(0,1)} e^{i c p \cdot p' } \psi_{m,n,\ell}(p';c) \ind p'  
= \alpha_{m,n}(c)  \psi_{m,n,\ell}(p;c), \qquad p \in B(0,1),
\end{eqnarray*}
where $c>0$ is a positive constant, $\mathbb{N}=\{0,1,2,\cdots\}$, $\mathbbm{I}(m)=\{1\}$ if $m=0$, and $\mathbbm{I}(m)=\{1,2\}$ if $m\ge1$. Orthogonal basis functions serve as an important tool in analyzing inverse problems and in establishing regularization strategies \cite[Chapters 2--3]{Kirsch21}. 
This data-driven basis  originated in the work of Slepian \cite{Slepian64} who gave a thorough theory on  the generalized prolate spheroidal wave function (which is the radial part of $\psi_{m,n,\ell}(\cdot;c)$) and the related Fourier analysis in multiple dimensions. In one dimension, the generalized  prolate spheroidal wave function corresponds to the prolate spheroidal wave function \cite{Slepian61}. Remarkably, the (generalized) prolate spheroidal wave function is the eigenfunction of both a Fourier type integral operator and a Sturm-Liouville differential operator, which merits important applications in analytic extrapolation, approximation theory, uncertainty quantification, and Fourier analysis \cite{Slepian64,Slepian78,Slepian61,ZLWZ20}. Though such a data-driven basis applies naturally to \eqref{FM near field case 2layer filtered data def 1 born volume eqv}, the application of generalized prolate spheroidal wave functions is rather limited in multi-dimensional inverse scattering and inverse source problems to the best of our knowledge. Indeed, Slepian's discrete prolate spheroidal wave function (in one dimension) \cite{Slepian78} applied naturally to the limited angle problem of computerized tomography \cite[Chapter VI]{natterer2001book}. It amounts to extrapolating the limited angle data to obtain the full angle data. Recently, Slepian's discrete prolate spheroidal wave function was applied in limited-aperture inverse scattering in $\mathbb{R}^2$ \cite{DLMZ2021}. In particular, the discrete prolate spheroidal wave functions are  doubly orthogonal (over the limited-aperture and the full aperture) and they serve as a data-driven basis for data completion or a Galerkin projection basis for the limited-aperture factorization method \cite[Corollary 2.17]{kirsch2008factorization}.  It would be desirable to develop an explicit stability estimate for the data completion algorithm of \cite{DLMZ2021} in the spirit of this work. Note that the discrete prolate spheroidal wave functions can be derived by a high-fidelity trigonometric approximation of the data followed by applying a reduced basis method with singular value decomposition \cite{DLMZ2021}; this is in line with the reduced basis method in \cite{quarteroni2016book}. As such we are motivated to call the set of prolate related functions  a data-driven basis. { In a broader context, it is possible to apply a reduced basis method  such as \cite{quarteroni2016book} or  other physics-informed basis functions using machine learning such as \cite{Karniadakis21} to study the inverse problems. The data-driven basis in this paper can also be learned via a Legendre-Galerkin neural network; in this paper we study this data-driven basis analytically  by the generalized prolate  spheroidal wave functions 
and remark that our result indeed serves as a mathematical foundation for relevant machine learning algorithms.} We also note when our work is near completion that \cite{novikov22} applied the one dimensional prolate spheroidal wave functions together with the Radon transform to study reconstructions from the Fourier transform on the ball.

In this work we show how to use  the generalized prolate spheroidal wave functions and their related eigenfunctions to solve the Born inverse scattering problem. We emphasize the following: (1) First, the generalized prolate spheroidal wave functions and their related eigenfunctions \cite{Slepian64} form a data-driven basis for a Fourier integral operator associated with \eqref{FM near field case 2layer filtered data def 1 born volume eqv}. This allows us to establish a Picard criterion to reconstruct the contrast. This is in the spirit of the factorization/linear sampling method. (2) Second, note that if one has the knowledge of the data in the whole space $\mathbb{R}^2$ (which is guaranteed by unique continuation in theory), then one can solve for the contrast by classical inverse Fourier transform. The data-driven basis remarkably extends analytically to $\mathbb{R}^2$, is  doubly orthogonal, and is complete in the class of band-limited functions \cite{Slepian64,Slepian61}. We show that  the reconstruction by Picard criterion can be understood from the viewpoint of data processing where one extrapolates the data to $\mathbb{R}^2$ first and then applies the inverse Fourier transform. The ill-posed nature of analytic extrapolation  is revealed by the decay of the eigenvalues associated with the data-driven basis. (3) Third, the generalized prolate spheroidal wave functions (i.e., radial part of the data-driven basis) is also a basis for a Sturm-Liouville differential operator in the radial variable \cite{Slepian64}, and remarkably the data-driven basis (in the two-dimensional variable $x$) is indeed a basis for another self-adjoint, positive definite Sturm-Liouville differential operator in the variable $x$. Thereby one can study the regularity estimate using such a Sturm-Liouville operator. In particular, we show that the data-driven basis yields a natural definition of a suitable Sobolev space $\widetilde{H}^s_c$ in connection with the classical Sobolev space $H^s$ and we estimate the $L^2$ approximation error for a spectral cutoff approximation of functions in $H^s$, $0<s\le 1$. This  type of estimate is of independent interest in approximation theory using the data-driven basis. (4) Last but not least, as the data is ``on the right hand side'' of the operator equation, by the regularity estimate we are able to obtain an explicit regularization strategy for noisy data and establish a stability estimate for contrast (or more precisely, an extension of the contrast) belonging to the classical Sobolev space $H^s$, $0<s<1/2$. {  Such a stability estimate is expected to give a possible H\"{o}lder-logarithmic type stability estimate by choosing appropriate parameters in the regularization.}

Limited-aperture data (cf. \cite{Haddar_etal17,BaoLiu,harrisRezac22,IkehataNiemiSiltanen,LiLiuZou14}) and multi-frequency partial data (cf. \cite{AlaHuLiuSun,BaoLiLinTriki,GriesmaierSchmiedecke-source,LiuMeng2021,meng22}) pose great challenges in inverse scattering. These partial data indicate that inverse problems are more ill-posed. In this work, in addition to the full aperture case, we  moreover study  the following limited-aperture   and   multi-frequency inverse problems.

\textbf{Inverse problem} with limited-aperture   and   multi-frequency  data: determine the contrast $q \in L^2(\Omega)$ from
\begin{itemize}
\item[-] limited-aperture Born data $\{ \widetilde{u}_b^{\infty}(\hat{x};\hat{\theta};k): \hat{x}, \hat{\theta} \in \mathbb{S}_L\}$, $\mathbb{S}_L:=\{x: x \in \mathbb{S}, \arg x \in [-\Theta,\Theta],  0<\Theta<\pi\}$,
where these data are equivalent to
\begin{equation*}
 \{{ u_{b}^{\infty}}(p;k): p \in L\};
\end{equation*}
here $L$ is the interior of $\{\hat{\theta}-\hat{x}: \hat{x}, \hat{\theta} \in \mathbb{S}_L\}$ which is symmetric with respect to the origin, and ${ u_{b}^{\infty}}$  is given by \eqref{FM near field case 2layer filtered data def 1 born volume eqv}.
\item[-] multi-frequency Born data with two opposite observation directions $\{\widetilde{u}_b^{\infty}(\pm \hat{x}^*;  \hat{\theta};k): \hat{\theta} \in \mathbb{S}, k \in (0,K), K>0\}$
which are equivalent to
\begin{equation*}
 \{{ u_{b}^{\infty}}(p;K): p \in M\};
\end{equation*}
where $M$ is the interior of $\{a\hat{\theta} \pm a\hat{x}^*: \hat{\theta} \in \mathbb{S}, a \in (0,1)\}$ which is symmetric with respect to the origin, and ${ u_{b}^{\infty}}$ is given by \eqref{FM near field case 2layer filtered data def 1 born volume eqv}.
\end{itemize}
In each of the limited-aperture  and multi-frequency cases, the inverse problem corresponds to inverting the integral operator with data in a   symmetric set $L$ and $M$, respectively (as opposed to the full aperture case where we invert the integral operator with data in a disk $B(0,2)$).

The application of the generalized prolate spheroidal wave functions (and their related functions) is far less developed for the limited-aperture and multi-frequency inverse scattering problems. Remarkably, there also exists a  data-driven basis for a general symmetric set \cite{Slepian64}. The above Picard criterion and analytic extrapolation apply in exactly the same way to the limited-aperture and multi-frequency cases. Due to the fact that  the area of   the symmetric set is in general  less than the area of $B(0,2)$ in the full aperture case, the Picard criterion and analytic extrapolation expose more ill-posedness (see \cite{DLMZ2021,LMZ22,Tsogka_2016} for similar observations).
Due to the lack of a Sturm-Liouville theory for the general symmetric set (to the best of our knowledge),  the stability estimate differs from the one in full aperture case, where the stability estimate for partial data relies on less explicit a priori information on the contrast (for instance, we require that  the contrast belongs to the range of some operator). 

The remainder of the paper is organized as follows. In Section \ref{section GPSWF}, we summarize the  generalized prolate spheroidal wave functions and Fourier analysis \cite{Slepian64} that are needed in our study. In Section \ref{Picard Criterion and analytic extrapolation} we  generate the data-driven basis for the full aperture data and establish a Picard criterion to reconstruct the contrast  in the spirit of the factorization/linear sampling method. We also show that the reconstruction by the Picard criterion can be understood from the viewpoint of data processing where one extrapolates the data to $\mathbb{R}^2$ first and then applies the inverse Fourier transform. Section \ref{section full aperture data regularity regularization stability} is devoted to  the regularity estimate, regularization strategy, and stability estimate. In particular, with the help of the Sturm-Liouville theory developed in Section \ref{section SLP}, we estimate in Section \ref{section approximation in L2} the $L^2(B(0,1))$ approximation error for a  spectral cutoff approximation of functions in $ H^s(B(0,1))$, $0<s \le 1$. This allows us to develop the spectral cutoff regularization and its stability estimate in Section \ref{section full aperture regularization stabiltiy}. Finally we develop in Section \ref{section limited-aperture multi-frequency data} the Picard criterion and stability estimate for the limited-aperture  and multi-frequency cases.

 Throughout the paper,  we consistently use the superscript/subscript ``F'',``L'', and``M'' to represent that relevant functions, constants, and operators are for the full aperture data, the limited-aperture data, and the multi-frequency partial data, respectively.

\section{Generalized prolate spheroidal wave functions and Fourier Analysis} \label{section GPSWF}
In this section, we summarize some of the main results \cite{Slepian64} that are needed in our study. Let $c$ be a positive constant. We are interested in finding the eigensystem of the integral operator  $\mathcal{F}_A^c: L^2(A) \to L^2(A)$  
\begin{eqnarray*}
(\mathcal{F}_A^c\psi)(p) = \int_{A} e^{i c p \cdot p'} \psi(p') \ind p', \quad \forall p \in A, \forall \psi \in  L^2(A)
\end{eqnarray*}
where $A$ is a symmetric ($x \in A$ if and only if $-x \in A$), bounded, open set. We summarize below a particular theory for a unit disk $B(0,1)$ and a general theory for a general symmetric set $A$. All the following results in this section are from \cite{Slepian64}.
\subsection{Generalized prolate spheroidal wave functions for unit disk} \label{section GPSWF disk}
When the set $A$ is a unit disk $B(0,1)$, there exists an orthogonal eigensystem $\{\psi_{m,n,\ell}(\cdot;c), \alpha_{m,n}(c)  \}_{m,n \in \mathbb{N}}^{\ell  \in \mathbbm{I}(m)}$    \cite[pp. 3015--3018]{Slepian64} that satisfies
\begin{eqnarray}\label{prop operator eigenfunction gpswf B01}
(\mathcal{F}_{B(0,1)}^c\psi_{m,n,\ell}(\cdot;c))(p) &=& \int_{B(0,1)} e^{i c p \cdot p' } \psi_{m,n,\ell}(p';c) \ind p' \nonumber \\
&=& \alpha_{m,n}(c)  \psi_{m,n,\ell}(p;c), \qquad p \in B(0,1)
\end{eqnarray}
with $\mathbb{N}=\{0,1,2,\cdots\}$ and
\begin{eqnarray*}
\mathbbm{I}(m) =
\left\{
\begin{array}{cc}
\{1\}  & m=0     \\
\{1,2\}  &     m\ge 1 
\end{array}
\right.  .
\end{eqnarray*}

\begin{itemize}
\item Each eigenfunction $\psi_{m,n,\ell}(\cdot;c)$ can be obtained via  separation of variables
\begin{eqnarray} \label{section GPSWF disk eqn psi and varphi}
\psi_{m,n,\ell}(\cdot;c)(x) = |x|^{-\frac{1}{2}}\varphi_{m,n}(|x|;c) Y_{m,\ell} (\theta_{x}), \quad x \in B(0,1), \quad \theta_{x} = \arg x,
\end{eqnarray}
where  
\begin{eqnarray*}
Y_{m,\ell} (\theta) =
\left\{
\begin{array}{ccc}
1  & m=0,\ell=1     \\
\cos(m\theta)  &  m\ge 1, \ell=1    \\
\sin(m\theta)  &     m\ge 1, \ell=2
\end{array}
\right.  
\end{eqnarray*}
and { $\{\varphi_{m,n}(\cdot;c), \gamma_{m,n}(c)  \}_{m,n \in \mathbb{N}}$} is the eigensystem that satisfies
\begin{eqnarray} \label{GPSWF def B01 eig kernel J}
\int_0^1 J_m(c r r') \sqrt{crr'} \varphi_{m,n}(r';c) \ind r' = \gamma_{m,n}(c)  \varphi_{m,n}(r;c), \quad 0<r<1,
\end{eqnarray}
here the eigenvalue $\gamma_{m,n}(c)$ is related to $\alpha_{m,n}(c)$ by  $\gamma_{m,n}(c)=\frac{c^{1/2}}{2\pi i^m}\alpha_{m,n}(c)$. The function $\varphi_{m,n}(\cdot;c)$ is chosen as real-valued and it is called a \textit{generalized prolate spheroidal wave function} according to Slepian.
\item Every eigenvalue $\alpha_{m,n}(c)$ is non-zero and
$$
|\alpha_{m,n}(c)| \to 0, \quad m,n\to \infty.
$$
 $\{\psi_{m,n,\ell}(\cdot;c) \}_{m,n \in \mathbb{N}}^{\ell  \in \mathbbm{I}(m)}$ is a  complete and orthogonal set in $L^2(B(0,1))$. 
\item By extending the domain of each $\psi_{m,n,\ell}(\cdot;c)$ to $\mathbb{R}^2$   naturally (and without the danger of confusion) via 
\begin{eqnarray} \label{prop operator eigenfunction gpswf B01 extension}
\psi_{m,n,\ell}(p;c) := \frac{1}{ \alpha_{m,n}(c)}\int_{B(0,1)} e^{i c p \cdot p' } \psi_{m,n,\ell}(p';c) \ind p' , \quad p \in \mathbb{R}^2,
\end{eqnarray}
one can get the double orthogonality \cite[pp. 3013--3015]{Slepian64}, for any $m,n\in \mathbb{N}, \ell  \in \mathbbm{I}(m)$,
\begin{eqnarray} 
&&\int_{B(0,1)} \psi_{m,n,\ell}(p;c) \psi_{m',n',\ell'}(p;c) \ind p \label{GPSWF def B01 psi double orthogonal eqn 1} \nonumber \\
&&=  \left(\frac{c}{2\pi} \right)^2  |\alpha_{m,n}(c)|^2   \int_{\mathbb{R}^2} \psi_{m,n,\ell}(p;c) \psi_{m',n',\ell'}(p;c) \ind p \label{GPSWF def B01 psi double orthogonal eqn 2}
\end{eqnarray}
 and each  $\psi_{m,n,\ell}(\cdot;c)$ is normalized such that its energy in $\mathbb{R}^2$ is unit or equivalently
\begin{eqnarray*}
\int_{B(0,1)}  \psi_{m,n,\ell}(p;c)\psi_{m',n',\ell'}(p;c)    \ind p &=& \left(\frac{c}{2\pi}\right)^2  |\alpha_{m,n}(c)|^2 \delta_{mm'}\delta_{nn'}\delta_{\ell \ell'},
\end{eqnarray*}
here $\delta$ denotes the Kronecker delta. For the purpose of a clean presentation later on, we use $\|\psi_{m,n,\ell}(\cdot;c)\|$ to represent the $\|\psi_{m,n,\ell}(\cdot;c)\|_{L^2(B(0,1))}$ norm of the eigenfunction $\psi_{m,n,\ell}(\cdot;c)$ (and similar notation holds for other doubly orthogonal eigenfunctions); when we discuss the $\|\psi_{m,n,\ell}(\cdot;c)\|_{L^2(\mathbb{R}^2)}$ norm, we shall state this explicitly.
\item
The remarkable property of the generalized prolate spheroidal wave function (eigenfunctions given by \eqref{GPSWF def B01 eig kernel J}) is that $\varphi_{m,n}(\cdot;c)$  is also the eigenfunction of   the  (singular) Sturm–Liouville differential operator 
\begin{eqnarray} 
&& \mathcal{D}_{c,r} \varphi_{m,n}(\cdot;c) = \chi^o_{m,n}(c), \quad \mbox{ where }  \label{GPSWF def B01 SLP Dcr eqn 1} \\
&& \mathcal{D}_{c,r}:=-(1-r^2) \frac{\ind^2 }{ \ind r^2} + 2r \frac{\ind   }{ \ind r} - \Big(  \frac{1/4-m^2}{r^2} -c^2  r^2 \Big), \label{GPSWF def B01 SLP Dcr eqn 2}
\end{eqnarray}
and $ \chi^o_{m,n}(c)$ is the corresponding  eigenvalue.
When $m=1/2$, this equation reduces to the equation for prolate spheroidal wave functions of order zero \cite{Slepian61}.
\end{itemize}

Remarkably, the (generalized) prolate spheroidal wave function  is the eigenfunction of both a Fourier type integral operator and a Sturm-Liouville differential operator, which merits important applications in analytic extrapolation, approximation theory, uncertainty quantification, and Fourier analysis \cite{Slepian64,Slepian61}.

\subsection{Eigensystem for symmetric set}  \label{section GPSWF symmetric}
Recall that $A \subset \mathbb{R}^2$ is a  symmetric  bounded open set.
Consider the eigenvalue problem of finding eigenfunctions $\psi_e(\cdot;c),\psi_o(\cdot;c) \in L^2(A)$ and  eigenvalues $\beta_e(c), \beta_o(c)$ such that
\begin{eqnarray*}
\beta_e(c) \psi_e(p;c) &=& \int_{A} \cos(c p \cdot p') \psi_e(p';c) \ind p',\quad p \in A, \\
\beta_o(c) \psi_o(p;c) &=& \int_{A} \sin(c p \cdot p') \psi_o(p';c) \ind p',\quad p \in A.
\end{eqnarray*}
Since $\cos$ is an even function and $A$ is symmetric, it follows from  \cite[pp. 3014--3015]{Slepian64} that there exists an eigensystem $\{\psi_{e,n}(\cdot;c), \beta_{e,n}(c)\}_{n=0}^\infty$ such that $\{\psi_{e,n}(\cdot;c)\}_{n=0}^\infty$ is complete in the set of even functions in $L^2(A)$, and  $\psi_{e,n}(\cdot;c)$ and  $\beta_{e,n}(c)$ are real-valued. Similarly  there exists an eigensystem $\{\psi_{o,n}(\cdot;c), \beta_{o,n}(c)\}_{n=0}^\infty$ such that $\{\psi_{o,n}(\cdot;c)\}_{n=0}^\infty$ is complete in the set of odd functions in $L^2(A)$, and  $\psi_{o,n}(\cdot;c)$ and  $\beta_{o,n}(c)$ are real-valued. The set $\{\psi_{o,n}(\cdot;c)\}_{n=0}^\infty \cup \{\psi_{e,n}(\cdot;c)\}_{n=0}^\infty$ is complete in $L^2(A)$.

In this way one can obtain the eigensystem  $\{\psi_{n}(\cdot;c), \alpha_{n}(c)\}_{n=0}^\infty$ := $\{\psi_{e,n}(\cdot;c), \beta_{e,n}(c)\}_{n=0}^\infty \cup \{\psi_{o,n}(\cdot;c), i\beta_{o,n}(c)\}_{n=0}^\infty$ that satisfies
\begin{eqnarray}\label{prop operator eigenfunction gpswf A}
\alpha_{n}(c) \psi_n(p;c) = \int_{A} e^{i c p \cdot p'} \psi_n(p';c) \ind p',\quad p \in A.
\end{eqnarray}
The eigenfunctions belong to $L^2(A)$ and are real-valued, orthogonal, and either even (in which case the eigenvalue is real) or odd (in which case the eigenvalue is purely imaginary). Every eigenvalue $\alpha_{n}(c)$ is non-zero due to an energy argument (using Fourier transform) \cite[pp. 3012]{Slepian64}.  Here in this general symmetric   case we have associated  a single subscript $n$ with the use of $\psi_n$  to avoid any danger of confusion.

By extending  the domain of   $\psi_{n}(\cdot;c)$ to $\mathbb{R}^2$  naturally via
\begin{eqnarray*}
\psi_n(p;c) :=\frac{1}{\alpha_{n}(c)} \int_{A} e^{i c p \cdot p'} \psi_n(p';c) \ind p',\quad p \in \mathbb{R}^2,  \quad \forall n\in \mathbb{N},
\end{eqnarray*}
one can find the double orthogonality \cite[p. 3013]{Slepian64}
 \begin{eqnarray} \label{prop operator eigenfunction gpswf A double orthogonality eqn 1}
 \int_{A}  \psi_m(p;c) \psi_n(p;c) \ind p = \left( \frac{c}{2\pi} \right)^2 |\alpha_{n}(c)|^2  \int_{\mathbb{R}^2}  \psi_m(p;c) \psi_n(p;c) \ind p, \quad \forall m,n\in \mathbb{N},
\end{eqnarray}
and we normalize each eigenfunction such that its energy in $\mathbb{R}^2$ is unit or equivalently
 \begin{eqnarray}\label{prop operator eigenfunction gpswf A double orthogonality eqn 2}
 \int_{A}  \psi_m(p;c) \psi_{m'}(p;c)  \ind p =  \left( \frac{c}{2\pi} \right)^2 |\alpha_m(c)|^2 \delta_{mm'}, \quad \forall m,m'\in \mathbb{N}.
\end{eqnarray}
\section{Picard Criterion and analytic extrapolation}\label{Picard Criterion and analytic extrapolation}
In this section, we study the Picard criterion for reconstructing the contrast from the  full aperture Born data and show that the reconstruction formula can be understood from the viewpoint of data processing and analytic extrapolation.

We begin with a formulation of the inverse problem. Recall in Section \ref{section introduction} that we aim to determine the contrast $q \in L^2(\Omega)$ from 
\begin{equation*}
\{{ u_{b}^{\infty}}(p;k): p \in B(0,2)\}
\end{equation*}
where ${ u_{b}^{\infty}}(p;k)$ is given by \eqref{FM near field case 2layer filtered data def 1 born volume eqv}, i.e,
\begin{eqnarray*}
{ u_{b}^{\infty}}(p;k)  = \int_\Omega e^{i k p\cdot p'} q(p')   \ind p', \quad \forall p \in B(0,2).
\end{eqnarray*}
Now we choose a positive constant $c_F:=c_F(\Omega,k)$ depending on $\Omega$ and $k$ such that  $\Omega \subset D_F$ with $D_F:=\{ \frac{c_F}{2k}x: x\in B(0,1)\}$. Let $  {  u^{\infty}_{b,F}} \in L^2(D_F)$ be given by 
\begin{eqnarray}  \label{section full aperture ubF}
{ u_{b,F}^{\infty}}(p) := \int_{D_F} e^{i \frac{4k^2}{c_F} p \cdot p'}  \underline{q}(p')   \ind p', \quad \forall p \in D_F,
\end{eqnarray}
and let $\underline{q}$ be the extension of $q$ 
\begin{equation}  \label{section full aperture underline q def}
\underline{q}(x):=\left\{
\begin{array}{cc}
q(x)  &  x\in\Omega    \\
0  &      x \not\in \Omega
\end{array}
\right., \quad a.e. \quad x \in \mathbb{R}^2.
\end{equation}
Then the knowledge of $\{{ u_{b}^{\infty}}(p;k): p \in B(0,2)\}$ amounts to the knowledge of $\{u_{b,F}^{\infty}(p): p \in  D_F\}$. Now the inverse problem is formulated as follows.

\textbf{Formulation of the inverse problem}: determine the contrast $\underline{q} \in  L^2(D_F)$ from $\{u_{b,F}^{\infty}(p): p \in  D_F\}$.

We next introduce a suitable eigensystem for the study of  \eqref{section full aperture ubF}. For the chosen positive constant $c_F$, recalling in Section \ref{section GPSWF disk} that $\{\psi_{m,n,\ell }(\cdot;c_F),\alpha_{m,n }(c_F)\}_{m,n \in \mathbb{N}}^{\ell  \in \mathbbm{I}(m)}$ is an eigensystem in $L^2(B(0,1))$, we introduce an orthogonal, complete  set $\{ \psi^F_{m,n,\ell } \}_{m,n \in \mathbb{N}}^{\ell  \in \mathbbm{I}(m)}$ in $L^2(D_F)$ by  
\begin{eqnarray}\label{section full aperture def gpswf psi^F}
 \psi^F_{m,n,\ell}(x;c_F) :=  \frac{2k}{c_F} \psi_{m,n,\ell}\left(\frac{2k}{c_F}x;c_F \right), \quad x \in \mathbb{R}^2.
\end{eqnarray}
It is then verified by a change of variable together with  \eqref{prop operator eigenfunction gpswf B01 extension}  that
\begin{eqnarray}\label{prop operator eigenfunction gpswf psi_c/2k}
 \int_{D_F} e^{i  \frac{4k^2}{c_F}  p \cdot p' } \psi^F_{m,n,\ell}(p' ;c_F) \ind p' =
  \left(\frac{c_F}{2k}\right)^2 \alpha_{m,n}(c_F) \psi^F_{m,n,\ell}(p;c_F), \quad p \in \mathbb{R}^2,
\end{eqnarray}
and by the double orthogonality from \eqref{GPSWF def B01 psi double orthogonal eqn 2} that
\begin{eqnarray}
\int_{\mathbb{R}^2} \psi^F_{m,n,\ell}(p;c_F) \psi^F_{m',n',\ell'}(p;c_F) \ind p &=& \delta_{mm'}\delta_{nn'}\delta_{\ell \ell'},\label{prop double orthogonality psi_c/2k 1}\\
\int_{D_F} \psi^F_{m,n,\ell}(p;c_F) \psi^F_{m',n',\ell'}(p;c_F) \ind p &=&   \left(\frac{c_F}{2\pi} \right)^2  |\alpha_{m,n}(c_F)|^2  \delta_{mm'}\delta_{nn'}\delta_{\ell \ell'}.\label{prop double orthogonality psi_c/2k 2}
\end{eqnarray}

We are now ready to prove the following theorem. Let $\langle \cdot,\cdot \rangle_{D}$ be the $L^2(D)$-inner product for a generic open bounded set $D$ and $\|\cdot\|_{D}$ be the induced norm. When there is no confusion, we drop the subscript for the best presentation.
\begin{theorem} \label{thm chi_q series expansion full aperture}
Let $c_F>0$ be chosen such that   $\Omega \subset D_F=B(0,\frac{c_F}{2k})$.  Let $\underline{q}$ be the extension \eqref{section full aperture underline q def} such that $\underline{q}=q$ in $\Omega$ and $\underline{q}=0$ outside $\Omega$ almost everywhere. Then  $\underline{q}$ is solved by the Picard criterion
\begin{eqnarray} \label{thm chi_q series expansion full aperture representation}
\underline{q}  = \sum_{m,n\in\mathbb{N}}^{\ell  \in \mathbbm{I}(m)}  \left(\frac{2k }{c_F} \right)^2 \frac{1}{  \alpha_{m,n}(c_F)   } \left\langle { u_{b,F}^{\infty}}, \frac{\psi^F_{m,n,\ell}(\cdot;c_F)}{\|\psi^F_{m,n,\ell}(\cdot;c_F)\|} \right\rangle_{D_F}  \frac{\psi^F_{m,n,\ell} (\cdot;c_F)}{\|\psi^F_{m,n,\ell}(\cdot;c_F)\|} 
\end{eqnarray}
where the convergence is in $L^2(D_F)$.
\end{theorem}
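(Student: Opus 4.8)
The plan is to read \eqref{section full aperture ubF} as a single operator equation $u_{b,F}^\infty=\mathcal{F}\,\underline{q}$ on $L^2(D_F)$, where $\mathcal{F}$ is the Fourier integral operator with kernel $e^{i\frac{4k^2}{c_F}p\cdot p'}$, and then to invert $\mathcal{F}$ by diagonalizing it against the complete orthogonal system $\{\psi^F_{m,n,\ell}(\cdot;c_F)\}$. The eigenrelation \eqref{prop operator eigenfunction gpswf psi_c/2k}, restricted from $\mathbb{R}^2$ to $p\in D_F$, exhibits each $\psi^F_{m,n,\ell}$ as an eigenfunction of $\mathcal{F}$ with eigenvalue $\big(\tfrac{c_F}{2k}\big)^2\alpha_{m,n}(c_F)$, and this eigenvalue is nonzero because every $\alpha_{m,n}(c_F)\ne0$ (Section \ref{section GPSWF disk}). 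Thus $\mathcal{F}$ acts diagonally on the generalized Fourier coefficients, and the reconstruction amounts to dividing the coefficients of the data by these eigenvalues.

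First I would record the orthonormal expansion of the unknown. By \eqref{prop double orthogonality psi_c/2k 2} the rescaled functions $\psi^F_{m,n,\ell}/\|\psi^F_{m,n,\ell}\|$ are orthonormal in $L^2(D_F)$, and they are complete there because the change of variables in \eqref{section full aperture def gpswf psi^F} maps $L^2(B(0,1))$ onto $L^2(D_F)$ and carries the complete system $\{\psi_{m,n,\ell}(\cdot;c_F)\}$ to $\{\psi^F_{m,n,\ell}\}$. Since $\underline{q}\in L^2(D_F)$, the generalized Fourier expansion gives
$$
\underline{q}=\sum_{m,n\in\mathbb{N}}^{\ell\in\mathbbm{I}(m)}\Big\langle\underline{q},\tfrac{\psi^F_{m,n,\ell}}{\|\psi^F_{m,n,\ell}\|}\Big\rangle_{D_F}\tfrac{\psi^F_{m,n,\ell}}{\|\psi^F_{m,n,\ell}\|},
$$
with convergence in $L^2(D_F)$; this already supplies the stated mode of convergence, so no separate summability argument is needed.

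The main step is to rewrite the unknown coefficient $\langle\underline{q},\psi^F_{m,n,\ell}\rangle_{D_F}$ in terms of the data. Here I would use that each $\psi^F_{m,n,\ell}$ is real-valued on $D_F$ (inherited from \eqref{section GPSWF disk eqn psi and varphi}), so that the $L^2(D_F)$ inner product against $\psi^F_{m,n,\ell}$ carries no complex conjugate. Pairing \eqref{section full aperture ubF} with $\psi^F_{m,n,\ell}$, interchanging the two integrations by Fubini (legitimate since $D_F$ is bounded, the kernel is unimodular, and $\underline{q},\psi^F_{m,n,\ell}\in L^2(D_F)$), and applying \eqref{prop operator eigenfunction gpswf psi_c/2k} after invoking the symmetry $p\cdot p'=p'\cdot p$ of the kernel, I obtain
$$
\langle u_{b,F}^\infty,\psi^F_{m,n,\ell}\rangle_{D_F}=\Big(\tfrac{c_F}{2k}\Big)^2\alpha_{m,n}(c_F)\,\langle\underline{q},\psi^F_{m,n,\ell}\rangle_{D_F}.
$$
Dividing by the nonzero eigenvalue yields $\langle\underline{q},\psi^F_{m,n,\ell}\rangle_{D_F}=\big(\tfrac{2k}{c_F}\big)^2\alpha_{m,n}(c_F)^{-1}\langle u_{b,F}^\infty,\psi^F_{m,n,\ell}\rangle_{D_F}$; substituting this into the expansion above and collecting the norm factors produces exactly \eqref{thm chi_q series expansion full aperture representation}.

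This is essentially a clean spectral inversion, so I do not expect a deep obstacle; the delicate points are bookkeeping ones. The principal one is ensuring that the bilinear eigenrelation \eqref{prop operator eigenfunction gpswf psi_c/2k} transfers correctly to the sesquilinear $L^2(D_F)$ inner product, which is precisely why the real-valuedness of $\psi^F_{m,n,\ell}$ on $D_F$ and the symmetry of the kernel must be made explicit. A secondary point, worth stating once for clarity, is that the classical Picard summability condition is automatically satisfied here: the solution $\underline{q}\in L^2(D_F)$ is assumed to exist, so $\sum_{m,n,\ell}\big(\tfrac{2k}{c_F}\big)^4|\alpha_{m,n}(c_F)|^{-2}|\langle u_{b,F}^\infty,\psi^F_{m,n,\ell}/\|\psi^F_{m,n,\ell}\|\rangle_{D_F}|^2=\|\underline{q}\|_{D_F}^2<\infty$, which is what guarantees the $L^2(D_F)$ convergence of \eqref{thm chi_q series expansion full aperture representation}.
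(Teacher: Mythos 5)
Your proof is correct and takes essentially the same route as the paper's: both arguments diagonalize the restricted Fourier integral operator on the complete orthogonal system $\{\psi^F_{m,n,\ell}(\cdot;c_F)\}$ via the eigenrelation \eqref{prop operator eigenfunction gpswf psi_c/2k} and then divide the data coefficients by the nonzero eigenvalues $\left(\frac{c_F}{2k}\right)^2\alpha_{m,n}(c_F)$. The only mechanical difference is directional: the paper expands $\underline{q}$ first and applies the operator term by term to match coefficients of the resulting series for $u_{b,F}^{\infty}$, whereas you project the data equation onto each eigenfunction using Fubini together with the kernel symmetry and real-valuedness of $\psi^F_{m,n,\ell}$ --- the same computation read in the dual direction.
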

\begin{proof}
Note that $\underline{q} \in L^2(D_F)$ and $\{ \psi^{F}_{m,n,\ell}(\cdot;c_F)\}_{m,n \in \mathbb{N}}^{\ell  \in \mathbbm{I}(m)}$ is complete in $L^2(D_F)$; then $\underline{q}$ can be represented by a convergent series in $L^2(D_F)$
\begin{eqnarray*}
\underline{q}(p) = \sum_{m,n\in\mathbb{N}}^{\ell  \in \mathbbm{I}(m)}  q_{m,n,\ell} \frac{\psi^F_{m,n,\ell}(p;c_F)}{\|\psi^F_{m,n,\ell}(\cdot;c_F)\|}, \quad p \in D_F.
\end{eqnarray*}
It suffices to determine $q_{m,n,\ell}$. Note that $\psi^F_{m,n,\ell}$ is an eigenfunction according to \eqref{prop operator eigenfunction gpswf psi_c/2k},
we have  from \eqref{section full aperture ubF} that 
\begin{eqnarray*} 
{ u_{b,F}^{\infty}}(p) = \int_{D_F} e^{i \frac{4k^2}{c_F} p \cdot p'}  \underline{q}(p')   \ind p' = \sum_{m,n\in\mathbb{N}}^{\ell  \in \mathbbm{I}(m)}  \left(\frac{c_F}{2k}\right)^2   \alpha_{m,n}(c_F)    q_{m,n,\ell} \frac{\psi^F_{m,n,\ell}(p;c_F)}{\|\psi^F_{m,n,\ell}(\cdot;c_F)\|},
\end{eqnarray*}
where the series converges in $L^2(D_F)$. From the full knowledge of ${ u_{b,F}^{\infty}}(p) $ in $L^2(D_F)$, we have that 
\begin{eqnarray*}
q_{m,n,\ell} &=&   \left(\frac{2k }{c_F} \right)^2 \frac{1}{  \alpha_{m,n}(c_F)   } \left\langle { u_{b,F}^{\infty}}, \frac{\psi^F_{m,n,\ell}(\cdot;c_F)}{\|\psi^F_{m,n,\ell}(\cdot;c_F)\|} \right\rangle_{D_F}.
\end{eqnarray*}
This completes the proof.
\end{proof}

{ It is useful to make a remark on $\Omega$ and $D_F$. The parameter $c_F$ is the product of the radius of the ball $B(0,2k)$ that supports the restricted Fourier transform and the radius of the ball $D_F$ that supports the contrast (i.e., $c_F=2k \frac{c_F}{2k}$). The performance of the reconstruction formula with noisy data may be observed by a stability estimate involving a delicate interplay of $D_F$, $c_F$, and several other parameters (cf. Theorem \ref{section SLP/regularization theorem qdelta-q estimate}). At the current stage, we have not reached an explicit conclusion on how the choice of $D_F$ explicitly determines the performance of reconstructing the original contrast $q$. 
There is  a similar piece of work on contrast reconstruction using a prolate-Galerkin linear sampling method, we refer the reader to \cite{meng23parameter} for related numerical experiments.
}

\subsection{Data processing and analytic extrapolation} \label{section Picard criterion analytic extrapolation}
 The  prolate spheroidal wave functions in one dimension  extend analytically to $\mathbb{R}$, are  doubly orthogonal, and are complete in the class of band-limited functions \cite{Slepian61}. Similarly the eigenfunctions $\{\psi^F_{m,n,\ell}(\cdot;c_F)\}_{m,n\in\mathbb{N}  }^{\ell  \in \mathbbm{I}(m)}$ also  extend analytically to $\mathbb{R}^2$   \eqref{prop operator eigenfunction gpswf psi_c/2k}, are  doubly orthogonal \eqref{prop double orthogonality psi_c/2k 1}--\eqref{prop double orthogonality psi_c/2k 2}, and are complete in the class of band-limited functions in multiple dimensions (which is indicated by \cite{Slepian64} with techniques in \cite{Slepian61}; we include a proof for completeness). This is the key to extrapolating the data from $D_F$ to the whole space $\mathbb{R}^2$. In this section we show how to interpret the reconstruction formula \eqref{thm chi_q series expansion full aperture representation} of Theorem \ref{thm chi_q series expansion full aperture}   from the viewpoint of data processing and analytic extrapolation.
 
The idea is to extrapolate ${ u_{b,F}^{\infty}}$ to the entire $\mathbb{R}^2$ so that  $\underline{q}$ can then be solved by classical inverse Fourier   transform. Let ${  \underline{u}_{b,F}^{\infty}}$ be an extrapolation of ${ u_{b,F}^{\infty}}$ such that ${  \underline{u}_{b,F}^{\infty}}={ u_{b,F}^{\infty}}$ in $L^2(D_F)$.  The natural analytic extrapolation is to require that  ${  \underline{u}_{b,F}^{\infty}}$ has the following form: 
\begin{eqnarray}  \label{section full aperture ubF extrapolation}
{  \underline{u}_{b,F}^{\infty}}(p) = \int_{D_F} e^{i \frac{4k^2}{c_F} p \cdot p'}  \underline{q}(p')   \ind p', \quad \forall p \in \mathbb{R}^2,
\end{eqnarray}
where we have extrapolated ${ u_{b,F}^{\infty}}$ analytically to $\mathbb{R}^2$ according to \eqref{section full aperture ubF}. 

We introduce here the notation of \textit{band-limited function} in the spirit of   \cite{Slepian64,Slepian61}: a function $u \in L^2(\mathbb{R}^2)$ is called a $D_F$ band-limited function if and only if
\begin{eqnarray}   \label{full aperture section extrapalolation band limited function def}
u(p) = \int_{D_F} e^{i \frac{4k^2}{c_F} p \cdot p'}  f(p')   \ind p', \quad \forall p \in \mathbb{R}^2,
\end{eqnarray}
for some $f \in L^2(D_F)$.
The family of band-limited functions enjoys the following property (which is indicated by \cite{Slepian64} with techniques in \cite{Slepian61}; we include a proof for completeness). Recall that $\{\psi^F_{m,n,\ell}(\cdot;c_F) \in L^2(\mathbb{R}^2)\}_{m,n\in\mathbb{N}  }^{\ell  \in \mathbbm{I}(m)}$ is given by \eqref{section full aperture def gpswf psi^F}.
\begin{prop}
$\{\psi^F_{m,n,\ell}(\cdot;c_F) \in L^2(\mathbb{R}^2)\}_{m,n\in\mathbb{N}}^{\ell  \in \mathbbm{I}(m)}$ is complete in the space of $D_F$ band-limited functions in $L^2(\mathbb{R}^2)$.
\end{prop}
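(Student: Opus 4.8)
The plan is to identify the space of $D_F$ band-limited functions with the range of the bounded operator $\mathcal{F}: L^2(D_F)\to L^2(\mathbb{R}^2)$ defined by $(\mathcal{F}f)(p) = \int_{D_F} e^{i\frac{4k^2}{c_F}p\cdot p'}f(p')\ind p'$, and then to transport the already-known completeness of $\{\psi^F_{m,n,\ell}(\cdot;c_F)\}$ in $L^2(D_F)$ through $\mathcal{F}$. First I would observe that $\mathcal{F}$ is, up to a positive multiplicative constant, an isometry: writing $a := \frac{4k^2}{c_F}$ and letting $\underline{f}$ denote the zero-extension of $f$ to $\mathbb{R}^2$, the function $\mathcal{F}f$ is a rescaled Fourier transform of $\underline{f}$, so Plancherel's theorem together with the change of variables $\xi = -ap$ gives $\|\mathcal{F}f\|_{L^2(\mathbb{R}^2)} = C_F\|f\|_{L^2(D_F)}$ for a positive constant $C_F$ (explicitly $\frac{2\pi}{a}$ in the unnormalized convention). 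In particular $\mathcal{F}$ is bounded with closed range, and by the definition \eqref{full aperture section extrapalolation band limited function def} that range is precisely the space of $D_F$ band-limited functions, which is therefore a closed subspace of $L^2(\mathbb{R}^2)$.

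Next, let $u$ be any $D_F$ band-limited function, say $u = \mathcal{F}f$ with $f\in L^2(D_F)$. Since $\{\psi^F_{m,n,\ell}(\cdot;c_F)\}$ is complete and orthogonal in $L^2(D_F)$ (as recorded around \eqref{section full aperture def gpswf psi^F}), I expand $f$ as a convergent series in $L^2(D_F)$, $f = \sum_{m,n\in\mathbb{N}}^{\ell\in\mathbbm{I}(m)}\langle f, \psi^F_{m,n,\ell}/\|\psi^F_{m,n,\ell}\|\rangle_{D_F}\,\psi^F_{m,n,\ell}/\|\psi^F_{m,n,\ell}\|$, and denote by $f_N$ its partial sums. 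Applying $\mathcal{F}$ to $f_N$ and using the eigenfunction/extension relation \eqref{prop operator eigenfunction gpswf psi_c/2k}, namely $\mathcal{F}\psi^F_{m,n,\ell} = (\frac{c_F}{2k})^2\alpha_{m,n}(c_F)\,\psi^F_{m,n,\ell}$ on all of $\mathbb{R}^2$, each $\mathcal{F}f_N$ is a finite linear combination of the extended eigenfunctions $\psi^F_{m,n,\ell}(\cdot;c_F)\in L^2(\mathbb{R}^2)$. By the boundedness of $\mathcal{F}$ established in the first step, $f_N\to f$ in $L^2(D_F)$ forces $\mathcal{F}f_N\to \mathcal{F}f = u$ in $L^2(\mathbb{R}^2)$. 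Thus $u$ lies in the $L^2(\mathbb{R}^2)$-closure of the span of $\{\psi^F_{m,n,\ell}(\cdot;c_F)\}$, which is the desired completeness.

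The only genuine obstacle is the first step, that is, verifying that $\mathcal{F}$ is continuous from $L^2(D_F)$ into $L^2(\mathbb{R}^2)$; once that is in hand, the remainder is a routine transport of an orthogonal basis through a bounded operator, with the eigenrelation doing all the bookkeeping. The crux is that the operator here is the \emph{unrestricted} Fourier transform evaluated on all of $\mathbb{R}^2$, not the restricted operator $\mathcal{F}_A^c$ of Section \ref{section GPSWF}, so Plancherel applies globally to $\underline{f}$ and supplies the needed norm identity, while the scaling factor $a=\frac{4k^2}{c_F}$ contributes only the harmless constant through the change of variables. For consistency I would also note that each $\psi^F_{m,n,\ell}(\cdot;c_F)$ is itself band-limited, being equal, up to the nonzero scalar $(\frac{c_F}{2k})^2\alpha_{m,n}(c_F)$, to $\mathcal{F}$ applied to its own restriction to $D_F$; hence the closed span of the eigenfunctions sits inside the band-limited subspace, and by the above the two coincide.
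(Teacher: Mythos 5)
Your proof is correct, and it shares the paper's skeleton: expand $f$ in the complete orthogonal system $\{\psi^F_{m,n,\ell}(\cdot;c_F)\}$ of $L^2(D_F)$ and push the expansion through the integral operator using the eigenrelation \eqref{prop operator eigenfunction gpswf psi_c/2k}. Where you genuinely diverge is in how the passage to the limit in $L^2(\mathbb{R}^2)$ is justified. The paper applies the operator termwise and then invokes the double orthogonality \eqref{prop double orthogonality psi_c/2k 1}--\eqref{prop double orthogonality psi_c/2k 2}: since the extended eigenfunctions are orthonormal in $L^2(\mathbb{R}^2)$ and the resulting coefficients are square-summable, the series for $u$ converges there; this has the side benefit of producing an explicit orthogonal expansion of $u$ in the extended eigenfunctions, which is the form reused later for the extrapolation formula \eqref{full aperture section extrapalolation ubF underline series representation}. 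You instead prove, via Plancherel and a change of variables, that $\mathcal{F}:L^2(D_F)\to L^2(\mathbb{R}^2)$ is a positive multiple of an isometry (your constant $2\pi/a=\pi c_F/(2k^2)$ is consistent with the paper's normalizations: applying it to $\psi^F_{m,n,\ell}$ restricted to $D_F$ and comparing with \eqref{prop operator eigenfunction gpswf psi_c/2k} reproduces the relation between the $L^2(D_F)$ norm $\frac{c_F}{2\pi}|\alpha_{m,n}(c_F)|$ and the unit $L^2(\mathbb{R}^2)$ norm), and then transport convergence of the partial sums through the bounded operator. This buys two things the paper does not state: the space of $D_F$ band-limited functions is a \emph{closed} subspace of $L^2(\mathbb{R}^2)$, and it coincides exactly with the closed span of the extended eigenfunctions (your final remark on the reverse inclusion); it also makes the argument independent of the double orthogonality. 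The trade-off is that your route, as written, does not directly exhibit the orthogonal series representation of $u$ that the paper exploits afterwards, though it follows immediately by combining your conclusion with \eqref{prop double orthogonality psi_c/2k 1}.
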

\begin{proof}
Suppose that $u \in L^2(\mathbb{R}^2)$ is   a $D_F$ band-limited function given by \eqref{full aperture section extrapalolation band limited function def} for some $f \in L^2(D_F)$. Note that $\{\psi^F_{m,n,\ell}(\cdot;c_F) \in L^2(D_F)\}_{m,n\in\mathbb{N}}^{\ell  \in \mathbbm{I}(m)}$  is complete in $L^2(D_F)$; then $f$ can be represented by a convergent series
\begin{eqnarray*}
f(p) = \sum_{m,n\in\mathbb{N}}^{\ell  \in \mathbbm{I}(m)}  f_{m,n,\ell} \frac{\psi^F_{m,n,\ell}(p;c_F)}{\|\psi^F_{m,n,\ell}(\cdot;c_F)\|}, \quad p \in D_F.
\end{eqnarray*}
This together with \eqref{full aperture section extrapalolation band limited function def} gives that, for all $p \in \mathbb{R}^2$
\begin{eqnarray*}  
u(p) = \sum_{m,n\in\mathbb{N}}^{\ell  \in \mathbbm{I}(m)}  f_{m,n,\ell}  \int_{D_F} e^{i \frac{4k^2}{c_F} p \cdot p'} \frac{\psi^F_{m,n,\ell}(p')}{\|\psi^F_{m,n,\ell}\|}  \ind p' = \sum_{m,n\in\mathbb{N}}^{\ell  \in \mathbbm{I}(m)}  f_{m,n,\ell}   \left(\frac{c_F}{2k}\right)^2 \alpha_{m,n}(c_F) \frac{\psi^F_{m,n,\ell}(p;c_F)}{\|\psi^F_{m,n,\ell}(\cdot;c_F)\|},
\end{eqnarray*}
where in the last step we applied \eqref{prop operator eigenfunction gpswf psi_c/2k}. This series is convergent in $L^2(\mathbb{R}^2)$ due to \eqref{prop double orthogonality psi_c/2k 1}--\eqref{prop double orthogonality psi_c/2k 2} and the fact that the series of $f$ converges. This proves the proposition.
\end{proof}

Following this notation, the extrapolation ${  \underline{u}_{b,F}^{\infty}}$ is the so-called $D_F$ band-limited function. The following proposition states that such a band-limited function can be fully determined by its value restricted in $D_F$.
\begin{prop}
The band-limited function ${  \underline{u}_{b,F}^{\infty}}$ \eqref{section full aperture ubF extrapolation} can be represented by
\begin{eqnarray} \label{full aperture section extrapalolation ubF underline series representation}
{  \underline{u}_{b,F}^{\infty}} (p) = \sum_{m,n\in\mathbb{N}}^{\ell  \in \mathbbm{I}(m)} \frac{1}{(\frac{c_F}{2\pi})^2  |\alpha_{m,n}(c_F)|^2  } \left\langle { u_{b,F}^{\infty}}, \psi^F_{m,n,\ell}(\cdot;c_F) \right\rangle_{D_F}   \psi^F_{m,n,\ell}(p;c_F), \quad p \in \mathbb{R}^2.
\end{eqnarray}
\end{prop}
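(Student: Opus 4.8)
The plan is to obtain the representation \eqref{full aperture section extrapalolation ubF underline series representation} by applying the band-limited extension operator directly to the series expansion of $\underline{q}$ furnished by Theorem \ref{thm chi_q series expansion full aperture}. Recall that by \eqref{section full aperture ubF extrapolation} the function ${\underline{u}_{b,F}^{\infty}}$ is nothing but the image of $\underline{q}$ under the map $f \mapsto \int_{D_F} e^{i\frac{4k^2}{c_F} p\cdot p'} f(p')\ind p'$ regarded as an operator into $L^2(\mathbb{R}^2)$. The idea is therefore to feed the $L^2(D_F)$-convergent series \eqref{thm chi_q series expansion full aperture representation} for $\underline{q}$ into this operator and invoke the eigenrelation \eqref{prop operator eigenfunction gpswf psi_c/2k} termwise.

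Concretely, first I would substitute $\underline{q} = \sum_{m,n,\ell} q_{m,n,\ell}\,\psi^F_{m,n,\ell}/\|\psi^F_{m,n,\ell}\|$, with $q_{m,n,\ell} = (\frac{2k}{c_F})^2 \alpha_{m,n}(c_F)^{-1}\langle {u_{b,F}^{\infty}}, \psi^F_{m,n,\ell}/\|\psi^F_{m,n,\ell}\|\rangle_{D_F}$, into \eqref{section full aperture ubF extrapolation}. Applying \eqref{prop operator eigenfunction gpswf psi_c/2k} to the basis function in each summand replaces $\int_{D_F} e^{i\frac{4k^2}{c_F} p\cdot p'}\psi^F_{m,n,\ell}(p';c_F)\ind p'$ by $(\frac{c_F}{2k})^2 \alpha_{m,n}(c_F)\,\psi^F_{m,n,\ell}(p;c_F)$ for all $p \in \mathbb{R}^2$. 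Two cancellations then occur: the constants $(\frac{2k}{c_F})^2$ and $(\frac{c_F}{2k})^2$ cancel, and $\alpha_{m,n}(c_F)^{-1}$ cancels $\alpha_{m,n}(c_F)$, leaving $\sum_{m,n,\ell} \|\psi^F_{m,n,\ell}\|^{-2}\langle {u_{b,F}^{\infty}}, \psi^F_{m,n,\ell}\rangle_{D_F}\,\psi^F_{m,n,\ell}(p;c_F)$. Finally, the normalization identity $\|\psi^F_{m,n,\ell}\|^2 = \|\psi^F_{m,n,\ell}\|_{D_F}^2 = (\frac{c_F}{2\pi})^2 |\alpha_{m,n}(c_F)|^2$, read off from \eqref{prop double orthogonality psi_c/2k 2}, turns this into exactly \eqref{full aperture section extrapalolation ubF underline series representation}.

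The one genuine point requiring care is the termwise application of the integral operator, i.e.\ the interchange of the infinite sum with the integral over $D_F$, and the sense in which the resulting series converges on all of $\mathbb{R}^2$. The clean way is to observe that the extension operator is bounded from $L^2(D_F)$ into $L^2(\mathbb{R}^2)$: on the orthogonal basis $\{\psi^F_{m,n,\ell}\}$ it acts as multiplication by $(\frac{c_F}{2k})^2 \alpha_{m,n}(c_F)$ while rescaling the norm from $\|\psi^F_{m,n,\ell}\|_{D_F}$ to $\|\psi^F_{m,n,\ell}\|_{L^2(\mathbb{R}^2)} = 1$ (by \eqref{prop double orthogonality psi_c/2k 1}--\eqref{prop double orthogonality psi_c/2k 2}), so the induced factor on the coefficients is bounded; hence the operator is continuous and may be passed through the $L^2(D_F)$-convergent series, yielding convergence of \eqref{full aperture section extrapalolation ubF underline series representation} in $L^2(\mathbb{R}^2)$. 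Alternatively, and more in the spirit of the preceding proposition, one can argue by uniqueness: both ${\underline{u}_{b,F}^{\infty}}$ and the right-hand side of \eqref{full aperture section extrapalolation ubF underline series representation} are $D_F$ band-limited, and their restrictions to $D_F$ coincide (the right-hand side restricted to $D_F$ is simply the generalized Fourier series of ${u_{b,F}^{\infty}}$ in the orthogonal basis $\{\psi^F_{m,n,\ell}\}$ of $L^2(D_F)$); since every eigenvalue $\alpha_{m,n}(c_F)$ is nonzero, the extension operator is injective on $L^2(D_F)$, so a $D_F$ band-limited function is determined by its restriction to $D_F$, forcing the two to agree on $\mathbb{R}^2$. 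I expect this justification of the interchange and convergence to be the only nontrivial step; the algebraic cancellations are routine.
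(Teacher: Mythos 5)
Your proof is correct, but it follows a genuinely different route from the paper's. The paper argues from the extension side: by the preceding completeness proposition, the band-limited function $\underline{u}_{b,F}^{\infty}$ admits a (unique) expansion $\sum_{m,n,\ell} a_{m,n,\ell}\,\psi^F_{m,n,\ell}(\cdot;c_F)$ in $L^2(\mathbb{R}^2)$; restricting to $D_F$, where $\underline{u}_{b,F}^{\infty}=u_{b,F}^{\infty}$, and projecting with the double orthogonality \eqref{prop double orthogonality psi_c/2k 1}--\eqref{prop double orthogonality psi_c/2k 2} identifies $a_{m,n,\ell}=(\tfrac{c_F}{2\pi})^{-2}|\alpha_{m,n}(c_F)|^{-2}\langle u_{b,F}^{\infty},\psi^F_{m,n,\ell}(\cdot;c_F)\rangle_{D_F}$, which is \eqref{full aperture section extrapalolation ubF underline series representation}. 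You argue from the contrast side: you push the Picard series of Theorem \ref{thm chi_q series expansion full aperture} for $\underline{q}$ through the extension integral \eqref{section full aperture ubF extrapolation}, apply the eigenrelation \eqref{prop operator eigenfunction gpswf psi_c/2k} termwise, and let the constants and eigenvalues cancel; your justification of the termwise step --- the extension operator is bounded from $L^2(D_F)$ into $L^2(\mathbb{R}^2)$, indeed a constant multiple of an isometry, since the factor $(\tfrac{c_F}{2k})^2|\alpha_{m,n}(c_F)|$ divided by $\|\psi^F_{m,n,\ell}(\cdot;c_F)\|_{D_F}=\tfrac{c_F}{2\pi}|\alpha_{m,n}(c_F)|$ is independent of $(m,n,\ell)$ --- is exactly right. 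What the paper's route buys is brevity: the completeness proposition has already done the analytic work, so only an algebraic projection remains (your termwise computation essentially reruns the proof of that proposition with $f=\underline{q}$ and the explicit Picard coefficients). What your route buys is that the $L^2(\mathbb{R}^2)$ convergence of \eqref{full aperture section extrapalolation ubF underline series representation} is made explicit, the representation is tied directly to Theorem \ref{thm chi_q series expansion full aperture} (which makes the subsequent consistency check via the inverse Fourier transform transparent), and your fallback uniqueness argument --- a $D_F$ band-limited function is determined by its restriction to $D_F$ because every eigenvalue $\alpha_{m,n}(c_F)$ is nonzero --- supplies precisely the uniqueness that the paper invokes implicitly with the words ``represented uniquely''.
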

\begin{proof}
Let the band-limited function ${  \underline{u}_{b,F}^{\infty}}$  be represented uniquely by an infinite series
\begin{eqnarray}\label{section full aperture ubF extrapolation series expansion}
{  \underline{u}_{b,F}^{\infty}} (p) = \sum_{m,n\in\mathbb{N}}^{\ell  \in \mathbbm{I}(m)}  a_{m,n,\ell} \psi^F_{m,n,\ell}(p;c_F), \quad p \in \mathbb{R}^2.
\end{eqnarray}
Projecting ${  \underline{u}_{b,F}^{\infty}}$ \eqref{section full aperture ubF extrapolation series expansion} onto $\psi^F_{m,n,\ell}$ in the disk $D_F$, one can solve for each coefficient $a_{m,n,\ell}$ by the double orthogonality \eqref{prop double orthogonality psi_c/2k 1}--\eqref{prop double orthogonality psi_c/2k 2}  to arrive at \eqref{full aperture section extrapalolation ubF underline series representation}, where we have kept this particular form as each $\psi^F_{m,n,\ell}$ is normalized to have unit norm in $L^2(\mathbb{R}^2)$.
\end{proof}

Now by applying inverse Fourier   transform to \eqref{section full aperture ubF extrapolation}, one can obtain from   \eqref{full aperture section extrapalolation ubF underline series representation} (where in the  second last step we shall apply \eqref{prop operator eigenfunction gpswf psi_c/2k FIT} in the following remark) that
\begin{eqnarray*}  
 \underline{q}(p)&  =& \frac{1}{(2\pi)^2}\int_{\mathbb{R}^2} e^{-i   p \cdot p'} {  \underline{u}_{b,F}^{\infty}}\left(p'\frac{c_F}{4k^2}\right)    \ind p' \nonumber \\
  &=& \frac{1}{(2\pi)^2}\int_{\mathbb{R}^2} e^{-i   p \cdot p'}      \sum_{m,n\in\mathbb{N}}^{\ell  \in \mathbbm{I}(m)} \frac{1}{(\frac{c_F}{2\pi})^2  |\alpha_{m,n}(c_F)|^2  } \langle { u_{b,F}^{\infty}}, \psi^F_{m,n,\ell}(\cdot;c_F) \rangle_{D_F}   \psi^F_{m,n,\ell}\left(p'\frac{c_F}{4k^2};c_F\right) \ind p' \qquad \nonumber \\
  &=& \frac{1}{(2\pi)^2}      \sum_{m,n\in\mathbb{N}}^{\ell  \in \mathbbm{I}(m)} \frac{1}{(\frac{c_F}{2\pi})^2  |\alpha_{m,n}(c_F)|^2  } \langle { u_{b,F}^{\infty}}, \psi^F_{m,n,\ell}(\cdot;c_F) \rangle_{D_F} \frac{1}{ \frac{1}{(2\pi)^2}  \big(\frac{c_F}{2k}\big)^2 \alpha_{m,n}(c_F)  }  \psi^F_{m,n,\ell}(p;c_F) \nonumber\\
  &=& \sum_{m,n\in\mathbb{N}}^{\ell  \in \mathbbm{I}(m)}  \left(\frac{2k }{c_F} \right)^2 \frac{1}{  \alpha_{m,n}(c_F)   } \left\langle { u_{b,F}^{\infty}}, \frac{\psi^F_{m,n,\ell}(\cdot;c_F)}{\|\psi^F_{m,n,\ell}(\cdot;c_F)\|} \right\rangle_{D_F}  \frac{\psi^F_{m,n,\ell}(p;c_F)}{\|\psi^F_{m,n,\ell}(\cdot;c_F)\|}, \quad p \in D_F. 
\end{eqnarray*}
This shows that we get the same result as in Theorem \ref{thm chi_q series expansion full aperture} by extrapolating the data first followed by  applying the inverse Fourier transform.
 \begin{remark}
 In the second last step, we applied
\begin{eqnarray}\label{prop operator eigenfunction gpswf psi_c/2k FIT}
\psi^F_{m,n,\ell}(p';c_F) = \frac{1}{(2\pi)^2}  \left(\frac{c_F}{2k}\right)^2 \alpha_{m,n}(c_F)  \int_{\mathbb{R}^2} e^{- i  p \cdot p' } \psi^F_{m,n,\ell}\left(p\frac{c_F}{4k^2};c_F\right)  \ind p,   \quad p' \in D_F,
\end{eqnarray}
for any $m,n\in \mathbb{N}, \ell  \in \mathbbm{I}(m)$.
This can be proved as follows: let $\mathbb{I}_{D_F}$ be the characteristic function that $\mathbb{I}_{D_F}=1$ in $D_F$ and $\mathbb{I}_{D_F}=0$ outside $D_F$.
Note that \eqref{prop operator eigenfunction gpswf psi_c/2k} gives
\begin{eqnarray*}
 \int_{\mathbb{R}^2 } e^{i  \frac{4k^2}{c_F}  p \cdot p' } \psi^F_{m,n,\ell}(p';c_F ) \mathbb{I}_{D_F}(p') \ind p' =
  \left(\frac{c_F}{2k}\right)^2 \alpha_{m,n}(c_F) \psi^F_{m,n,\ell}(p;c_F), \quad p \in \mathbb{R}^2.
\end{eqnarray*}
Taking the inverse Fourier transform of the above equation  yields  
$$
\psi^F_{m,n,\ell}(p' ;c_F) \mathbb{I}_{D_F}(p') = \frac{1}{(2\pi)^2}  \left(\frac{c_F}{2k}\right)^2 \alpha_{m,n}(c_F)  \int_{\mathbb{R}^2} e^{- i  p \cdot p' } \psi^F_{m,n,\ell}\left(p\frac{c_F}{4k^2};c_F\right)  \ind p,
$$
and this shows \eqref{prop operator eigenfunction gpswf psi_c/2k FIT}.
\end{remark}
 \begin{remark}
 Note that analytic extrapolation is ill-posed in general (cf. \cite{ChengPengYamamoto2005IP,LuXuXu2012AA}), and \eqref{full aperture section extrapalolation ubF underline series representation} indeed performs an analytic extrapolation. Note that $\psi^F_{m,n,\ell}$ is normalized to have unit norm in $L^2(\mathbb{R}^2)$ \eqref{prop double orthogonality psi_c/2k 1}, the ill-posedness of analytic extrapolation \eqref{full aperture section extrapalolation ubF underline series representation}  is revealed by the fact that the eigenvalue $\alpha_{m,n}(c_F)$ goes to zero as $m,n\to \infty$.
 \end{remark}

\section{Regularity, Approximation theory, Regularization and Stability} \label{section full aperture data regularity regularization stability}
In addition to the salient feature  that the data-driven basis $\{\varphi_{m,n,\ell}(\cdot;c)\}_{m,n \in \mathbb{N}}^{\ell  \in \mathbbm{I}(m)}$  extends analytically to $\mathbb{R}^2$, is  doubly orthogonal, and is complete in the class of band-limited functions,   another feature is that $\{\varphi_{m,n,\ell}(\cdot;c)\}_{m,n \in \mathbb{N}}^{\ell  \in \mathbbm{I}(m)}$ is also a basis for a Sturm-Liouville differential operator. This Sturm-Liouville differential operator brings additional regularity estimates that lead to an explicit stability estimate for a spectral cutoff regularization strategy. In this section, we study the relevant Sturm-Liouville theory in Section \ref{section SLP} and regularity estimates and approximation theory in Section \ref{section approximation in L2}, followed by a spectral cutoff regularization and its stability estimate in Section \ref{section full aperture regularization stabiltiy}.
\subsection{Sturm-Liouville theory} \label{section SLP}
Recall that the eigenfunction $\psi_{m,n,\ell}(\cdot;c)$, $m,n \in \mathbb{N},\ell  \in \mathbbm{I}(m)$, for the disk $B(0,1)$ is given by \eqref{section GPSWF disk eqn psi and varphi}, and  Slepian \cite{Slepian64} showed that  the radial part $\varphi_{m,n}(r;c)$  is also the eigenfunction of   the  following (singular) Sturm–Liouville differential operator \eqref{GPSWF def B01 SLP Dcr eqn 1}--\eqref{GPSWF def B01 SLP Dcr eqn 2}, i.e.,
\begin{eqnarray*} 
&& \mathcal{D}_{c,r} \varphi_{m,n}(\cdot;c) = \chi^o_{m,n}(c), \quad \mbox{ where }  \\
&& \mathcal{D}_{c,r}=-(1-r^2) \frac{\ind^2 }{ \ind r^2} + 2r \frac{\ind   }{ \ind r} - \Big(  \frac{1/4-m^2}{r^2} -c^2  r^2 \Big), 
\end{eqnarray*}
and $ \chi^o_{m,n}(c)$ is the corresponding  eigenvalue.
This is a Sturm-Liouville problem in the radial variable $r$. 

Now we have the following Sturm-Liouville problem in the variable $x=(r \cos \theta, r \sin \theta)^T$ where $T$ denotes the transpose.
Let the Sturm-Liouville differential operator  $ \mathcal{D}_{c,x}$  (see also \cite{ZLWZ20})  be given by
\begin{equation*}
 \mathcal{D}_{c,x}:= -(1-r ^2) \partial^2_r + (3r-\frac{1}{r}) \partial_r -\frac{1}{r^2} \partial^2_\theta  +  c^2r^2.
\end{equation*}
\begin{lemma}
Let the eigenfunction $\psi_{m,n,\ell}(\cdot;c)$ be given by \eqref{section GPSWF disk eqn psi and varphi} for all $m,n \in \mathbb{N},\ell  \in \mathbbm{I}(m)$. Then each 
$\psi_{m,n,\ell}(\cdot;c)$ is also the eigenfunction to the following Sturm-Liouville problem:
\begin{eqnarray*}
 \mathcal{D}_{c,x} \psi_{m,n,\ell}(\cdot;c) = \chi_{m,n} (c) \psi_{m,n,\ell}(\cdot;c)  \mbox{ in } B(0,1),
\end{eqnarray*}
where we define the corresponding eigenvalue by $\chi_{m,n}(c)$.
\end{lemma}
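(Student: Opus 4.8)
The plan is to prove the lemma by a direct substitution that reduces the two-dimensional operator $\mathcal{D}_{c,x}$ to the known radial equation \eqref{GPSWF def B01 SLP Dcr eqn 1}--\eqref{GPSWF def B01 SLP Dcr eqn 2}. Writing $x=(r\cos\theta,r\sin\theta)^T$ and using the separation of variables \eqref{section GPSWF disk eqn psi and varphi}, set $\psi_{m,n,\ell}(x;c)=f(r)Y_{m,\ell}(\theta)$ with $f(r):=r^{-\frac12}\varphi_{m,n}(r;c)$. The first observation is that each admissible $Y_{m,\ell}$ (namely $1$, $\cos(m\theta)$, or $\sin(m\theta)$) satisfies $\partial_\theta^2 Y_{m,\ell}=-m^2 Y_{m,\ell}$ uniformly in $\ell$ and across the cases $m=0$ and $m\ge1$. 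Hence the angular term contributes $-\tfrac{1}{r^2}\partial_\theta^2\psi_{m,n,\ell}=\tfrac{m^2}{r^2}\psi_{m,n,\ell}$, which is precisely what makes the same $m^2$ resurface as in the radial operator $\mathcal{D}_{c,r}$.

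Next I would compute the radial action. From $\partial_r f=-\tfrac12 r^{-3/2}\varphi_{m,n}+r^{-1/2}\varphi_{m,n}'$ and $\partial_r^2 f=\tfrac34 r^{-5/2}\varphi_{m,n}-r^{-3/2}\varphi_{m,n}'+r^{-1/2}\varphi_{m,n}''$, I substitute into $-(1-r^2)\partial_r^2+(3r-\tfrac1r)\partial_r$ and collect terms by the order of derivative of $\varphi_{m,n}$. After factoring out $r^{-1/2}$, the second- and first-order contributions collapse to exactly $-(1-r^2)\varphi_{m,n}''$ and $2r\varphi_{m,n}'$, while the zeroth-order terms combine to $-\tfrac14 r^{-2}\varphi_{m,n}-\tfrac34\varphi_{m,n}$. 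Adding the angular contribution $\tfrac{m^2}{r^2}$ and the potential $c^2r^2$, and noting $-\tfrac14 r^{-2}+m^2 r^{-2}=-\tfrac{1/4-m^2}{r^2}$, the bracket multiplying $r^{-1/2}Y_{m,\ell}$ becomes
\[
-(1-r^2)\varphi_{m,n}''+2r\varphi_{m,n}'-\frac{1/4-m^2}{r^2}\varphi_{m,n}+c^2r^2\varphi_{m,n}-\frac34\varphi_{m,n},
\]
whose first four terms are exactly $\mathcal{D}_{c,r}\varphi_{m,n}(\cdot;c)$.

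Finally I invoke \eqref{GPSWF def B01 SLP Dcr eqn 1} to replace $\mathcal{D}_{c,r}\varphi_{m,n}(\cdot;c)$ by $\chi^o_{m,n}(c)\varphi_{m,n}(\cdot;c)$, so the bracket equals $\bigl(\chi^o_{m,n}(c)-\tfrac34\bigr)\varphi_{m,n}(\cdot;c)$ and therefore $\mathcal{D}_{c,x}\psi_{m,n,\ell}(\cdot;c)=\bigl(\chi^o_{m,n}(c)-\tfrac34\bigr)\psi_{m,n,\ell}(\cdot;c)$ in $B(0,1)$, identifying the eigenvalue as $\chi_{m,n}(c)=\chi^o_{m,n}(c)-\tfrac34$, independent of $\ell$ and consistent with the notation in the statement. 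The only real obstacle is bookkeeping: one must track the half-integer powers $r^{-1/2},r^{-3/2},r^{-5/2}$ carefully so that the $-\tfrac14 r^{-2}$ term merges with $+\tfrac{m^2}{r^2}$ to reproduce the exact singular coefficient $-(1/4-m^2)/r^2$ of $\mathcal{D}_{c,r}$, and so that the constant shift $-\tfrac34$ is captured. No conceptual difficulty arises beyond this algebra, since the angular reduction and the recognition of the radial operator do all the structural work.
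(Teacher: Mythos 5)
Your proposal is correct and follows essentially the same route as the paper's proof: direct substitution of the separated form $r^{-1/2}\varphi_{m,n}(r;c)Y_{m,\ell}(\theta)$ into $\mathcal{D}_{c,x}$, reduction of the radial part to $\mathcal{D}_{c,r}$ with the constant shift $-\tfrac34$, and invocation of the radial eigenvalue equation. The paper arrives at exactly the identity $\mathcal{D}_{c,x}\psi_{m,n,\ell}(\cdot;c)=r^{-1/2}Y_{m,\ell}\bigl(\mathcal{D}_{c,r}-\tfrac34\bigr)\varphi_{m,n}(\cdot;c)$, so your explicit identification $\chi_{m,n}(c)=\chi^o_{m,n}(c)-\tfrac34$ is just a slightly more detailed statement of the same conclusion.
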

\begin{proof}
Note that $\psi_{m,n,\ell}(\cdot;c)$ and $\varphi_{m,n}(\cdot;c)$ are related by \eqref{section GPSWF disk eqn psi and varphi}  for all $m,n \in \mathbb{N},\ell  \in \mathbbm{I}(m)$; then we can directly calculate
\begin{eqnarray*}
&& \mathcal{D}_{c,x} \psi_{m,n,\ell}(\cdot;c) \\
&=&   [-(1-r ^2) \partial^2_r + (3r-\frac{1}{r}) \partial_r -\frac{1}{r^2} \partial^2_\theta  +  c^2r^2] [r^{-\frac{1}{2}}\varphi_{m,n}(\cdot;c)(r) Y_{m,\ell} (\theta)] \\
&=& -r^{-\frac{1}{2}} Y_{m,\ell} \Big[  (1-r^2) \frac{\ind^2 \varphi_{m,n}(\cdot;c)}{ \ind r^2} - 2r \frac{\ind  \varphi_{m,n}(\cdot;c) }{ \ind r} + \Big(  \frac{1/4-m^2}{r^2} -c^2  r^2 +  \frac{3}{4} \Big) \varphi_{m,n}(\cdot;c) \Big] \\
&=& r^{-\frac{1}{2}} Y_{m,\ell} \left(\mathcal{D}_{c,r} - \frac{3}{4}\right)\varphi_{m,n}(\cdot;c).
\end{eqnarray*}
Since $\varphi_{m,n}(\cdot;c)$ is an eigenfunction of $\mathcal{D}_{c,r}$, then $\psi_{m,n,\ell}(\cdot;c)$ is an eigenfunction of $\mathcal{D}_{c,x}$.
This completes the proof.
\end{proof}
The operator $ \mathcal{D}_{c,x}$ is the natural multi-dimensional generalization of the Sturm-Liouville problem in one dimension considered by Slepian \cite{Slepian64}. 
The operator  $ \mathcal{D}_{c,x}$ is self-adjoint (see also \cite{ZLWZ20}) in the sense that for any $u,v$ in $H^1(B(0,1))$, 
\begin{eqnarray*}
(  \mathcal{D}_{c,x}u,v )_{B(0,1)} &=& \langle \nabla u, \nabla v \rangle_{L^2(B(0,1);w)} +   \langle (x_2 \partial_{x_1} - x_1 \partial_{x_2}) u, (x_2 \partial_{x_1} - x_1 \partial_{x_2}) v \rangle_{L^2(B(0,1))} \\
&&+ c^2 \langle x u,xv\rangle_{L^2(B(0,1))}{ ,}
\end{eqnarray*}
where $(\cdot,\cdot)_{B(0,1)}$ denotes the duality paring, 
and $L^2(B(0,1);w)$ is associated with the $w(x):=1-|x|^2$ weighted norm and inner product
$$
\|u\|_{L^2(B(0,1);w)} := \sqrt{ \langle u,u\rangle_{L^2(B(0,1);w)} }, \quad \langle u,v\rangle_{L^2(B(0,1);w)}:=\int_{B(0,1)} u(x) \overline{v}(x) w(x) \ind x.
$$
The operator  $ \mathcal{D}_{c,x}$ is strictly positive, since for any nonzero $u \in H^1(B(0,1))$,
\begin{eqnarray*}
( \mathcal{D}_{c,x}u,u )_{B(0,1)} = \|\nabla u\|^2_{L^2(B(0,1);w)} +   \| (x_2 \partial_{x_1} - x_1 \partial_{x_2}) u\|^2_{L^2(B(0,1))} + c^2 \|x u\|^2_{L^2(B(0,1))}>0.
\end{eqnarray*}
 
The eigensystem  $\{\psi_{m,n,\ell}(\cdot;c), \chi_{m,n}(c) \}_{m,n \in \mathbb{N}}^{\ell  \in \mathbbm{I}(m)}$ satisfies the following important property (cf. \cite[Theorem 3.2]{ZLWZ20}).
\begin{lemma} \label{Lemma chimn asymptotic}
For any $c>0$, it holds that
$\{\psi_{m,n,\ell}(\cdot;c)\}_{m,n \in \mathbb{N}}^{\ell  \in \mathbbm{I}(m)}$ is a complete orthogonal basis in $L^2(B(0,1))$ and
\begin{eqnarray*}
(m+2n)(m+2n+2) { <} \chi_{m,n}(c) <  (m+2n)(m+2n+2)+c^2.
\end{eqnarray*}
\end{lemma}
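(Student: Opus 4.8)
The plan is to treat the two assertions separately. The orthogonality and completeness of $\{\psi_{m,n,\ell}(\cdot;c)\}_{m,n\in\mathbb{N}}^{\ell\in\mathbbm{I}(m)}$ in $L^2(B(0,1))$ have already been recorded in Section~\ref{section GPSWF disk}, where these functions arise as the eigenfunctions of the integral operator $\mathcal{F}_{B(0,1)}^c$; since the preceding lemma shows they are simultaneously eigenfunctions of $\mathcal{D}_{c,x}$, the same family serves as the orthogonal eigenbasis of $\mathcal{D}_{c,x}$. Thus the substance of the lemma is the two-sided bound on $\chi_{m,n}(c)$. The starting point is the identity $\mathcal{D}_{c,x}=\mathcal{D}_{0,x}+c^2|x|^2$, read off directly from the explicit form of $\mathcal{D}_{c,x}$: the operator depends on $c$ only through the additive multiplication term $c^2 r^2=c^2|x|^2$, which on the open disk satisfies $0<c^2|x|^2<c^2$ almost everywhere.

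First I would compute the unperturbed eigenvalues $\chi_{m,n}(0)$ exactly. Using the radial reduction already established in the proof of the previous lemma (namely $\mathcal{D}_{c,x}\psi_{m,n,\ell}=r^{-1/2}Y_{m,\ell}(\mathcal{D}_{c,r}-\tfrac34)\varphi_{m,n}$, so that $\chi_{m,n}(c)=\chi^o_{m,n}(c)-\tfrac34$), the problem reduces to the radial equation $\mathcal{D}_{0,r}\varphi_{m,n}=\chi^o_{m,n}(0)\varphi_{m,n}$. Setting $\varphi_{m,n}(r)=r^{1/2}f(r)$, then changing variables to $t=r^2$ and substituting $f(t)=t^{m/2}g(t)$, I expect the radial equation to collapse to the hypergeometric form $t(1-t)g''+[(m+1)-(m+2)t]g'+\lambda g=0$ with $\lambda=\tfrac14\big(\chi_{m,n}(0)-m(m+2)\big)$. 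The admissibility requirements --- the regular Frobenius solution at the singular point $t=0$ (forced by integrability near the origin) together with boundedness at the singular endpoint $t=1$ --- select precisely the terminating Jacobi-polynomial solutions $g\propto P_n^{(m,0)}(1-2t)$, giving the discrete spectrum $\lambda=n(n+m+1)$ and hence $\chi_{m,n}(0)=m(m+2)+4n(n+m+1)=(m+2n)(m+2n+2)$.

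Finally I would promote the exact value at $c=0$ to the claimed strict bounds at general $c>0$ via the Courant--Fischer min-max principle. Because the angular factor $Y_{m,\ell}$ is independent of $c$, both $\mathcal{D}_{0,x}$ and $\mathcal{D}_{c,x}$ leave each angular sector invariant, so within a fixed sector $\chi_{m,n}(c)$ is the $(n{+}1)$-st min-max value and is indexed consistently by $n$. For the upper bound I would use the $(n{+}1)$-dimensional trial space spanned by the $c=0$ eigenfunctions $\psi_{m,0,\ell}(\cdot;0),\dots,\psi_{m,n,\ell}(\cdot;0)$: on this space $(\mathcal{D}_{0,x}\psi,\psi)\le\chi_{m,n}(0)\|\psi\|^2$ while $c^2\|x\psi\|^2<c^2\|\psi\|^2$ strictly, since $\int_{B(0,1)}(1-|x|^2)|\psi|^2\,\ind x>0$ for every nonzero $\psi$. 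For the lower bound I would run the same comparison on the optimal trial space for $\mathcal{D}_{c,x}$, bounding $\chi_{m,n}(0)\le(\mathcal{D}_{0,x}\psi^{**},\psi^{**})/\|\psi^{**}\|^2=[(\mathcal{D}_{c,x}\psi^{**},\psi^{**})-c^2\|x\psi^{**}\|^2]/\|\psi^{**}\|^2<\chi_{m,n}(c)$ for the extremizer $\psi^{**}$, using $c^2\|x\psi^{**}\|^2>0$. Each comparison yields a strict inequality because the extremizing function is a fixed nonzero element of a finite-dimensional space, so the strict pointwise bounds on $c^2|x|^2$ survive the maximization, giving $(m+2n)(m+2n+2)<\chi_{m,n}(c)<(m+2n)(m+2n+2)+c^2$.

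I expect the main obstacle to be the explicit evaluation of $\chi_{m,n}(0)$: identifying the correct pair of substitutions ($t=r^2$ followed by $f=t^{m/2}g$) that simultaneously removes the $r^{-1/2}$ normalization and the $m^2/r^2$ angular-momentum singularity and lands on a recognizable Jacobi/hypergeometric operator, and then justifying --- through the singular Sturm--Liouville endpoint analysis at $t=0$ and $t=1$ --- that only the polynomial solutions are admissible, so that the spectrum is exactly $\{n(n+m+1)\}_{n\in\mathbb{N}}$. By contrast the min-max step is comparatively routine once the angular block-diagonalization is noted, the only subtlety being the bookkeeping that turns the strict pointwise inequalities $0<|x|^2<1$ into strict inequalities for the eigenvalues. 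Alternatively, one may simply invoke \cite[Theorem 3.2]{ZLWZ20} for the bounds, but the argument above keeps the derivation self-contained.
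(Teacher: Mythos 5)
Your proposal is mathematically sound, but it takes a genuinely different route from the paper, because the paper offers no proof at all: Lemma~\ref{Lemma chimn asymptotic} is stated with the pointer ``(cf.\ \cite[Theorem 3.2]{ZLWZ20})'' and the two-sided bound is simply imported from that reference. You instead reconstruct a self-contained argument, and its key steps check out. The decomposition $\mathcal{D}_{c,x}=\mathcal{D}_{0,x}+c^2|x|^2$ is read off correctly from \eqref{GPSWF def B01 SLP Dcr eqn 2} and the definition of $\mathcal{D}_{c,x}$; I verified your hypergeometric reduction (with $\varphi=r^{m+1/2}g(r^2)$ and $t=r^2$ the radial equation at $c=0$ becomes $t(1-t)g''+[(m+1)-(m+2)t]g'+\lambda g=0$, $\lambda=\tfrac14(\chi-m(m+2))$, whose polynomial solutions are $P_n^{(m,0)}(1-2t)$ with $\lambda=n(n+m+1)$, giving exactly $\chi_{m,n}(0)=(m+2n)(m+2n+2)$); and your strictness bookkeeping in the min-max comparison is right, since the extremizers over finite-dimensional trial spaces are attained and $0<\int_{B(0,1)}|x|^2|\psi|^2\,\ind x<\int_{B(0,1)}|\psi|^2\,\ind x$ for every nonzero $\psi$. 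What the paper's citation buys is brevity, and it silently absorbs two technical points that your sketch only flags: (i) the endpoint analysis selecting the Jacobi solutions --- note that for $m=0$ plain $L^2$-integrability does \emph{not} exclude the logarithmic second solutions at $t=0$ and $t=1$; you need the form-domain (Friedrichs) argument, e.g.\ the weighted energy $\int(1-t)|g'|^2\,\ind t$ diverges for the log branch at $t=1$; and (ii) the indexing consistency, i.e.\ that Slepian's label $n$ orders the eigenvalues within each angular sector increasingly, so that $\chi_{m,n}(c)$ really is the $(n{+}1)$-st min-max value for every $c\ge 0$. Point (ii) is genuinely used in your argument and only asserted, but the paper's citation rests on exactly the same identification of Slepian's eigensystem with that of \cite{ZLWZ20}, so it is a shared gloss rather than an error on your part. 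What your route buys in exchange is transparency: it exhibits the bound as ``Zernike spectrum plus a perturbation trapped strictly between $0$ and $c^2$,'' which the bare citation hides, and it keeps the paper self-contained in the sense you intended.
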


Now we  introduce the Sobolev space for any integer $s>0$ (see \cite{wang10} for the one-dimensional case)
\begin{eqnarray*}
\widetilde{H}^s_c(B(0,1)) := \Bigg\{ u \in L^2(B(0,1)):  \sum_{m,n \in \mathbb{N}}^{\ell  \in \mathbbm{I}(m)}  \chi^s_{m,n}(c)  \left| \left\langle u, \frac{\psi_{m,n,\ell}(\cdot;c)}{\|\psi_{m,n,\ell}(\cdot;c)\|} \right\rangle \right|^2 < \infty     \Bigg\},  
\end{eqnarray*}
equipped with the following   energy norm in $\widetilde{H}^s_c(B(0,1))$, $\forall s=1,2,\cdots$ (cf. \cite[Theorem 2.37 and Corrollary 2.38]{mclean00}):
\begin{eqnarray*}
\|u\|^2_{\widetilde{H}^s_c(B(0,1))} :=  \sum_{m,n \in \mathbb{N}}^{\ell  \in \mathbbm{I}(m)}  \chi^s_{m,n}(c)  \left| \left\langle u, \frac{\psi_{m,n,\ell}(\cdot;c)}{\|\psi_{m,n,\ell}(\cdot;c)\|} \right\rangle \right|^2 = \int_{B(0,1)} \big(D^s_{c,x}u\big) \overline{u} \ind x.
\end{eqnarray*}
Now { we define} the Sobolev space $\widetilde{H}^s_c(B(0,1))$ for any real $s>0$ by interpolation as in \cite[Exercise B.8, p.~333 ]{mclean00}, and the norm is characterized by  
\begin{eqnarray*}
\|u\|^2_{\widetilde{H}^s_c(B(0,1))} =  \sum_{m,n \in \mathbb{N}}^{\ell  \in \mathbbm{I}(m)}  \chi^s_{m,n}(c)  \left| \left\langle u, \frac{\psi_{m,n,\ell}(\cdot;c)}{\|\psi_{m,n,\ell}(\cdot;c)\|} \right\rangle \right|^2, \quad \forall s>0.
\end{eqnarray*}
\subsection{Regularity estimate and approximation theory} \label{section approximation in L2}
We now estimate the $L^2(B(0,1))$ approximation error for a spectral cutoff approximation of functions in  $H^s(B(0,1))$, $0<s\le 1$. By Lemma \ref{Lemma chimn asymptotic} the eigenvalue $\chi_{m,n}(c)$ approaches to infinity as $m,n\to \infty$; then for a given small value $\alpha>0$, we can define a spectral cutoff approximation for any $u \in L^2(B(0,1))$:
 \begin{eqnarray} \label{section SLP projection u def}
\pi_{\alpha,c} u := \sum_{m,n \in \mathbb{N},\ell  \in \mathbbm{I}(m)}^{\chi_{m,n}(c)<\alpha^{-1}}  \left\langle u, \frac{\psi_{m,n,\ell}(\cdot;c)}{\|\psi_{m,n,\ell}(\cdot;c)\|} \right\rangle  \frac{\psi_{m,n,\ell}(\cdot;c)}{\|\psi_{m,n,\ell}(\cdot;c)\|} .
\end{eqnarray}
We begin with the result on approximations of any function $u \in \widetilde{H}^s_c(B(0,1))$.
\begin{lemma} \label{section SLP lemma u tildeH^1 projection error}
Suppose that $u \in \widetilde{H}^s_c(B(0,1))$ for $s\ge0$. Then 
\begin{eqnarray*}
\|  \pi_{\alpha,c} u - u \| \le \alpha^{s/2} \|u\|_{\widetilde{H}^s_c(B(0,1))}
\end{eqnarray*}
\end{lemma}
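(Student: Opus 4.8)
The plan is to exploit the fact, recorded in Lemma \ref{Lemma chimn asymptotic}, that $\{\psi_{m,n,\ell}(\cdot;c)\}_{m,n\in\N}^{\ell\in\mathbbm{I}(m)}$ is a complete orthogonal basis of $L^2(B(0,1))$, together with the spectral characterization of the $\widetilde{H}^s_c(B(0,1))$ norm. First I would normalize by setting $\widehat{\psi}_{m,n,\ell}:=\psi_{m,n,\ell}(\cdot;c)/\|\psi_{m,n,\ell}(\cdot;c)\|$, so that $\{\widehat{\psi}_{m,n,\ell}\}$ is orthonormal in $L^2(B(0,1))$, and expand $u$ in this basis. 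By the definition \eqref{section SLP projection u def} of $\pi_{\alpha,c}$, the spectral cutoff retains exactly the modes with $\chi_{m,n}(c)<\alpha^{-1}$, so the approximation error $u-\pi_{\alpha,c}u$ is the orthogonal tail built from the modes with $\chi_{m,n}(c)\ge\alpha^{-1}$. By Parseval's identity the squared $L^2$ norm of this tail equals $\sum_{\chi_{m,n}(c)\ge\alpha^{-1}}|\langle u,\widehat{\psi}_{m,n,\ell}\rangle|^2$.

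The key step is a one-line weighting estimate applied term by term. On the index set where $\chi_{m,n}(c)\ge\alpha^{-1}$ we have $\alpha\,\chi_{m,n}(c)\ge1$, and since $s\ge0$ this yields $1\le\alpha^s\chi^s_{m,n}(c)$; here I use that every eigenvalue is strictly positive, which is guaranteed by the strict lower bound in Lemma \ref{Lemma chimn asymptotic} and by the strict positivity of $\mathcal{D}_{c,x}$. Multiplying each $|\langle u,\widehat{\psi}_{m,n,\ell}\rangle|^2$ by the factor $\alpha^s\chi^s_{m,n}(c)\ge1$ and summing over the tail gives
\[
\|\pi_{\alpha,c}u-u\|^2 \le \alpha^s \sum_{\chi_{m,n}(c)\ge\alpha^{-1}} \chi^s_{m,n}(c)\,\big|\langle u,\widehat{\psi}_{m,n,\ell}\rangle\big|^2 .
\]
Enlarging the sum from the tail to all indices (every added term is nonnegative) and invoking the spectral characterization of $\|u\|^2_{\widetilde{H}^s_c(B(0,1))}$ bounds the right-hand side by $\alpha^s\|u\|^2_{\widetilde{H}^s_c(B(0,1))}$; taking square roots delivers the claim. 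The case $s=0$ is immediate, since then the estimate reads $\|\pi_{\alpha,c}u-u\|\le\|u\|$, consistent with $\pi_{\alpha,c}$ being an orthogonal projection.

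I do not anticipate a genuine obstacle, as this is the standard spectral-cutoff tail estimate. The only point demanding care is ensuring that the norm identity $\|u\|^2_{\widetilde{H}^s_c(B(0,1))}=\sum\chi^s_{m,n}(c)|\langle u,\widehat{\psi}_{m,n,\ell}\rangle|^2$ holds for every real $s\ge0$ and not merely for integer $s$; this is precisely how $\widetilde{H}^s_c(B(0,1))$ was defined by interpolation in Section \ref{section SLP}, so the identity is available by construction. One should also note that the finiteness of the full weighted sum is exactly the hypothesis $u\in\widetilde{H}^s_c(B(0,1))$, so no convergence issue arises in passing from the tail to the complete series.
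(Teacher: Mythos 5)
Your proposal is correct and follows essentially the same argument as the paper's own proof: Parseval on the orthogonal tail, the pointwise bound $1\le\alpha^s\chi^s_{m,n}(c)$ on the indices with $\chi_{m,n}(c)\ge\alpha^{-1}$, and enlargement of the sum to recover the $\widetilde{H}^s_c(B(0,1))$ norm. No differences worth noting.
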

\begin{proof}
This proof follows directly from
\begin{eqnarray*}
\|  \pi_{\alpha,c} u - u \|^2 &=&  \sum_{m,n \in \mathbb{N},\ell  \in \mathbbm{I}(m)}^{\chi_{m,n}(c)\ge\alpha^{-1}}  \left| \left\langle u, \frac{\psi_{m,n,\ell}(\cdot;c)}{\|\psi_{m,n,\ell}(\cdot;c)\|} \right\rangle \right|^2 \\
&=&  \sum_{m,n \in \mathbb{N},\ell  \in \mathbbm{I}(m)}^{\chi_{m,n}(c)\ge\alpha^{-1}} \chi^{-s}_{m,n}(c)  \chi^s_{m,n}(c)  \left| \left\langle u, \frac{\psi_{m,n,\ell}(\cdot;c)}{\|\psi_{m,n,\ell}(\cdot;c)\|} \right\rangle \right|^2 \\
&\le & \alpha^s \sum_{m,n \in \mathbb{N},\ell  \in \mathbbm{I}(m)}^{\chi_{m,n}(c)\ge\alpha^{-1}} \chi^s_{m,n}(c)   \left| \left\langle u, \frac{\psi_{m,n,\ell}(\cdot;c)}{\|\psi_{m,n,\ell}(\cdot;c)\|} \right\rangle \right|^2 \le \alpha^s \|u\|^2_{\widetilde{H}^s_c(B(0,1))}. 
\end{eqnarray*}
This completes the proof.
\end{proof}

The following lemma estimates the $\widetilde{H}^s_c(B(0,1))$ norm by the ${H}^s(B(0,1))$ norm.
\begin{lemma} \label{section SLP lemma u tildeH^1 to H^1}
Suppose that $u \in  {H}^s(B(0,1))$ with $0<s\le1$. Then $u \in  \widetilde{H}^s_c(B(0,1))$ and
\begin{equation} \label{section SLP lemma u tildeH^1 to H^1 eqn}
\|u\|_{\widetilde{H}^s_c(B(0,1))} \le C^{s/2} (1+c^2)^{s/2} \|u\|_{H^s(B(0,1))}  
\end{equation}
for some positive constant $C\ge\sqrt{3}$ independent of $s$ and $c$.
\end{lemma}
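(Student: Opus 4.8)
The plan is to prove the inequality at the two endpoints $s=0$ and $s=1$ and then obtain the whole range $0<s<1$ by interpolation. At $s=0$ both norms reduce to the $L^2(B(0,1))$ norm, so the estimate holds with constant one. The substance is therefore the endpoint $s=1$; once that is in hand, the intermediate estimate follows from the fact that $\widetilde{H}^s_c(B(0,1))$ is defined precisely as the interpolation space between $\widetilde{H}^0_c=L^2$ and $\widetilde{H}^1_c$, with the spectral norm recorded above.

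For $s=1$ I would start from the identity $\|u\|^2_{\widetilde{H}^1_c(B(0,1))}=(\mathcal{D}_{c,x}u,u)_{B(0,1)}$ and the self-adjoint (Dirichlet) form already displayed above, namely
\begin{equation*}
\|u\|^2_{\widetilde{H}^1_c(B(0,1))} = \|\nabla u\|^2_{L^2(B(0,1);w)} + \|(x_2\partial_{x_1}-x_1\partial_{x_2})u\|^2_{L^2(B(0,1))} + c^2\|xu\|^2_{L^2(B(0,1))}.
\end{equation*}
Each term is then controlled by the plain $H^1$ data using that on $B(0,1)$ one has $w(x)=1-|x|^2\le 1$ and $|x|\le 1$: the weight gives $\|\nabla u\|^2_{L^2(B(0,1);w)}\le\|\nabla u\|^2_{L^2(B(0,1))}$; the pointwise Cauchy--Schwarz bound $|x_2\partial_{x_1}u-x_1\partial_{x_2}u|\le|x|\,|\nabla u|\le|\nabla u|$ gives $\|(x_2\partial_{x_1}-x_1\partial_{x_2})u\|^2_{L^2}\le\|\nabla u\|^2_{L^2}$; and $|x|\le1$ gives $c^2\|xu\|^2_{L^2}\le c^2\|u\|^2_{L^2}$. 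Summing yields $\|u\|^2_{\widetilde{H}^1_c}\le 2\|\nabla u\|^2_{L^2}+c^2\|u\|^2_{L^2}$, which is at most $C(1+c^2)\|u\|^2_{H^1(B(0,1))}$ for a universal $C$ (e.g. $C=3$, so that $C^{1/2}=\sqrt{3}$). In operator terms, the inclusion $\iota:H^1(B(0,1))\to\widetilde{H}^1_c(B(0,1))$ then has norm at most $(C(1+c^2))^{1/2}$, while $\iota:L^2\to L^2$ has norm one.

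To pass to $0<s<1$ I would interpolate this single inclusion operator between its two endpoints. Since $\widetilde{H}^s_c(B(0,1))$ is by definition the interpolation space $[\,L^2,\widetilde{H}^1_c\,]_s$ with the stated spectral norm, and since for the smooth domain $B(0,1)$ one has $H^s(B(0,1))=[\,L^2(B(0,1)),H^1(B(0,1))\,]_s$ with norms uniformly equivalent over $s\in[0,1]$, the interpolation property of operator norms gives
\begin{equation*}
\|\iota\|_{H^s\to\widetilde{H}^s_c}\le \|\iota\|^{1-s}_{L^2\to L^2}\,\|\iota\|^{s}_{H^1\to\widetilde{H}^1_c}\le 1^{1-s}\big(C(1+c^2)\big)^{s/2}=C^{s/2}(1+c^2)^{s/2},
\end{equation*}
which is exactly \eqref{section SLP lemma u tildeH^1 to H^1 eqn}. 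The crucial point is that the $c$-dependence has been extracted explicitly as the factor $1+c^2$ at the endpoint, so the interpolated constant retains the clean form $C^{s/2}(1+c^2)^{s/2}$ with $C$ independent of both $s$ and $c$.

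The main obstacle is the clean matching of the two interpolation scales rather than any hard estimate. I must make sure that the spectral norm defining $\widetilde{H}^s_c$ really is the interpolation norm of $[\,L^2,\widetilde{H}^1_c\,]_s$ (this is the content of the interpolation definition borrowed from \cite{mclean00}, using that $u\mapsto(\langle u,\psi_{m,n,\ell}(\cdot;c)/\|\psi_{m,n,\ell}(\cdot;c)\|\rangle)$ is an isometry onto a weighted $\ell^2$ space whose intermediate weight is $\chi^s_{m,n}(c)$), and that the identification $H^s(B(0,1))=[\,L^2,H^1\,]_s$ holds with constants independent of $s$, absorbing any equivalence constant into $C$ (which is why the statement only asks for some $C\ge\sqrt{3}$). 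I would also flag that the naive shortcut of applying H\"older's inequality directly to the spectral sum, $\|u\|^2_{\widetilde{H}^s_c}\le\|u\|^{2s}_{\widetilde{H}^1_c}\|u\|^{2(1-s)}_{L^2}$, is \emph{insufficient}: it interpolates only the target norm and leaves the full $H^1$ norm on the right, whereas genuine two-sided interpolation is needed to downgrade $H^1$ to $H^s$ on the right-hand side.
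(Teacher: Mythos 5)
Your proposal is correct and follows essentially the same route as the paper: the $s=1$ endpoint is established from the quadratic form of $\mathcal{D}_{c,x}$ by bounding the weighted gradient, rotational, and $c^2\|xu\|^2$ terms exactly as you do (the paper uses the slightly cruder bound $2\|\nabla u\|^2$ for the rotational term, arriving at $(3+c^2)\|u\|^2_{H^1(B(0,1))}$), and the range $0<s<1$ is then obtained by interpolating the inclusion operator between the $L^2$ and $H^1$ endpoints, invoking that $\widetilde{H}^s_c$ and $H^s$ are both defined by interpolation (the paper cites \cite[Exercise B.8 and Theorems B.8, B.2]{mclean00} for precisely the identifications you flag as needing care). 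Your added remarks on matching the spectral norm to the interpolation norm and on the insufficiency of the naive H\"older shortcut are sound but go beyond what the paper records.
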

\begin{proof}
We first prove the case when $s=1$.
This is a direct consequence of 
 \begin{eqnarray*}
&&\|u\|^2_{{ \widetilde{H}^1_c(B(0,1))}} =  \int_{B(0,1)} \big(D_{c,x}u\big) \overline{u} \ind x  \\
&=& \|\nabla u\|_{L^2(B(0,1);w)} +   \| (x_2 \partial_{x_1} - x_1 \partial_{x_2}) u\|^2_{L^2(B(0,1))} + c^2 \|x u\|^2_{L^2(B(0,1))}\\
&\le& \|\nabla u\|^2_{L^2(B(0,1))} + 2\|\nabla u\|^2_{L^2(B(0,1))} + c^2\| u\|^2_{L^2(B(0,1))} \\
&\le& (3+c^2) \|u\|^2_{H^1(B(0,1))},
\end{eqnarray*}
which yields that
 \begin{eqnarray} \label{section SLP lemma u tildeH^1 to H^1 proof eqn 1}
\|u\|_{{ \widetilde{H}^1_c}(B(0,1))} \le C (1+c^2)^{1/2} \|u\|_{H^1(B(0,1))},
\end{eqnarray}
for some positive constant $C\ge\sqrt{3}$ independent of $s$ and $c$.
Note that $\widetilde{H}^s_c(B(0,1))$ for any real $s>0$ is given by interpolation as in \cite[Exercise B.8, p.~333]{mclean00}, and ${H}^s(B(0,1))$ for any real $s>0$ is given by interpolation as in \cite[Theorem B.8, p.~330]{mclean00}. Now an application of interpolation of Sobolev spaces  \cite[Theorem B.2, p.~320]{mclean00} together with estimate \eqref{section SLP lemma u tildeH^1 to H^1 proof eqn 1} yields \eqref{section SLP lemma u tildeH^1 to H^1 eqn}. This completes the proof.
\end{proof}
\begin{remark}
The regularity estimate in Lemma \ref{section SLP lemma u tildeH^1 to H^1} is of independent interest in approximation theory using the generalized prolate spheroidal wave functions and the data-driven basis.
\end{remark}

Now we are ready to prove the following theorem.
\begin{theorem} \label{section SLP theorem u H^1 projection error}
Suppose that $u \in  {H}^s(B(0,1))$ with $0<s\le1$. Then 
\begin{eqnarray*}
\|  \pi_{\alpha,c} u - u \| \le (\alpha C)^{s/2}    (1+c^2)^{s/2}  \|u\|_{H^s(B(0,1))},
\end{eqnarray*}
for some positive constant $C\ge\sqrt{3}$ independent of $\alpha$, $s$ and $c$.
\end{theorem}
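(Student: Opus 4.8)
The plan is to obtain the statement as a direct composition of the two preceding lemmas, since all of the genuine analytic work has already been carried out there; the theorem is essentially a corollary. First I would note that the hypothesis $u\in H^s(B(0,1))$ with $0<s\le1$ is exactly the hypothesis of Lemma \ref{section SLP lemma u tildeH^1 to H^1}. Invoking that lemma places $u$ in the data-driven Sobolev space $\widetilde{H}^s_c(B(0,1))$ and supplies the bound
\[
\|u\|_{\widetilde{H}^s_c(B(0,1))} \le C^{s/2}(1+c^2)^{s/2}\,\|u\|_{H^s(B(0,1))},
\]
with $C\ge\sqrt{3}$ independent of $s$ and $c$.

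Next, now that $u$ is known to lie in $\widetilde{H}^s_c(B(0,1))$, I would apply Lemma \ref{section SLP lemma u tildeH^1 projection error} (which is valid for every $s\ge0$) to control the spectral cutoff error purely in terms of the $\widetilde{H}^s_c$ norm, namely
\[
\|\pi_{\alpha,c}u-u\| \le \alpha^{s/2}\,\|u\|_{\widetilde{H}^s_c(B(0,1))}.
\]
Chaining these two estimates and collecting the prefactor as $\alpha^{s/2}C^{s/2}=(\alpha C)^{s/2}$ yields precisely the asserted inequality $\|\pi_{\alpha,c}u-u\|\le(\alpha C)^{s/2}(1+c^2)^{s/2}\|u\|_{H^s(B(0,1))}$.

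The only point that deserves a moment's care is the bookkeeping of the constant $C$: I would check that the $C$ produced by Lemma \ref{section SLP lemma u tildeH^1 to H^1} is genuinely independent of the cutoff parameter $\alpha$. This is immediate, since that constant arises only from the comparison of $\widetilde{H}^1_c(B(0,1))$ with $H^1(B(0,1))$ and from the subsequent interpolation, neither of which involves $\alpha$; hence the constant in the final bound is $\alpha$-independent as claimed. Beyond this, there is no real obstacle. The self-adjointness and strict positivity of $\mathcal{D}_{c,x}$, the interpolation characterization of $\widetilde{H}^s_c(B(0,1))$, and the spectral decay of the cutoff error were all established in Section \ref{section SLP} and in the two lemmas, so the theorem follows by simply concatenating the two inequalities.
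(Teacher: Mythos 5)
Your proposal is correct and is essentially the paper's own proof: the paper likewise derives the theorem as a direct consequence of Lemma \ref{section SLP lemma u tildeH^1 projection error} and Lemma \ref{section SLP lemma u tildeH^1 to H^1}, chaining the two estimates in exactly the order you describe. Your extra remark on the $\alpha$-independence of $C$ is a correct (and harmless) clarification of the same argument.
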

\begin{proof}
This is a direct consequence of Lemma \ref{section SLP lemma u tildeH^1 projection error} and Lemma \ref{section SLP lemma u tildeH^1 to H^1}. This completes the proof.
\end{proof}
Now we give a similar result for approximation of any function in $H^s(D_F)$, $0<s\le 1$. For any function $u^F \in L^2(D_F)$, we let
 \begin{eqnarray}\label{section SLP projection uF def}
\pi^F_{\alpha} u^F := \sum_{m,n \in \mathbb{N},\ell  \in \mathbbm{I}(m)}^{\chi_{m,n}(c_F)<\alpha^{-1}}  \left\langle u^F, \frac{\psi^F_{m,n,\ell}(\cdot;c_F)}{\|\psi^F_{m,n,\ell}(\cdot;c_F)\|}\right\rangle_{D_F} \frac{\psi^F_{m,n,\ell}(\cdot;c_F)}{\|\psi^F_{m,n,\ell}(\cdot;c_F)\|}.
\end{eqnarray}
Now we can prove the following corollary.
\begin{corollary} \label{section SLP theorem u H^1 Bc/2k projection error}
Suppose that $u^F \in  {H}^s(D_F)$ with $0<s\le1$. Then 
\begin{eqnarray*}
\left\|  \pi^F_{\alpha} u^F - u^F \right\|_{L^2\left(D_F\right)} \le (\alpha C)^{s/2}   (1+c_F^2)^{s/2}\left(1+\frac{c_F}{2k}\right)^s  \left\|u^F\right\|_{H^s\left(D_F\right)},
\end{eqnarray*}
for some positive constant $C\ge\sqrt{3}$ independent of $\alpha$, $s$, and $c_F$.
\end{corollary}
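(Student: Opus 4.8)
The plan is to reduce the corollary to Theorem \ref{section SLP theorem u H^1 projection error} on the unit disk by exploiting the dilation that carries $D_F = B(0,\frac{c_F}{2k})$ onto $B(0,1)$. Writing $r_F := \frac{c_F}{2k}$, so that $D_F = B(0,r_F)$, I would associate to each $u^F \in L^2(D_F)$ its pull-back $v(\xi) := u^F(r_F \xi)$, $\xi \in B(0,1)$. Recalling from \eqref{section full aperture def gpswf psi^F} that $\psi^F_{m,n,\ell}(\cdot;c_F) = r_F^{-1}\psi_{m,n,\ell}(\cdot/r_F;c_F)$, the change of variables $x = r_F\xi$ produces three elementary scaling identities: $\langle u^F, \psi^F_{m,n,\ell}\rangle_{D_F} = r_F\langle v, \psi_{m,n,\ell}\rangle_{B(0,1)}$, $\|\psi^F_{m,n,\ell}\|_{D_F} = \|\psi_{m,n,\ell}\|_{B(0,1)}$, and $\psi^F_{m,n,\ell}(r_F\xi;c_F) = r_F^{-1}\psi_{m,n,\ell}(\xi;c_F)$.

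The crucial observation is that the two spectral cutoffs $\pi^F_\alpha$ in \eqref{section SLP projection uF def} and $\pi_{\alpha,c_F}$ in \eqref{section SLP projection u def} truncate over the \emph{same} index set $\{\chi_{m,n}(c_F) < \alpha^{-1}\}$, since the dilation rescales the eigenfunctions but leaves the Sturm--Liouville eigenvalues $\chi_{m,n}(c_F)$ unchanged. Substituting the three identities above term-by-term into \eqref{section SLP projection uF def}, the factors of $r_F$ cancel and I obtain the commutation identity $(\pi^F_\alpha u^F)(r_F\xi) = (\pi_{\alpha,c_F}v)(\xi)$ for $\xi \in B(0,1)$. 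One more change of variables then gives $\|\pi^F_\alpha u^F - u^F\|_{L^2(D_F)} = r_F\|\pi_{\alpha,c_F}v - v\|_{L^2(B(0,1))}$, so applying Theorem \ref{section SLP theorem u H^1 projection error} to $v$ with $c = c_F$ yields $\|\pi^F_\alpha u^F - u^F\|_{L^2(D_F)} \le (\alpha C)^{s/2}(1+c_F^2)^{s/2}\, r_F\|v\|_{H^s(B(0,1))}$.

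It then remains to bound $r_F\|v\|_{H^s(B(0,1))}$ by $\|u^F\|_{H^s(D_F)}$, which is where the real work is. I would work with the rescaling operator $S u^F := r_F\, u^F(r_F\cdot)$. A direct computation shows $S$ is an isometry $L^2(D_F) \to L^2(B(0,1))$, while the chain rule gives $\|Su^F\|_{H^1(B(0,1))}^2 = \|u^F\|_{L^2(D_F)}^2 + r_F^2\|\nabla u^F\|_{L^2(D_F)}^2 \le \max(1,r_F^2)\|u^F\|_{H^1(D_F)}^2$, so that $\|S\|_{H^1\to H^1} \le 1+r_F$. Interpolating these two endpoint bounds with the same machinery invoked in Lemma \ref{section SLP lemma u tildeH^1 to H^1} (McLean, Theorem B.2) gives $\|S\|_{H^s\to H^s} \le 1^{1-s}(1+r_F)^s = (1+r_F)^s$ for $0<s<1$, that is $r_F\|v\|_{H^s(B(0,1))} = \|Su^F\|_{H^s(B(0,1))} \le (1+\frac{c_F}{2k})^s\|u^F\|_{H^s(D_F)}$. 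Chaining this into the previous display produces exactly the claimed estimate.

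The main obstacle is the fractional scaling of the $H^s$ norm under dilation: the endpoint identities at $s=0$ and $s=1$ are routine, but extending to $0<s<1$ forces the use of Sobolev interpolation, and care is needed because here the dilation constant $r_F = c_F/(2k)$ does \emph{not} drop out as it would on a fixed domain. It is precisely this constant that generates the extra factor $\left(1+\frac{c_F}{2k}\right)^s$ which distinguishes the corollary from Theorem \ref{section SLP theorem u H^1 projection error}.
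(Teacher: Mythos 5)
Your proposal is correct and follows essentially the same route as the paper's own proof: the paper also rescales via $u(p)=\frac{c_F}{2k}u^F\left(p\,\frac{c_F}{2k}\right)$ (your $Su^F$), verifies that the spectral cutoff commutes with this dilation so that $\|\pi^F_\alpha u^F-u^F\|_{L^2(D_F)}=\|\pi_{\alpha,c_F}u-u\|_{L^2(B(0,1))}$, applies Theorem \ref{section SLP theorem u H^1 projection error} with $c=c_F$, and then obtains the factor $\left(1+\frac{c_F}{2k}\right)^s$ by interpolating between the $L^2$ isometry and the $H^1$ bound exactly as you do. Your splitting of the rescaling into $v$ and $S u^F = r_F v$ is only a cosmetic repackaging of the same argument.
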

\begin{proof}
Let
$$
u(p) := \frac{c_F}{2k}u^F\left(p\frac{c_F}{2k} \right), \quad p\in B(0,1).
$$ 
Then it follows from \eqref{section SLP projection uF def} and  relation \eqref{section full aperture def gpswf psi^F} between $\psi^F_{m,n,\ell}$ and $\psi_{m,n,\ell}(\cdot;c_F)$ that
\begin{eqnarray*}
&&\frac{c_F}{2k}\left(\pi^F_{\alpha} u^F\right) \left(\frac{c_F}{2k}p' \right) \\
&=& \frac{c_F}{2k}  \sum_{m,n \in \mathbb{N},\ell  \in \mathbbm{I}(m)}^{\chi_{m,n}(c_F)<\alpha^{-1}}  \left\langle u^F, \frac{\psi^F_{m,n,\ell}(\cdot;c_F)}{\|\psi^F_{m,n,\ell}(\cdot;c_F)\|}\right\rangle_{D_F} \frac{\psi^F_{m,n,\ell}\left(\frac{c_F}{2k}p';c_F \right)}{\|\psi^F_{m,n,\ell}(\cdot;c_F)\|_{D_F}} \\
&=&  \sum_{m,n \in \mathbb{N},\ell  \in \mathbbm{I}(m)}^{\chi_{m,n}(c_F)<\alpha^{-1}}  \left\langle u, \frac{\psi_{m,n,\ell}(\cdot;c_F)}{\|\psi_{m,n,\ell}(\cdot;c_F)\|}\right\rangle_{B(0,1)} \frac{\psi_{m,n,\ell}\left(p';c_F \right)}{\|\psi_{m,n,\ell}(\cdot;c_F)\|_{B(0,1)}}  = \left(\pi_{\alpha,c_F} u\right)(p').
\end{eqnarray*}
This allows us to derive that
 \begin{eqnarray*}
&&\left\|  \pi^F_{\alpha} u^F - u^F \right\|^2_{L^2\left(D_F\right)}  =  \int_{D_F} \left|u^F(p) - (\pi^F_{\alpha} u^F) (p)\right|^2 \ind p \\
&=& \int_{B(0,1)} \left|u^F\left(\frac{c_F}{2k}p'\right) - (\pi^F_{\alpha} u^F) \left(\frac{c_F}{2k}p'\right) \right|^2 \left(\frac{c_F}{2k}\right)^2 \ind p' =\|  \pi_{\alpha,c_F} u - u \|^2_{L^2(B(0,1))}.
\end{eqnarray*}
Together with Theorem \ref{section SLP theorem u H^1 projection error}, by setting $c=c_F$, we have that
 \begin{eqnarray*}
&&\left\|  \pi^F_{\alpha} u^F - u^F \right\|_{L^2\left(D_F\right)} \le   (\alpha C)^{s/2}  (1+c_F^2)^{s/2}\|u\|_{H^s(B(0,1))} \\
&\le& (\alpha C)^{s/2} (1+c_F^2)^{s/2}\left(1+\frac{c_F}{2k}\right)^s  \|u^F\|_{H^s\left(D_F\right)},
\end{eqnarray*}
where in the last step we have applied 
$$
\|u\|_{H^s(B(0,1))} \le \left(1+\frac{c_F}{2k}\right)^s  \|u^F\|_{H^s\left(D_F\right)},
$$
which is due to the interpolation property of Sobolev spaces, $\|u\|_{H^0(B(0,1))} = \|u^F\|_{H^0\left(D_F\right)} $ and 
$$
\|u\|_{H^1(B(0,1))} = \sqrt{ \|u^F\|^2_{L^2\left(D_F\right)} + \Big(\frac{c_F}{2k}\Big)^2\|\nabla u^F \|^2_{L^2\left(D_F\right)} } \le \left(1+\frac{c_F}{2k}\right) \|u^F\|_{H^1\left(D_F\right)}.
$$
This completes the proof.
\end{proof}
With the above regularity estimates, we are ready to study the following regularization strategy and stability estimate.
\subsection{Regularization  and stability} \label{section full aperture regularization stabiltiy}
In practice, the data is only known up to an error $\delta$ with $\|{    u_{b,F}^{\infty,\delta}} - { u_{b,F}^{\infty}}\|_{L^2(D_F)} \le \delta$. In the following we study a regularization strategy based on spectral cutoff. To begin with, for a small value $\alpha>0$ let the operator $\mathcal{R}_{\alpha}^F:L^2(D_F) \to L^2(D_F)$ be given by
  \begin{eqnarray} \label{section SLP/regularization R alpha def}
\mathcal{R}_{\alpha}^F u^F = \sum_{\mathbb{J}(\alpha)}   \frac{1}{(\frac{c_F}{2k})^2 \alpha_{m,n}(c_F) } \left\langle u^F, \frac{\psi^F_{m,n,\ell}(\cdot;c_F)}{\|\psi^F_{m,n,\ell}(\cdot;c_F)\|}\right\rangle_{D_F} \frac{\psi^F_{m,n,\ell}(\cdot;c_F)}{\|\psi^F_{m,n,\ell}(\cdot;c_F)\|}
\end{eqnarray}
for all $u^F \in L^2(D_F)$, where { $\mathbb{J}(\alpha):=\{m,n \in \mathbb{N},\ell  \in \mathbbm{I}(m): \chi_{m,n}(c_F)<\alpha^{-1} \}$.} We also introduce 
\begin{equation}  \label{section SLP/regularization R beta(alpha) def}
\beta(\alpha):= \min_{(m,n,\ell) \in \mathbb{J}(\alpha)} \left\{\left(\frac{c_F}{2k}\right)^2 \left|\alpha_{m,n}(c_F)\right| \right\}.
\end{equation}
Let  $q^{\delta,\alpha}$ be a regularized solution given by
\begin{eqnarray} \label{section SLP/regularization lemma qdelta}
q^{\delta,\alpha} &:=& \mathcal{R}_{\alpha}^F {    u_{b,F}^{\infty,\delta}} \nonumber \\
&=& \sum_{\mathbb{J}(\alpha)}   \frac{1}{(\frac{c_F}{2k})^2 \alpha_{m,n}(c_F) } \left\langle {    u_{b,F}^{\infty,\delta}}, \frac{\psi^F_{m,n,\ell}(\cdot;c_F)}{\|\psi^F_{m,n,\ell}(\cdot;c_F)\|}\right\rangle_{D_F} \frac{\psi^F_{m,n,\ell}(\cdot;c_F)}{\|\psi^F_{m,n,\ell}(\cdot;c_F)\|}.
\end{eqnarray}
We first prove the following lemma.
\begin{lemma} \label{section SLP/regularization lemma qdelta-q estimate}
Let $c_F$ be chosen such that   $\Omega \subset D_F=B(0,\frac{c_F}{2k})$. Suppose that $\|{    u_{b,F}^{\infty,\delta}} - { u_{b,F}^{\infty}}\|_{L^2(D_F)} \le \delta$. Let  $q^{\delta,\alpha}$ be a regularized solution given by \eqref{section SLP/regularization lemma qdelta} and $\beta(\alpha)$ be given by \eqref{section SLP/regularization R beta(alpha) def}. Then it holds that
\begin{eqnarray}\label{section SLP/regularization lemma qdelta-q estimate eqn}
\|q^{\delta,\alpha} - \underline{q}\|_{L^2\left(D_F\right)} \le  \frac{\delta}{\beta(\alpha)} +  \|\pi^F_\alpha \underline{q} - \underline{q}\|_{L^2\left(D_F\right)}.
\end{eqnarray}
\end{lemma}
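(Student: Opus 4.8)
The plan is to bound the total error by the triangle inequality, splitting it into a term controlled by the noise level and a term controlled by the spectral cutoff applied to the exact solution. Concretely, I would insert the noise-free regularized reconstruction $\mathcal{R}_\alpha^F { u_{b,F}^{\infty}}$ as an intermediate quantity and write
\begin{eqnarray*}
\|q^{\delta,\alpha} - \underline{q}\|_{L^2(D_F)} \le \|\mathcal{R}_\alpha^F {    u_{b,F}^{\infty,\delta}} - \mathcal{R}_\alpha^F { u_{b,F}^{\infty}}\|_{L^2(D_F)} + \|\mathcal{R}_\alpha^F { u_{b,F}^{\infty}} - \underline{q}\|_{L^2(D_F)}.
\end{eqnarray*}
The first term is the noise propagation term and the second is the approximation (regularization bias) term; I would then estimate each separately.

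For the noise term, I would use the linearity of $\mathcal{R}_\alpha^F$ and apply it to the difference $e:={    u_{b,F}^{\infty,\delta}}-{ u_{b,F}^{\infty}}$, whose norm is at most $\delta$. Using the series definition \eqref{section SLP/regularization R alpha def} together with the double orthonormality \eqref{prop double orthogonality psi_c/2k 2} (which guarantees the normalized eigenfunctions $\psi^F_{m,n,\ell}/\|\psi^F_{m,n,\ell}\|$ are orthonormal in $L^2(D_F)$), the squared norm of $\mathcal{R}_\alpha^F e$ equals $\sum_{\mathbb{J}(\alpha)} \big((\frac{c_F}{2k})^2|\alpha_{m,n}(c_F)|\big)^{-2} |\langle e, \psi^F_{m,n,\ell}/\|\psi^F_{m,n,\ell}\|\rangle_{D_F}|^2$. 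Bounding each coefficient $(\frac{c_F}{2k})^2|\alpha_{m,n}(c_F)|$ from below by $\beta(\alpha)$ (its minimum over the index set, per \eqref{section SLP/regularization R beta(alpha) def}) and using Bessel's inequality for the remaining sum gives $\|\mathcal{R}_\alpha^F e\|_{L^2(D_F)} \le \beta(\alpha)^{-1}\|e\|_{L^2(D_F)} \le \delta/\beta(\alpha)$, which is the first term on the right-hand side of \eqref{section SLP/regularization lemma qdelta-q estimate eqn}.

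For the approximation term, the key observation is that $\mathcal{R}_\alpha^F$ applied to the \emph{exact} data exactly recovers the spectral cutoff $\pi^F_\alpha \underline{q}$. Indeed, since ${ u_{b,F}^{\infty}}$ is given by \eqref{section full aperture ubF} and each $\psi^F_{m,n,\ell}$ is an eigenfunction via \eqref{prop operator eigenfunction gpswf psi_c/2k}, projecting ${ u_{b,F}^{\infty}}$ onto $\psi^F_{m,n,\ell}/\|\psi^F_{m,n,\ell}\|$ yields $(\frac{c_F}{2k})^2\alpha_{m,n}(c_F)$ times the corresponding coefficient of $\underline{q}$; the prefactor $\big((\frac{c_F}{2k})^2\alpha_{m,n}(c_F)\big)^{-1}$ in \eqref{section SLP/regularization R alpha def} cancels this exactly, so $\mathcal{R}_\alpha^F { u_{b,F}^{\infty}} = \pi^F_\alpha \underline{q}$ by comparison with \eqref{section SLP projection uF def}. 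Hence the second term equals $\|\pi^F_\alpha \underline{q} - \underline{q}\|_{L^2(D_F)}$ verbatim, completing the estimate. I expect the only genuine subtlety to be verifying this exact cancellation carefully — that the eigenvalue relation, the normalization, and the index set $\mathbb{J}(\alpha)$ in $\mathcal{R}_\alpha^F$ match precisely those defining $\pi^F_\alpha$ — since everything else is a routine bound-the-eigenvalues-and-apply-Bessel computation.
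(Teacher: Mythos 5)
Your proposal is correct and follows essentially the same route as the paper's proof: the same triangle-inequality splitting around $\mathcal{R}_\alpha^F { u_{b,F}^{\infty}}$, the same bound $\|\mathcal{R}_\alpha^F\|\le 1/\beta(\alpha)$ for the noise term, and the same key identification $\mathcal{R}_\alpha^F { u_{b,F}^{\infty}} = \pi^F_\alpha \underline{q}$ (which the paper obtains by citing the Picard criterion of Theorem \ref{thm chi_q series expansion full aperture}, while you verify the eigenvalue cancellation directly). The extra detail you give via double orthonormality and Bessel's inequality is a slightly more explicit rendering of the paper's operator-norm argument, not a different approach.
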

\begin{proof}
First we observe that
\begin{eqnarray}
\|q^{\delta,\alpha} - \underline{q}\|_{L^2(D_F)}&=& \|\mathcal{R}_{\alpha}^F  {    u_{b,F}^{\infty,\delta}} -\mathcal{R}_{\alpha}^F  { u_{b,F}^{\infty}} + \mathcal{R}_{\alpha}^F { u_{b,F}^{\infty}} - \underline{q}\|_{L^2(D_F)} \nonumber  \\
&\le& \|\mathcal{R}_{\alpha}^F  {    u_{b,F}^{\infty,\delta}} -\mathcal{R}_{\alpha}^F  { u_{b,F}^{\infty}}\|_{L^2(D_F)} + \|\mathcal{R}_{\alpha}^F { u_{b,F}^{\infty}} - \underline{q}\|_{L^2(D_F)}. \label{section SLP/regularization lemma qdelta-q estimate proof eqn 1}
\end{eqnarray}
Now we estimate each of the terms in the right hand side of the above inequality.

It is directly seen that
\begin{eqnarray}\label{proof full aperture Ru-chiq estimate eqn 2}
\|\mathcal{R}_{\alpha}^F  {    u_{b,F}^{\infty,\delta}} -\mathcal{R}_{\alpha}^F  { u_{b,F}^{\infty}}\|_{L^2(D_F)} \le \|\mathcal{R}_{\alpha}^F \|  \|{    u_{b,F}^{\infty,\delta}} -  { u_{b,F}^{\infty}}\|_{L^2(D_F)}  \le  \delta  \|\mathcal{R}_{\alpha}^F \| \le   \frac{\delta}{\beta(\alpha)}
\end{eqnarray}
where in the last step we have applied $\|\mathcal{R}_{\alpha}^F \|  \le 1/\beta$ due to \eqref{section SLP/regularization R alpha def} and that $ \big(\frac{c_F}{2k}\big)^2 |\alpha_{m,n}(c_F)|\ge \beta(\alpha)$ for $(m,n,\ell) \in \mathbb{J}(\alpha)$; here we note that $\beta(\alpha)$ is given by \eqref{section SLP/regularization R beta(alpha) def}.

For the second estimate, we observe from \eqref{thm chi_q series expansion full aperture representation} that 
\begin{eqnarray*}
 \|\mathcal{R}_{\alpha}^F { u_{b,F}^{\infty}} - \underline{q}\|_{L^2(D_F)} &=& \Big\|\sum_{\mathbb{J}(\alpha)} \langle \underline{q}, \psi^F_{m,n,\ell}/\|\psi^F_{m,n,\ell}\|\rangle_{D_F} \psi^F_{m,n,\ell}/\|\psi^F_{m,n,\ell}\| - \underline{q} \Big\|_{L^2(D_F)} \\
 &\le& \|\pi^F_\alpha \underline{q} - \underline{q}\|_{L^2(D_F)}.
\end{eqnarray*}
This  completes the proof.
\end{proof}
Now we are ready to prove the main theorem.
{ 
\begin{theorem} \label{section SLP/regularization theorem qdelta-q estimate}
Let $c_F$ be chosen such that   $\Omega \subset D_F=B(0,\frac{c_F}{2k})$. Suppose that $\|{    u_{b,F}^{\infty,\delta}} - { u_{b,F}^{\infty}}\|_{L^2(D_F)} \le \delta$. Let  $q^{\delta,\alpha}$ be a regularized solution given by
 \eqref{section SLP/regularization lemma qdelta} and $\beta(\alpha)$ be given by \eqref{section SLP/regularization R beta(alpha) def}. If $\underline{q} \in  {H}^s(D_F)$ with $0<s<1/2$,  then it holds that
\begin{eqnarray}\label{section SLP/regularization theorem qdelta-q estimate eqn}
\|{ q^{\delta,\alpha}} - \underline{q}\|_{L^2\left(D_F\right)} \le  \frac{\delta}{{ \beta(\alpha)}} + (\alpha C) ^{s/2} (1+c_F^2)^{s/2}\left(1+\frac{c_F}{2k}\right)^s  \| \underline{q}\|_{H^s\left(D_F\right)},
\end{eqnarray}
where $C\ge\sqrt{3}$ is a positive constant independent of $\delta$, $\alpha$, $s$, and $c_F$.
\end{theorem}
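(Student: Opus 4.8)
The plan is to obtain \eqref{section SLP/regularization theorem qdelta-q estimate eqn} as a direct synthesis of the two preparatory results already in hand: the error decomposition of Lemma \ref{section SLP/regularization lemma qdelta-q estimate} and the spectral-cutoff approximation bound of Corollary \ref{section SLP theorem u H^1 Bc/2k projection error}. First I would invoke Lemma \ref{section SLP/regularization lemma qdelta-q estimate}, whose hypotheses (the choice of $c_F$ with $\Omega \subset D_F$ and the noise bound $\|{ u_{b,F}^{\infty,\delta}} - { u_{b,F}^{\infty}}\|_{L^2(D_F)} \le \delta$) coincide with those of the present theorem. This already delivers
\[
\|{ q^{\delta,\alpha}} - \underline{q}\|_{L^2(D_F)} \le \frac{\delta}{\beta(\alpha)} + \|\pi^F_\alpha \underline{q} - \underline{q}\|_{L^2(D_F)},
\]
separating the total reconstruction error into a noise-propagation term, controlled by the spectral lower bound $\beta(\alpha)$, and a purely deterministic truncation term measuring how well the spectral cutoff $\pi^F_\alpha$ resolves the contrast extension $\underline{q}$.

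The second and final step would be to bound the truncation term by applying Corollary \ref{section SLP theorem u H^1 Bc/2k projection error} with $u^F = \underline{q}$. Since the theorem assumes $\underline{q} \in H^s(D_F)$ with $0 < s < 1/2$, in particular $0 < s \le 1$, the corollary applies and yields
\[
\|\pi^F_\alpha \underline{q} - \underline{q}\|_{L^2(D_F)} \le (\alpha C)^{s/2}(1+c_F^2)^{s/2}\left(1+\frac{c_F}{2k}\right)^s \|\underline{q}\|_{H^s(D_F)}.
\]
Substituting this into the decomposition above reproduces \eqref{section SLP/regularization theorem qdelta-q estimate eqn} verbatim, with the same constant $C \ge \sqrt{3}$ inherited from the corollary. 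No further estimation is needed.

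Consequently there is no genuinely hard step in this final assembly; the analytic weight of the argument was already discharged upstream, in the Sturm--Liouville spectral theory (Lemma \ref{Lemma chimn asymptotic}) underpinning the $\widetilde{H}^s_c$ versus $H^s$ comparison, and in the change-of-variables scaling between $B(0,1)$ and $D_F$. The one point I would flag, rather than a computational obstacle, is the conceptual role of the restriction $0 < s < 1/2$. Although Corollary \ref{section SLP theorem u H^1 Bc/2k projection error} is itself valid for all $0 < s \le 1$, the hypothesis that the zero-extension $\underline{q}$ defined by \eqref{section full aperture underline q def} genuinely lies in $H^s(D_F)$ is natural only in this subcritical range: for $s < 1/2$ the extension by zero of an $H^s(\Omega)$ function remains in $H^s$ without any trace or compatibility condition at $\partial\Omega$, whereas for $s \ge 1/2$ the jump across $\partial\Omega$ would destroy the $H^s$ regularity. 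Thus the stated range $0 < s < 1/2$ is precisely what reconciles a regularity assumption on the physical contrast $q$ with an estimate phrased in terms of its extension $\underline{q}$.
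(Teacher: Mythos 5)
Your proof is correct and follows exactly the same route as the paper's own proof: it invokes Lemma \ref{section SLP/regularization lemma qdelta-q estimate} for the noise/truncation error decomposition and then applies Corollary \ref{section SLP theorem u H^1 Bc/2k projection error} with $u^F=\underline{q}$ to bound the truncation term, which is all the paper does. Your closing observation on why the restriction $0<s<1/2$ is the natural range for the zero-extension $\underline{q}$ also mirrors the paper's own remark immediately following the theorem.
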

}
\begin{proof}
From Lemma \ref{section SLP/regularization lemma qdelta-q estimate}, it is sufficient to estimate $\|\pi^F_\alpha \underline{q} - \underline{q}\|_{L^2(D_F)}$. Note that $\underline{q} \in  {H}^s(D_F)$; then by Corollary \ref{section SLP theorem u H^1 Bc/2k projection error}, 
\begin{eqnarray*}
\|  \pi^F_\alpha \underline{q} - \underline{q} \|_{L^2(D_F)} \le (\alpha C)^{s/2}  (1+c_F^2)^{s/2}\left(1+\frac{c_F}{2k}\right)^s  \| \underline{q}\|_{H^s(D_F)}.
\end{eqnarray*}
{ This proves \eqref{section SLP/regularization theorem qdelta-q estimate eqn} and completes the proof.
}
\end{proof}

{  
\begin{remark}
The regularity assumption $\underline{q} \in  {H}^s(D_F)$ with $0<s<1/2$ is due to the fact that $\underline{q}$ is the extension of $q$ by $0$. In the case that $\Omega$ coincides with $D_F$, it is feasible to have  the regularity assumption that $q=\underline{q} \in  {H}^s(D_F)$ with any $0<s\le 1$  where the main result still holds; it is also possible to generalize the result for any $s>0$.
\end{remark}
\begin{remark}
Due to the asymptotic $
(m+2n)(m+2n+2) < \chi_{m,n}(c) <  (m+2n)(m+2n+2)+c^2$ in Lemma \ref{Lemma chimn asymptotic}, one may obtain an explicit estimate on the set $\mathbb{J}(\alpha)=\{m,n \in \mathbb{N},\ell  \in \mathbbm{I}(m): \chi_{m,n}(c_F)<\alpha^{-1}\}$ by 
$$
\alpha < \chi_{m,n}^{-1}(c_F) < \frac{1}{(m+2n)^2}, \quad \forall (m,n,\ell) \in \mathbb{J}(\alpha).
$$ 
The parameter $\beta(\alpha)$ depends on $\alpha$ via \eqref{section SLP/regularization R beta(alpha) def} so that
$$
\beta(\alpha) \le   \left\{\left(\frac{c_F}{2k}\right)^2 \left|\alpha_{m,n}(c_F)\right| \right\}, \quad \forall (m,n,\ell) \in \mathbb{J}(\alpha).
$$
Note that  $\alpha_{m,n}(c_F)$ is the eigenvalue of a Fourier type integral operator, so one expects that $\alpha_{m,n}(c_F)$ decay exponentially to zero. Therefore for a given noise level $\delta$, if we choose $\beta(\alpha)=\delta^{\gamma}$ with $0<\gamma<1$, $(\frac{\delta}{{ \beta(\alpha)}},\alpha^{s/2})$ is expected to appear in H\"{o}lder-logarithmic pair, and hence stability estimate \eqref{section SLP/regularization theorem qdelta-q estimate eqn} of Theorem \ref{section SLP/regularization theorem qdelta-q estimate} is expected in the H\"{o}lder-logarithmic type. 
\end{remark}

A further note on the computation. Noting that $\psi_{m,n,\ell}$ is the eigenfunction of the Sturm-Liouville operator, one can compute such an eigenfunction (as well as the corresponding Sturm-Liouville eigenvalue  $\chi_{m,n}(c_F)$) in a very robust way using a Bouwkamp type algorithm \cite{Bouwkamp50} with Legendre polynomial expansion. After one computes the eigenfunction $\psi_{m,n,\ell}$,  $\alpha_{m,n}(c_F)$ can be computed in a very stable manner using its relation to the generalized prolate spheroidal wave functions. We refer the reader to \cite{wang10,ZLWZ20} on these computational aspects.  We further refer the reader to \cite{meng23parameter} for related numerical experiments on a linear sampling method using prolate spheroidal wave functions to achieve parameter identification.
}

\section{Limited-aperture data and multi-frequency data} \label{section limited-aperture multi-frequency data}
The application of the generalized prolate spheroidal wave functions and their related functions is far less developed for the limited-aperture and multi-frequency inverse scattering problems. Remarkably, there also exists a  data-driven basis for a general symmetric set \cite{Slepian64}. In this section we first show that the  Picard criterion also applies to the limited-aperture and multi-frequency cases, and then obtain a  stability estimate for a spectral cutoff regularization.
\subsection{Scaled eigensystem for symmetric set}  \label{section GPSWF symmetry domain}
We first introduce a scaled eigensystem for a generic symmetric bounded open set  $A_h := \{hx: x\in A\}$. Noting that $\{\psi_{n}(\cdot;c), \alpha_{n}(c)\}_{n=0}^\infty$ is an eigensystem given in Section \ref{section GPSWF symmetric}, we introduce another orthogonal complete set  $\{  \widetilde{\psi}_{n}(\cdot;c)\}_{n \in \mathbb{N}}$ in  $L^2(A_h)$ by
\begin{eqnarray*}
 \widetilde{\psi}_{n}(x;c) := h^{-1} \psi_{n}\left(h^{-1}x ;c\right), \quad x\in\mathbb{R}^2.
\end{eqnarray*}
Then by  a change of variable, it follows from \eqref{prop operator eigenfunction gpswf A} that
\begin{eqnarray*}
\int_{A_h} e^{i  \frac{c}{h^2}  p \cdot p' }  \widetilde{\psi}_{n}(p' ;c) \ind p'
&=&  h^2  \alpha_{n}(c)  \widetilde{\psi}_{n}(p;c), \quad p \in A_h,
\end{eqnarray*}
and from \eqref{prop operator eigenfunction gpswf A double orthogonality eqn 1}--\eqref{prop operator eigenfunction gpswf A double orthogonality eqn 2} that 
\begin{eqnarray*} 
\int_{\mathbb{R}^2}  \widetilde{\psi}_{n}(p;c)  \widetilde{\psi}_{n'}(p;c) \ind p &=&\delta_{nn'},\\
\int_{A_h}  \widetilde{\psi}_{n}(p;c)  \widetilde{\psi}_{n'}(p;c) \ind p &=&  \left(\frac{c}{2\pi}\right)^2  |\alpha_{n}(c)|^2  \delta_{nn'}.
\end{eqnarray*}

\subsection{Reconstruction formula}
In Section \ref{subsection reconstruction formula limited multi-frequency formulation IP limited data} and Section \ref{subsection reconstruction formula limited multi-frequency formulation IP multi-frequency data}, we formulate the inverse problems for limited-aperture and multi-frequency data, respectively. In Section \ref{subsection reconstruction formula limited multi-frequency formulation IP Picard criterion} we use the Picard criterion to obtain the reconstruction formula.  
\subsubsection{Limited-aperture data} \label{subsection reconstruction formula limited multi-frequency formulation IP limited data}
We give a formulation of the Born inverse scattering with limited-aperture data. Recall  in  Section \ref{section introduction} that we aim  to determine the contrast $q\in L^2(\Omega)$ from the limited-aperture Born   data
\begin{equation} \label{born medium limited-aperture data continuous}
\{ \widetilde{u}_b^{\infty}(\hat{x};\hat{\theta};k): \hat{x}, \hat{\theta} \in \mathbb{S}_L\}.
\end{equation}
From the representation \eqref{born uinfty integral representation} of $\widetilde{u}_b^{\infty}(\hat{x};\hat{\theta};k)$,    the knowledge of  $\{\widetilde{u}_{b}^{\infty}(\hat{x};\hat{\theta};k): \hat{x}, \hat{\theta} \in \mathbb{S}_L\}$  is equivalent to the knowledge of $\{{ u_{b}^{\infty}}(p;k): p \in L\}$, 
\begin{eqnarray} \label{limited-aperture ub}
{ u_{b}^{\infty}}(p;k)  = \int_\Omega e^{i k p\cdot p'} q(p')   \ind p', \quad \forall p \in L,
\end{eqnarray}
where $L$ is the interior of $\{\hat{\theta}-\hat{x}: \hat{x}, \hat{\theta} \in \mathbb{S}_L\}$ which is symmetric with respect to the origin. 
\begin{remark}
The set $L$ represents the size of the limited-aperture in a more explicit way. Note that as the aperture gets smaller, the area of the set $L$ becomes smaller, which indicates that the inverse problem gets more ill-posed (from the point of view of analytic unique continuation); see Figure \ref{figure plot L} for an illustration. 
\end{remark}
 Suppose that there exists a positive constant $c_L$ such that   $\Omega \subset D_L:= \{c_Lx/k: x\in L\}$. This is guaranteed for aperture size large than one half since the set $L$ contains a small neighborhood of the origin. For aperture size smaller than or equal to one half where the set $L$ does not contain a small neighborhood of the origin (see Figure \ref{figure plot L}), we have to impose  a priori information that there exists $c_L$ such that   $\Omega \subset D_L$ { (this seems to be a restriction on $\Omega$, and it would be interesting to relax such a restriction; for instance, one possible way is to extrapolate the partial data in $L$ to obtain the full aperture data in $B(0,2)$)}. By equation \eqref{limited-aperture ub}, we then have the knowledge of $\{{   u_{b,L}^{\infty}}(p): p \in  D_L$\}  where
\begin{eqnarray} \label{limited-aperture ubL} 
{   u_{b,L}^{\infty}}(p)  := \int_{D_L} e^{i \frac{k^2}{c_L} p \cdot p'}  \underline{q}(p')   \ind p' := (\mathcal{K}^L \underline{q})(p), \quad \forall p \in D_L,
\end{eqnarray}
and we denote the associated operator by $\mathcal{K}^L: L^2(D_L) \to L^2(D_L)$. Now the limited-aperture inverse scattering is formulated as follows.

\textbf{Formulation of the inverse problem}: Assuming that there exists ${ c_L}$ such that $\Omega \subset D_L$, determine the contrast $\underline{q} \in L^2(D_L)$ from $\{{   u_{b,L}^{\infty}}(p): p \in  D_L$\}.

     \begin{figure}[ht!]
\includegraphics[width=0.23\linewidth]{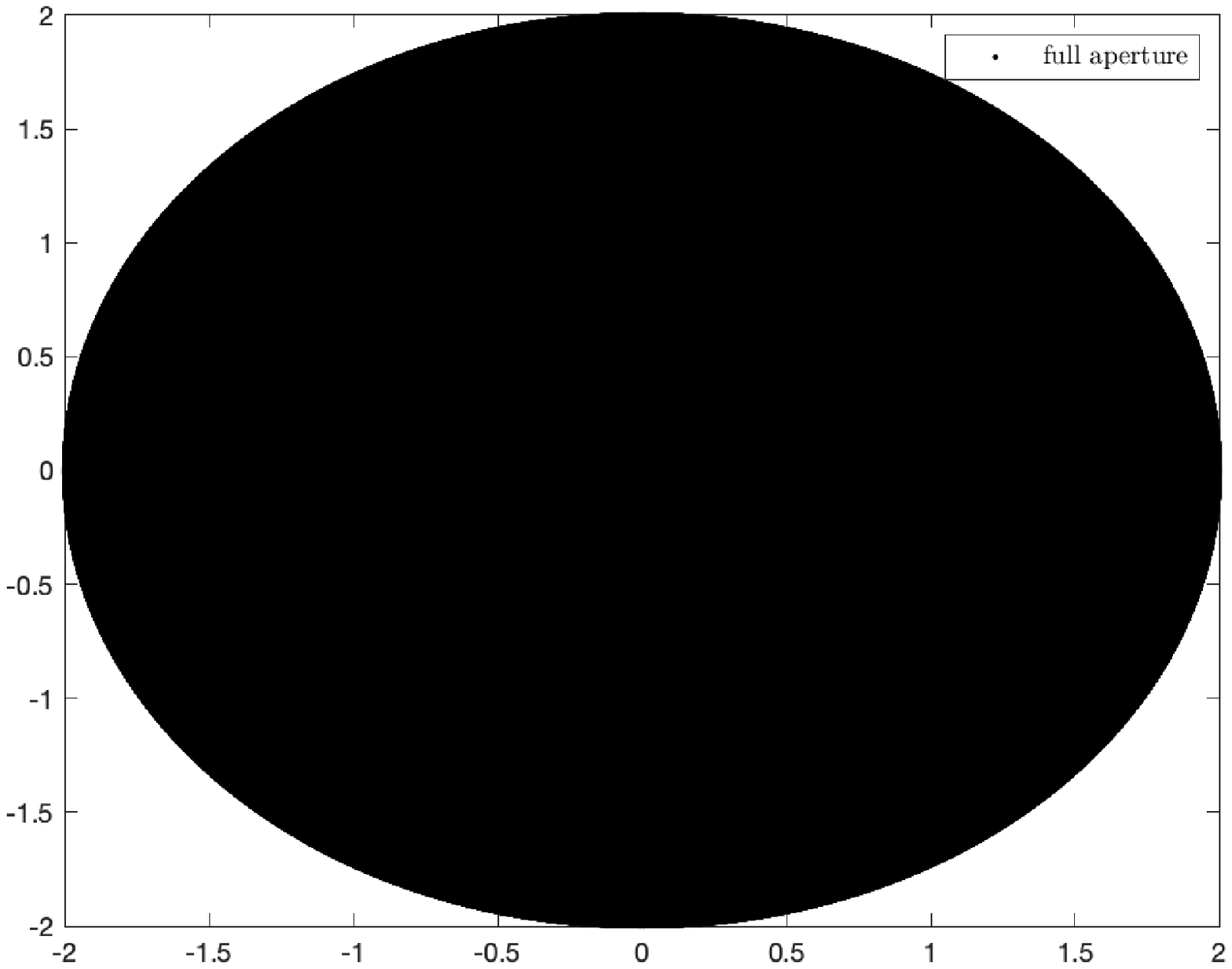}
\includegraphics[width=0.23\linewidth]{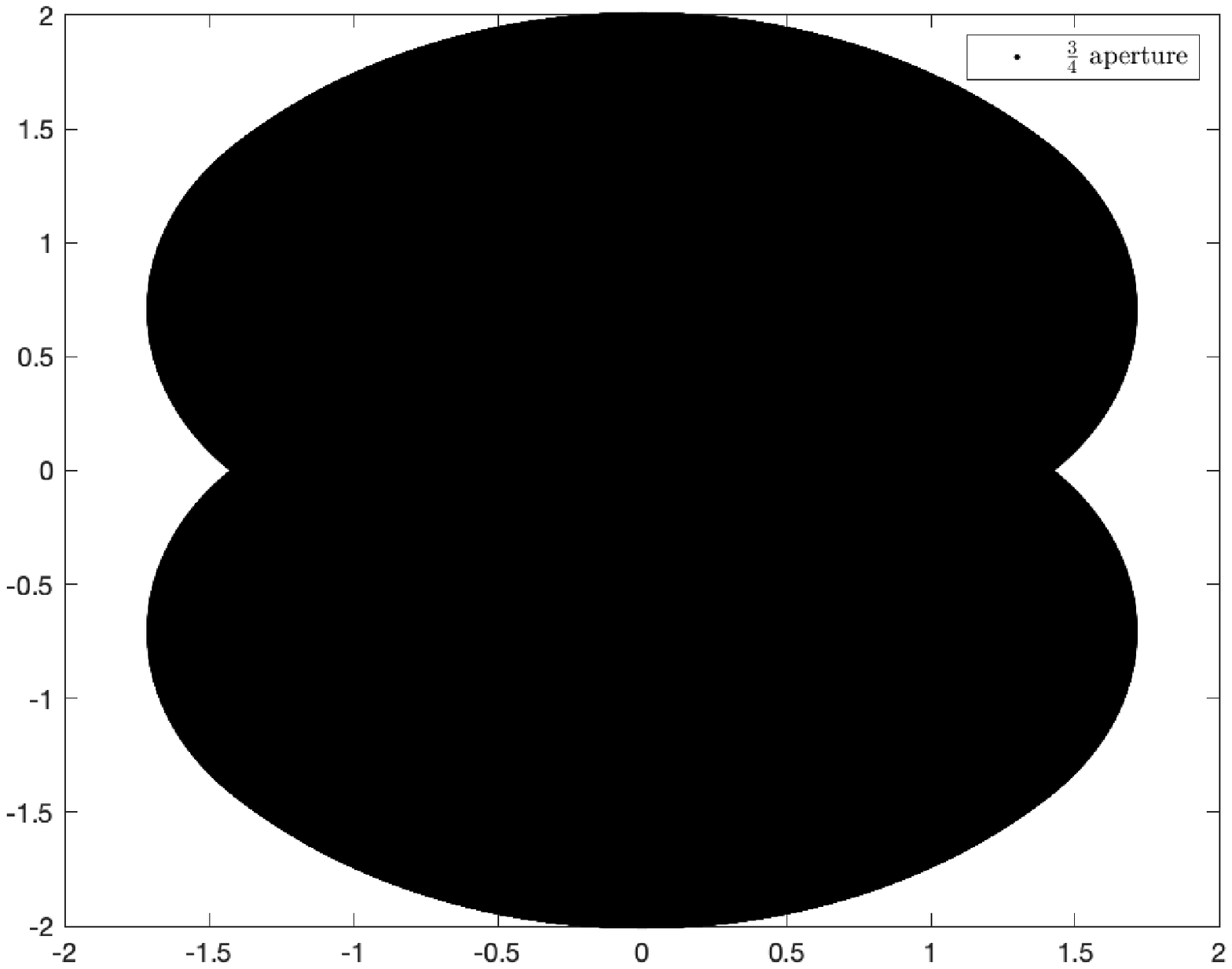}
\includegraphics[width=0.23\linewidth]{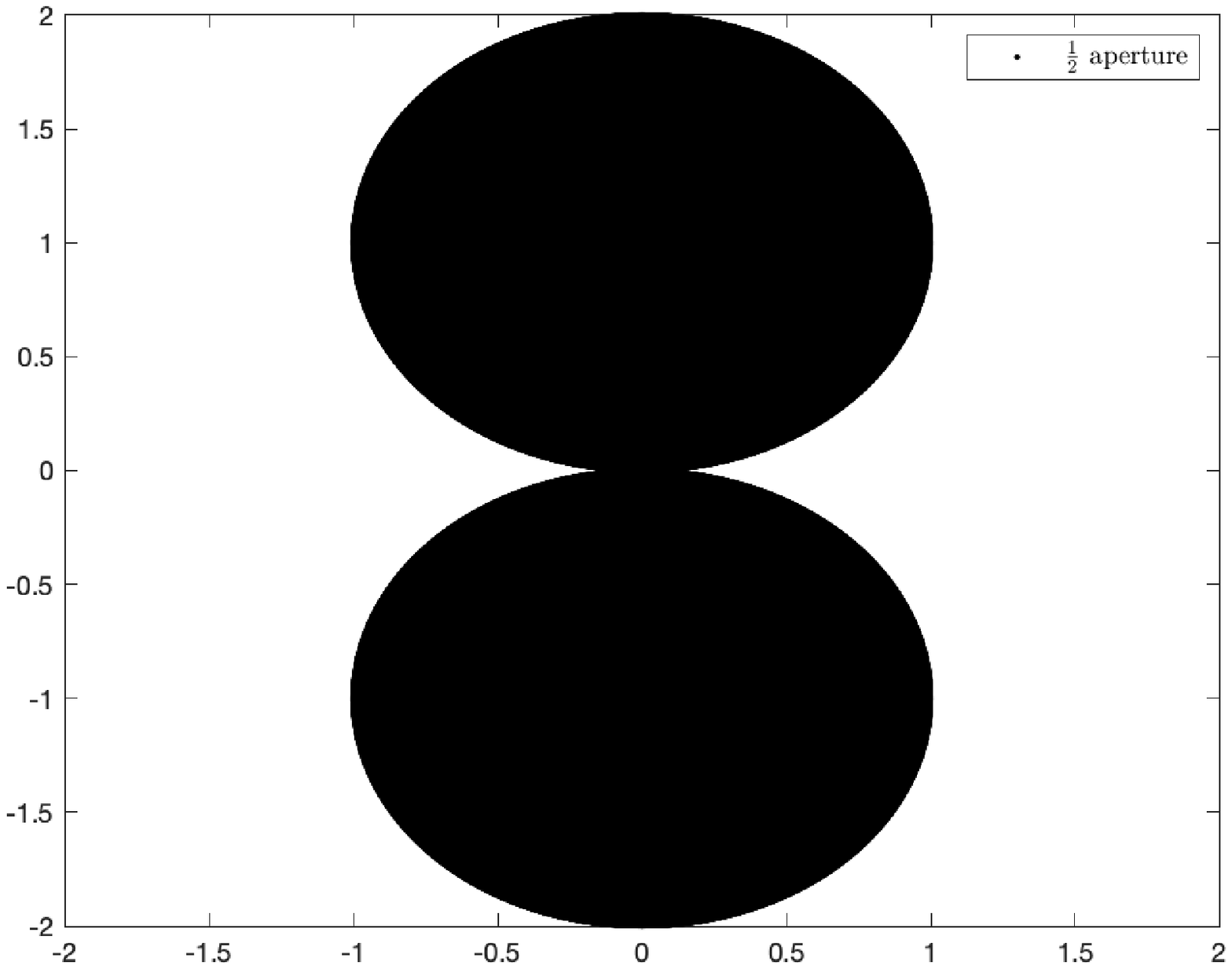}
\includegraphics[width=0.23\linewidth]{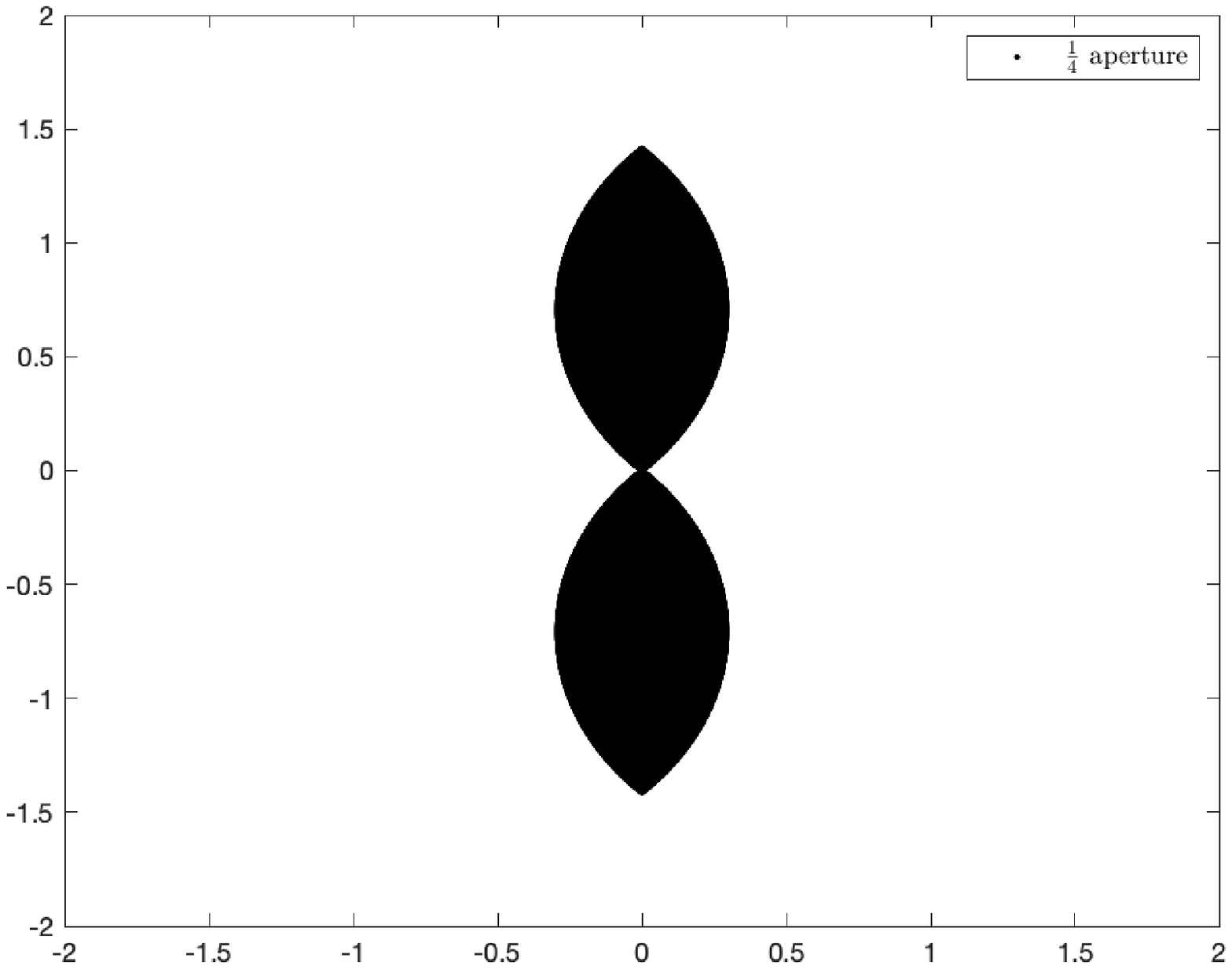}
     \caption{
     \linespread{1}
 The domain $L$ with respect to different aperture sizes. From left to right: full, $\frac{3}{4}$, $\frac{1}{2}$, $\frac{1}{4}$.
     } \label{figure plot L}
    \end{figure}
    
Now we introduce a suitable eigensystem for the study of \eqref{limited-aperture ubL}.  Define the eigensystem $\{ \psi^{L}_{n}(\cdot;c_L), \alpha^L_{n}(c_L)\}_{n \in \mathbb{N}}$ in $L^2(D_L)$ (by setting $A=L$, $c=c_L$, and $h=c_L/k$ in Section \ref{section GPSWF symmetry domain}) such that
\begin{eqnarray*}
\int_{D_L} e^{i  \frac{k^2}{c_L}  p \cdot p' } \psi^L_{n}(p';c_L) \ind p' 
&=&  \big(\frac{c_L}{k}\big)^2  \alpha^L_{n}(c_L) \psi^L_{n}(p;c_L), \quad p \in D_L.
\end{eqnarray*}
In this case the double orthogonality reads
\begin{eqnarray*}
\int_{\mathbb{R}^2} \psi^L_{n}(p;c_L) \psi^L_{n'}(p;c_L) \ind p &=& \delta_{nn'},\\
\int_{D_L} \psi^L_{n}(p;c_L) \psi^L_{n'}(p;c_L) \ind p &=&  \left(\frac{c_L}{2\pi}\right)^2  |\alpha^L_{n}(c_L)|^2 \delta_{nn'}.
\end{eqnarray*}

\subsubsection{Multi-frequency data} \label{subsection reconstruction formula limited multi-frequency formulation IP multi-frequency data}
In the multi-frequency case, recall  in  Section \ref{section introduction} that we aim  to determine  the contrast $q\in L^2(\Omega)$ from the following  Born  data with two opposite observation directions (which are $\pm \hat{x}^*  \in \mathbb{S}$):
 \begin{equation} \label{born medium multi-frequency data two opposite direction continuous}
\{ \widetilde{u}_b^{\infty}(\pm \hat{x}^*; \hat{\theta};k): \hat{\theta} \in \mathbb{S}, k \in (0,K),  K>0\}.
\end{equation}
For this multi-frequency problem, we have that  the knowledge of  $\{\widetilde{u}_{b}^{\infty}(\pm \hat{x}^*;\hat{\theta};k): \hat{\theta}  \in \mathbb{S}, k \in (0,K)\}$  is equivalent to the knowledge of $\{{ u_{b}^{\infty}}(p;K): p \in M \}$ where $M$  is the interior of   $\{a\hat{\theta} \pm a\hat{x}^*: \hat{\theta} \in \mathbb{S}, a \in (0,1)\}$ and
\begin{eqnarray} \label{multi-frequency data ub}
{ u_{b}^{\infty}}(p;K)  = \int_\Omega e^{i K p\cdot p'} q(p')   \ind p', \quad \forall p \in M.
\end{eqnarray}
 Note that $M$ is symmetric with respect to the origin.  Suppose that there exists $c_M$ such that   $\Omega \subset D_M=\{c_My/K: y \in M\}$.  By  \eqref{multi-frequency data ub}, we then have the knowledge of ${   u_{b,M}^{\infty}}(p)$ for $p \in  D_M$  where
\begin{eqnarray} \label{multi-frequency data ubM}
{   u_{b,M}^{\infty}}(p)  := \int_{D_M} e^{i \frac{K^2}{c_M} p \cdot p'}  \underline{q}(p')   \ind p':= (\mathcal{K}^M \underline{q})(p), \quad \forall p \in D_M,
\end{eqnarray}
and we denote the associated operator by $\mathcal{K}^M: L^2(D_M) \to L^2(D_M)$. Now the multi-frequency inverse scattering is formulated as follows.

\textbf{Formulation of the inverse problem}: Assuming that there exists ${ c_M}$ such that $\Omega \subset D_M$, determine the contrast $\underline{q} \in L^2(D_M)$ from $\{{   u_{b,M}^{\infty}}(p): p \in  D_M$\}.

Now we introduce a suitable eigensystem for the study of \eqref{multi-frequency data ubM}. Define the eigensystem $\{ \psi^{M}_{n}(\cdot;c_M), \alpha^M_{n}(c_M)\}_{n \in \mathbb{N}}$ in $D_M$ (by setting $c=c_M$, $h=\frac{c_M}{K}$, and $A=M$ in Section \ref{section GPSWF symmetry domain})  such that
\begin{eqnarray*}
\int_{D_M} e^{i  \frac{K^2}{c_M}  p \cdot p' } \psi^M_{n}(p;c_M) \ind p' 
&=&  \left(\frac{c_M}{K}\right)^2  \alpha^M_{n}(c_M) \psi^M_{n}(p;c_M), \quad p \in D_M.
\end{eqnarray*}
In this case the double orthogonality reads
\begin{eqnarray*}
\int_{\mathbb{R}^2} \psi^M_{n}(p;c_M) \psi^M_{n'}(p;c_M) \ind p &=&  \delta_{nn'},\\
\int_{D_M} \psi^M_{n}(p;c_M) \psi^M_{n'}(p;c_M) \ind p &=&  \left(\frac{c_M}{2\pi}\right)^2  |\alpha^M_{n}(c_M)|^2   \delta_{nn'}.
\end{eqnarray*}
\subsubsection{Picard criterion} \label{subsection reconstruction formula limited multi-frequency formulation IP Picard criterion}
Now we are ready to obtain the following reconstruction formulas by Picard criterion.
\begin{theorem}
 Let $\underline{q}$ be the extension function that $\underline{q}=q$ in $\Omega$ and $\underline{q}=0$ outside $\Omega$ almost everywhere.  
\begin{itemize}
\item 
Suppose that  there exists $c_L$ such that   $\Omega \subset D_L=\{c_Lx/k: x\in L\}$, where $L$ is the interior of $\{\hat{\theta}-\hat{x}: \hat{x}, \hat{\theta} \in \mathbb{S}_L\}$ and $\mathbb{S}_L=\{x: x \in \mathbb{S}, \arg x \in [-\Theta,\Theta],  0<\Theta<\pi\}$. Then $\underline{q}$ is uniquely solved by
\begin{eqnarray*}  
\underline{q}  = \sum_{n\in\mathbb{N}}\left(\frac{k }{c_L} \right)^2 \frac{1}{  \alpha^L_n(c_L)   } \left\langle {   u_{b,L}^{\infty}}, \frac{\psi^L_n(\cdot;c_L)}{\|\psi^L_n(\cdot;c_L)\|} \right\rangle_{D_L}   \frac{\psi^L_n(\cdot;c_L)}{\|\psi^L_n(\cdot;c_L)\|},
\end{eqnarray*}
where the convergence is in $L^2(D_L)$.
\item
Suppose that  there exists $c_M$  such that   $\Omega \subset D_M=\{c_Mx/K: x\in M\}$ where $M$  is the interior of   $\{a\hat{\theta} \pm a\hat{x}^*: \hat{\theta} \in \mathbb{S}, a \in (0,1)\}$. Then $\underline{q}$ is uniquely solved by
\begin{eqnarray*}  
\underline{q} = \sum_{n\in\mathbb{N}}\left(\frac{K}{c_M} \right)^2 \frac{1}{  \alpha_n^M(c_M)   } \left\langle {   u_{b,M}^{\infty}}, \frac{\psi^M_n(\cdot;c_M)}{\|\psi^M_n(\cdot;c_M)\|} \right\rangle_{D_M}   \frac{\psi^M_n(\cdot;c_M)}{\|\psi^M_n(\cdot;c_M)\|},
\end{eqnarray*}
where the convergence is in $L^2(D_M)$.
\end{itemize}
\end{theorem}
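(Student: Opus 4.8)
The plan is to follow verbatim the argument of Theorem \ref{thm chi_q series expansion full aperture}, replacing the disk eigensystem $\{\psi^F_{m,n,\ell}(\cdot;c_F)\}$ by the scaled symmetric-set eigensystems $\{\psi^L_n(\cdot;c_L)\}$ and $\{\psi^M_n(\cdot;c_M)\}$ introduced in Section \ref{section GPSWF symmetry domain}. The two bullet points are structurally identical, so I would carry out the limited-aperture case in detail and then recover the multi-frequency formula by the substitutions $c_L \mapsto c_M$, $k \mapsto K$, $D_L \mapsto D_M$ at the very end.

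First, recall from Section \ref{section GPSWF symmetric} that the symmetric-set eigensystem is complete in $L^2(A)$ and that every eigenvalue $\alpha_n(c)$ is non-zero by the energy argument of \cite[p.~3012]{Slepian64}; both properties transfer to the scaled system $\{\psi^L_n(\cdot;c_L)\}$ on $L^2(D_L)$ directly from the change-of-variables construction in Section \ref{section GPSWF symmetry domain}, where the eigenvalue is inherited unchanged as $\alpha^L_n(c_L)=\alpha_n(c_L)$. Since $\underline{q} \in L^2(D_L)$, I would expand it in this complete orthogonal basis as $\underline{q}=\sum_{n\in\mathbb{N}} q_n\, \psi^L_n(\cdot;c_L)/\|\psi^L_n(\cdot;c_L)\|$, with convergence in $L^2(D_L)$. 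Applying the operator $\mathcal{K}^L$ of \eqref{limited-aperture ubL} term by term and invoking the eigenvalue relation
\begin{eqnarray*}
\int_{D_L} e^{i \frac{k^2}{c_L} p\cdot p'}\psi^L_n(p';c_L)\ind p' = \Big(\frac{c_L}{k}\Big)^2 \alpha^L_n(c_L)\,\psi^L_n(p;c_L)
\end{eqnarray*}
then yields
\begin{eqnarray*}
{ u_{b,L}^{\infty}}(p) = \sum_{n\in\mathbb{N}} \Big(\frac{c_L}{k}\Big)^2 \alpha^L_n(c_L)\, q_n \, \frac{\psi^L_n(p;c_L)}{\|\psi^L_n(\cdot;c_L)\|},
\end{eqnarray*}
the series again converging in $L^2(D_L)$.

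Next, since the data ${ u_{b,L}^{\infty}}$ is fully known in $L^2(D_L)$ and the eigenvalues $\alpha^L_n(c_L)$ are all non-zero, I would project onto $\psi^L_n(\cdot;c_L)/\|\psi^L_n(\cdot;c_L)\|$ and invert to obtain $q_n = (k/c_L)^2 (\alpha^L_n(c_L))^{-1} \langle { u_{b,L}^{\infty}}, \psi^L_n(\cdot;c_L)/\|\psi^L_n(\cdot;c_L)\|\rangle_{D_L}$, which is precisely the stated formula; uniqueness follows because the coefficients are determined by the (nonzero) eigenvalues and the fixed data. The multi-frequency formula is then obtained by repeating this computation with the eigensystem $\{\psi^M_n(\cdot;c_M)\}$ on $D_M$ and its eigenvalue relation. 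I do not expect a genuine technical obstacle here, as the result is of corollary type; the only point requiring care — and the real substance behind the argument — is the non-vanishing of the eigenvalues $\alpha^L_n$ and $\alpha^M_n$, which is exactly what legitimizes dividing by them and hence what makes the Picard-type reconstruction well defined. This is guaranteed by Slepian's energy argument for arbitrary symmetric sets, and it is the feature that distinguishes the symmetric-set theory from a generic compact operator whose spectrum could contain zero.
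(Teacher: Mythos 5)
Your proposal is correct and takes essentially the same approach as the paper: the paper's own proof is just the one-line remark that the argument of the full-aperture theorem (Theorem \ref{thm chi_q series expansion full aperture}) applies verbatim, and your expansion--apply--project computation with the scaled symmetric-set eigensystems is exactly that argument spelled out. Your emphasis on the non-vanishing of the eigenvalues $\alpha^L_n$, $\alpha^M_n$ via Slepian's energy argument correctly identifies the one substantive ingredient that justifies the division in the Picard series.
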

\begin{proof}
The proof is exactly the same as the proof of Theorem \ref{thm chi_q series expansion full aperture}.
\end{proof}
\begin{remark}
Noting that the eigenfunctions $\{\psi^L_{n}(\cdot;c_L)\}_{n\in\mathbb{N} }$ and $\{\psi^M_{n}(\cdot;c_M)\}_{n\in\mathbb{N} }$ also extend analytically to $\mathbb{R}^2$, are  doubly orthogonal, and are complete in a proper class of band-limited functions in multiple dimensions, we can  also interpret the above reconstruction formulas   from the viewpoint of data processing and analytic extrapolation (by following Section \ref{section Picard criterion analytic extrapolation} in exactly the same way).
\end{remark}
\subsection{Regularization and stability}
In both the limited-aperture  and the multi-frequency cases, we have considered an operator equation
 \begin{eqnarray} \label{section regularization def K}
\mathcal{K} \tilde{q} = \tilde{u}, \quad \mathcal{K}: L^2(D) \to L^2(D)
\end{eqnarray}
where
\begin{itemize}
\item for the  limited-aperture case: $\mathcal{K} $ is given by \eqref{limited-aperture ubL},   $D=D_L$, and $ \tilde{u} = {   u_{b,L}^{\infty}}$;
\item for the multi-frequency case: $\mathcal{K} $ is given by \eqref{multi-frequency data ubM},   $D=D_M$, and $ \tilde{u} = {   u_{b,M}^{\infty}}$.
\end{itemize}
Therefore we first study the regularization and stability of this abstract equation \eqref{section regularization def K}, and then state the result for each of the   above cases. In each case, we have found an orthogonal eigensystem $\{\phi_n,\mu_n\}_{n\in\mathbb{N}}$ in $L^2(D)$ with real-valued $\phi_n$ such that
\begin{equation} \label{section regularization def eigensystem}
\mathcal{K}  \phi_n = \mu_n \phi_n, \quad \|\phi_n\| = \lambda_n>0,
\end{equation}
$|\mu_n|>0$,  $\lim_{n\to\infty}|\mu_n|=0$,
and $\tilde{q}$ is solved in the form of 
\begin{equation} \label{section regularization representation chi_q}
\tilde{q} = \sum_{n \in \mathbb{N}}  \frac{1}{\mu_n} \left\langle  \tilde{u}, \frac{\phi_n}{\lambda_n} \right\rangle_{L^2(D)} \frac{\phi_n}{\lambda_n}.
\end{equation}

In practice, the data $ \tilde{u}^\delta$ is only known up to an error $\delta$ with $\| \tilde{u}- \tilde{u}^\delta\|_{L^2(D)}\le \delta$ and this motivates us to consider a regularization strategy  based on spectral cutoff where we define $\mathcal{R}_{\alpha}:L^2(D) \to L^2(D)$ by
\begin{equation} \label{section regularization representation chi_q spectral cutoff}
\mathcal{R}_{\alpha}  \tilde{u} = \sum_{|\mu_n|>\alpha} \frac{1}{\mu_n} \left\langle  \tilde{u}, \frac{\phi_n}{\lambda_n} \right\rangle_{L^2(D)} \frac{\phi_n}{\lambda_n}.
\end{equation}

\begin{lemma} \label{theorem full aperture Ru-chiq estimate}
Suppose that $\| \tilde{u}^\delta -  \tilde{u}\|_{L^2(D)} \le \delta$. Let $\tilde{q} \in Range((\mathcal{K} ^*\mathcal{K} )^{\sigma/2})$ for some $\sigma>0$ and $\|(\mathcal{K} ^*\mathcal{K} )^{-\sigma/2} \tilde{q}\|_{L^2(D)} \le E$ for some constant $E$. Let $\alpha(\delta)=c_0 (\delta/E)^{1/(1+\sigma)}$ with some positive constant $c_0$; then it holds that
\begin{eqnarray}\label{theorem full aperture Ru-chiq estimate eqn}
\|\mathcal{R}_{\alpha(\delta)}   \tilde{u}^\delta - \tilde{q}\|_{L^2(D)} \le  \delta^{\sigma/(1+\sigma)} E^{1/(1+\sigma)}\big(1/c_0 + c_0^\sigma  \big).
\end{eqnarray}
\end{lemma}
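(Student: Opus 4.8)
The plan is to follow the standard template for a spectral cutoff regularization under a H\"older-type source condition: split the total error into a data-propagation part and a truncation part, estimate each separately, and then insert the stated choice $\alpha(\delta)$.

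First I would record the spectral picture. By \eqref{section regularization def eigensystem} the functions $\phi_n/\lambda_n$ are orthonormal and, by the completeness established in Section~\ref{section GPSWF symmetric}, they form an orthonormal basis of $L^2(D)$, so Parseval's identity is available. Because the integral kernel defining $\mathcal{K}$ in \eqref{limited-aperture ubL} (resp. \eqref{multi-frequency data ubM}) is symmetric in its two arguments and each $\phi_n$ is real-valued, conjugating $\mathcal{K}\phi_n = \mu_n\phi_n$ gives $\mathcal{K}^*\phi_n = \overline{\mu_n}\phi_n$, whence $\mathcal{K}^*\mathcal{K}\phi_n = |\mu_n|^2\phi_n$. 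Consequently $(\mathcal{K}^*\mathcal{K})^{\sigma/2}$ acts as multiplication by $|\mu_n|^\sigma$ on this basis, and writing $\tilde{q} = \sum_n q_n \phi_n/\lambda_n$ with $q_n := \langle \tilde{q}, \phi_n/\lambda_n\rangle$, the source condition $\|(\mathcal{K}^*\mathcal{K})^{-\sigma/2}\tilde{q}\| \le E$ is equivalent to the coefficient bound $\sum_n |\mu_n|^{-2\sigma}|q_n|^2 \le E^2$. This reformulation is the one genuinely nontrivial preparatory step, since $\mathcal{K}$ itself is only complex-symmetric (some eigenvalues are purely imaginary), so the clean identity for $\mathcal{K}^*\mathcal{K}$ must be argued rather than assumed.

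Next I would split $\mathcal{R}_{\alpha}\tilde{u}^\delta - \tilde{q} = \mathcal{R}_\alpha(\tilde{u}^\delta - \tilde{u}) + (\mathcal{R}_\alpha \tilde{u} - \tilde{q})$ and bound the two summands. For the data-propagation term, the orthonormality of $\phi_n/\lambda_n$ together with $|\mu_n|>\alpha$ on the active index set of \eqref{section regularization representation chi_q spectral cutoff} and Bessel's inequality give $\|\mathcal{R}_\alpha(\tilde{u}^\delta - \tilde{u})\| \le \alpha^{-1}\|\tilde{u}^\delta - \tilde{u}\| \le \delta/\alpha$. For the truncation term I use $\langle \tilde{u}, \phi_n/\lambda_n\rangle = \mu_n q_n$, which follows from \eqref{section regularization representation chi_q} and $\tilde{u} = \mathcal{K}\tilde{q}$, so that $\mathcal{R}_\alpha\tilde{u} = \sum_{|\mu_n|>\alpha} q_n \phi_n/\lambda_n$ and hence $\mathcal{R}_\alpha\tilde{u} - \tilde{q} = -\sum_{|\mu_n|\le\alpha} q_n\phi_n/\lambda_n$; its squared norm equals $\sum_{|\mu_n|\le\alpha}|q_n|^2$, and inserting the factor $1 \le (\alpha/|\mu_n|)^{2\sigma}$, valid whenever $|\mu_n|\le\alpha$, yields $\|\mathcal{R}_\alpha\tilde{u} - \tilde{q}\| \le \alpha^\sigma E$ by the coefficient bound above.

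Finally I would combine the two estimates into $\|\mathcal{R}_\alpha\tilde{u}^\delta - \tilde{q}\| \le \delta/\alpha + \alpha^\sigma E$ and substitute $\alpha(\delta) = c_0(\delta/E)^{1/(1+\sigma)}$; a direct computation shows $\delta/\alpha(\delta) = c_0^{-1}\delta^{\sigma/(1+\sigma)}E^{1/(1+\sigma)}$ and $\alpha(\delta)^\sigma E = c_0^{\sigma}\delta^{\sigma/(1+\sigma)}E^{1/(1+\sigma)}$, and summing gives exactly \eqref{theorem full aperture Ru-chiq estimate eqn}. I do not expect any serious obstacle beyond the bookkeeping; the only point requiring genuine care is the spectral reformulation of the source condition in the first step, precisely because $\mathcal{K}$ is non-self-adjoint, and one should note that the prescribed $\alpha(\delta)$ is a balancing (rather than strictly minimizing) choice, which is all that the two-term bound requires.
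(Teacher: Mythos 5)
Your proposal is correct and follows essentially the same route as the paper's proof: the identical splitting $\mathcal{R}_\alpha(\tilde{u}^\delta-\tilde{u}) + (\mathcal{R}_\alpha\tilde{u}-\tilde{q})$, the bound $\|\mathcal{R}_\alpha\|\le 1/\alpha$ for the data term, the coefficient-level estimate $\alpha^\sigma E$ for the truncation term, and the same balancing choice of $\alpha(\delta)$. The only difference is that you explicitly justify the spectral identity $\mathcal{K}^*\mathcal{K}\phi_n=|\mu_n|^2\phi_n$ (via the real eigenfunctions and the complex-symmetric kernel), a step the paper uses implicitly when it writes the source condition as $\sum_n|\mu_n|^{-2\sigma}|\langle\tilde{q},\phi_n/\lambda_n\rangle|^2\le E^2$; this is a welcome clarification rather than a different approach.
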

\begin{proof}
Note that
\begin{eqnarray}
\|\mathcal{R}_{\alpha(\delta)}   \tilde{u}^\delta - \tilde{q}\|_{L^2(D)} &=& \|\mathcal{R}_{\alpha(\delta)}   \tilde{u}^\delta -\mathcal{R}_{\alpha(\delta)}   \tilde{u} + \mathcal{R}_{\alpha(\delta)}   \tilde{u} - \tilde{q}\|_{L^2(D)} \nonumber  \\
&\le& \|\mathcal{R}_{\alpha(\delta)}   \tilde{u}^\delta -\mathcal{R}_{\alpha(\delta)}   \tilde{u}\|_{L^2(D)} + \|\mathcal{R}_{\alpha(\delta)}   \tilde{u} - \tilde{q}\|_{L^2(D)}.\label{proof full aperture Ru-chiq estimate eqn 1}
\end{eqnarray}
Now we estimate each of the terms in the right hand side of the above inequality.

For the first term, it is directly seen that
\begin{eqnarray}\label{proof full aperture Ru-chiq estimate eqn 2}
 \|\mathcal{R}_{\alpha}   \tilde{u}^\delta -\mathcal{R}_{\alpha}   \tilde{u}\|_{L^2(D)} \le \|\mathcal{R}_{\alpha}\| \| \tilde{u}^\delta -  \tilde{u}\|_{L^2(D)} \le  \delta \|\mathcal{R}_{\alpha}\| \le   \frac{\delta}{\alpha}
\end{eqnarray}
where in the last step we have applied $\|\mathcal{R}_{\alpha}\| \le \frac{1}{\alpha}$ due to \eqref{section regularization representation chi_q spectral cutoff}.

For the second term, we observe that  $\|(\mathcal{K}^*\mathcal{K} )^{-\sigma/2} \tilde{q}\|_{L^2(D)} \le E$ implies that
\begin{eqnarray*}
\|(\mathcal{K}^*\mathcal{K} )^{-\sigma/2} \tilde{q}\|_{L^2(D)}^2=\sum_{n \in \mathbb{N}}  |\mu_n|^{-2\sigma} |\langle\tilde{q}, \phi_n/\lambda_n \rangle_{L^2(D)} |^2 \le E^2
\end{eqnarray*}
and thereby
\begin{eqnarray} \label{proof full aperture Ru-chiq estimate eqn 3}
&&\|\mathcal{R}_{\alpha }   \tilde{u} - \tilde{q}\|^2_{L^2(D)} = \Big\| \sum_{|\mu_n|\le \alpha} \frac{\langle  \tilde{u}, \phi_n \rangle }{\lambda^2_n \mu_n} \phi_n \Big\|^2_{L^2(D)} =\Big\| \sum_{|\mu_n|\le \alpha} \langle \tilde{q}, \phi_n/\lambda_n \rangle \frac{\phi_n}{\lambda_n} \Big\|^2_{L^2(D)} \nonumber \\
&=& \sum_{|\mu_n|\le \alpha}  |\mu_n|^{2\sigma} |\mu_n|^{-2\sigma} |\langle \tilde{q}, \phi_n/\lambda_n \rangle |^2 \le \alpha^{2\sigma} \sum_{|\mu_n|\le \alpha}   |\mu_n|^{-2\sigma} |\langle \tilde{q}, \phi_n/\lambda_n \rangle |^2 \nonumber \\
 &\le&  \alpha^{2\sigma} \|(\mathcal{K}^*\mathcal{K} )^{-\sigma/2} \tilde{q}\|_{L^2(D)}^2.
\end{eqnarray}

Finally \eqref{proof full aperture Ru-chiq estimate eqn 1}--\eqref{proof full aperture Ru-chiq estimate eqn 3} yields that
\begin{eqnarray*}
\|\mathcal{R}_{\alpha(\delta)}   \tilde{u}^\delta - \tilde{q}\|_{L^2(D)} &\le&  \frac{\delta}{\alpha(\delta)} +  [\alpha(\delta)]^{\sigma} \|(\mathcal{K}^*\mathcal{K})^{-\sigma/2} \tilde{q}\|_{L^2(D)} \le \frac{\delta}{\alpha(\delta)} +  [\alpha(\delta)]^{\sigma} E.
\end{eqnarray*}
Let $\alpha(\delta) = c_0 (\delta/E)^{1/(1+\sigma)}$ with some positive constant $c_0$; then
$$
\|\mathcal{R}_{\alpha(\delta)}  u^\delta - \tilde{q}\|_{L^2(D)} \le  \frac{\delta}{c_0 (\delta/E)^{1/(1+\sigma)}} +   (c_0 (\delta/E)^{1/(1+\sigma)})^{\sigma} E = \delta^{\sigma/(1+\sigma)} E^{1/(1+\sigma)}\big(1/c_0 + c_0^\sigma  \big).
$$
This proves \eqref{theorem full aperture Ru-chiq estimate eqn} and  completes the proof.
\end{proof}

Note that the regularity assumption on $\tilde{q}$ (i.e., $\tilde{q} \in Range((\mathcal{K} ^*\mathcal{K} )^{\sigma/2})$ for some $\sigma>0$) is less explicit compared to the one (i.e., $\underline{q} \in  {H}^s(D_F)$ with $0<s<1/2$) in the full aperture case in Theorem \ref{section SLP/regularization theorem qdelta-q estimate}. This is due to the lack of a Sturm-Liouville differential operator for the limited-aperture and multi-frequency cases (to the best of our knowledge). It may be possible to impose  the same explicit  assumption by combining the result of Theorem \ref{section SLP/regularization theorem qdelta-q estimate} and an extrapolation algorithm where  the partial data is extrapolated to approximate the full aperture data with a stability estimate.
 
 Now we apply Lemma \ref{theorem full aperture Ru-chiq estimate} to the limited-aperture data and multi-frequency data cases.
\begin{theorem}
Let $\underline{q}$ be the extension function that $\underline{q}=q$ in $\Omega$ and $\underline{q}=0$ outside $\Omega$ almost everywhere.
\begin{itemize}
\item
Suppose that  there exists $c_L$ such that   $\Omega \subset D_L=\{c_Lx/k: x\in L\}$ where $L$ is the interior of $\{\hat{\theta}-\hat{x}: \hat{x}, \hat{\theta} \in \mathbb{S}_L\}$ and $\mathbb{S}_L=\{x: x \in \mathbb{S}, \arg x \in [-\Theta,\Theta],  0<\Theta<\pi\}$. Let $\mathcal{R}_{\alpha}^L:L^2(D_L) \to L^2(D_L)$ be given by
\begin{equation*} 
\mathcal{R}^L_{\alpha} u = \sum_{(\frac{c_L}{k})^2 |\alpha^L_n(c_L)| >\alpha} \left\langle u , \frac{\psi^L_n(\cdot;c_L)}{\|\psi^L_n(\cdot;c_L)\|} \right\rangle_{D_L}   \frac{\psi^L_n(\cdot;c_L)}{\|\psi^L_n(\cdot;c_L)\|}.
\end{equation*}
Suppose that $\|  {   u_{b,L}^{\infty,\delta}} -  {   u_{b,L}^{\infty}}\|_{L^2(D_L)} \le \delta$. Let $\mathcal{K}^L$ be given by \eqref{limited-aperture ubL},  and suppose that $\underline{q} \in Range((\mathcal{K} ^{L*}\mathcal{K}^L )^{\sigma/2})$ for some $\sigma>0$ and $\|(\mathcal{K}^{L*}\mathcal{K}^L )^{-\sigma/2} \underline{q}\|_{L^2(D_L)} \le E_1$ for some constant $E_1$. Let $\alpha(\delta)=c_1 (\delta/E_1)^{1/(1+\sigma)}$ with some positive constant $c_1$; then it holds that
\begin{eqnarray*}
\|\mathcal{R}_{\alpha(\delta)}^L   {   u_{b,L}^{\infty,\delta}}  - \underline{q}\|_{L^2(D_L)} \le  \delta^{\sigma/(1+\sigma)} E_1^{1/(1+\sigma)}\big(1/c_1 + c_1^\sigma  \big).
\end{eqnarray*}
\item
Suppose that  there exists $c_M$ such that   $\Omega \subset D_M=\{c_Mx/K: x\in M\}$ where $M$  is the interior of   $\{a\hat{\theta} \pm a\hat{x}^*: \hat{\theta} \in \mathbb{S}, a \in (0,1)\}$. Let $\mathcal{R}_{\alpha}^M:L^2(D_M) \to L^2(D_M)$ be given by
\begin{equation*} 
\mathcal{R}^M_{\alpha} u = \sum_{(\frac{c_M}{k})^2 |\alpha^M_n(c_M)| >\alpha} \left\langle u , \frac{\psi^M_n(\cdot;c_M)}{\|\psi^M_n(\cdot;c_M)\|} \right\rangle_{D_M}   \frac{\psi^M_n(\cdot;c_M)}{\|\psi^M_n(\cdot;c_M)\|}.
\end{equation*}
Suppose that $\| {   u_{b,M}^{\infty,\delta}}  -  {   u_{b,M}^{\infty}}\|_{L^2(D_M)} \le \delta$. Let $\mathcal{K}^M$ be given by \eqref{multi-frequency data ubM}, and suppose that $\underline{q} \in Range((\mathcal{K} ^{M*}\mathcal{K}^M )^{\sigma/2})$ for some $\sigma>0$ and $\|(\mathcal{K}^{M*}\mathcal{K}^M )^{-\sigma/2} \underline{q}\|_{L^2(D_M)} \le E_2$ for some constant $E_2$. Let $\alpha(\delta)=c_2 (\delta/E_2)^{1/(1+\sigma)}$ with some positive constant $c_2$; then it holds that
\begin{eqnarray*}
\|\mathcal{R}_{\alpha(\delta)}^M   {   u_{b,M}^{\infty,\delta}} - \underline{q}\|_{L^2(D_M)} \le  \delta^{\sigma/(1+\sigma)} E_2^{1/(1+\sigma)}\big(1/c_2 + c_2^\sigma  \big).
\end{eqnarray*}
\end{itemize}
\end{theorem}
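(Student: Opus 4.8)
The plan is to deduce the theorem directly from the abstract stability result in Lemma~\ref{theorem full aperture Ru-chiq estimate}, by matching each of the two concrete settings to the operator equation \eqref{section regularization def K} together with its eigensystem \eqref{section regularization def eigensystem}. Once the dictionary between the concrete and abstract quantities is in place, each of the two estimates is an immediate instance of \eqref{theorem full aperture Ru-chiq estimate eqn}, so no new analytic work is needed.

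For the limited-aperture case I would set $D=D_L$, $\mathcal{K}=\mathcal{K}^L$ as in \eqref{limited-aperture ubL}, $\tilde u={u_{b,L}^{\infty}}$ and $\tilde u^\delta={u_{b,L}^{\infty,\delta}}$, and take the eigensystem required in \eqref{section regularization def eigensystem} to be
\[
\phi_n := \psi^L_n(\cdot;c_L), \qquad \mu_n := \Big(\frac{c_L}{k}\Big)^2 \alpha^L_n(c_L), \qquad \lambda_n := \|\psi^L_n(\cdot;c_L)\|_{L^2(D_L)} = \frac{c_L}{2\pi}|\alpha^L_n(c_L)|,
\]
where the eigenvalue relation $\mathcal{K}^L\phi_n=\mu_n\phi_n$ is the scaled eigenvalue equation recorded in Section~\ref{subsection reconstruction formula limited multi-frequency formulation IP limited data}, the value of $\lambda_n$ comes from the double orthogonality stated there, $|\mu_n|>0$ follows from the fact that every $\alpha_n(c)$ is non-zero by the energy argument of Section~\ref{section GPSWF symmetric}, and $\lim_{n\to\infty}|\mu_n|=0$ holds since $\mathcal{K}^L$ is a compact (Hilbert--Schmidt) integral operator. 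I would then verify that $\mathcal{R}^L_\alpha$ in the statement is exactly the spectral cutoff operator \eqref{section regularization representation chi_q spectral cutoff} for this eigensystem (the cutoff $(\frac{c_L}{k})^2|\alpha^L_n(c_L)|>\alpha$ is precisely $|\mu_n|>\alpha$), and that the Picard formula for $\underline q$ coincides with \eqref{section regularization representation chi_q}. With the hypotheses $\underline q\in\mathrm{Range}((\mathcal{K}^{L*}\mathcal{K}^L)^{\sigma/2})$, $\|(\mathcal{K}^{L*}\mathcal{K}^L)^{-\sigma/2}\underline q\|_{L^2(D_L)}\le E_1$, and the choice $\alpha(\delta)=c_1(\delta/E_1)^{1/(1+\sigma)}$, Lemma~\ref{theorem full aperture Ru-chiq estimate} (with $E=E_1$, $c_0=c_1$) yields the first estimate.

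The multi-frequency case is handled verbatim, replacing $L$ by $M$, $k$ by $K$, the eigensystem by $\{\psi^M_n(\cdot;c_M),\,(\frac{c_M}{K})^2\alpha^M_n(c_M)\}$, the operator by $\mathcal{K}^M$ of \eqref{multi-frequency data ubM}, the data by ${u_{b,M}^{\infty}}$, and the constants $E_1,c_1$ by $E_2,c_2$; a second application of Lemma~\ref{theorem full aperture Ru-chiq estimate} gives the second estimate.

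Since the analytic content is already contained in Lemma~\ref{theorem full aperture Ru-chiq estimate}, the only points requiring genuine verification — and hence the crux of the argument — are that the integral operators $\mathcal{K}^L,\mathcal{K}^M$ truly fit the abstract template: that they are compact, so that $|\mu_n|\to 0$, and complex-symmetric with real-valued orthogonal eigenfunctions. The latter is what guarantees $\mathcal{K}^{*}\phi_n=\overline{\mu_n}\,\phi_n$, hence $(\mathcal{K}^{*}\mathcal{K})^{-\sigma/2}\phi_n=|\mu_n|^{-\sigma}\phi_n$, which is exactly the diagonalization invoked in \eqref{proof full aperture Ru-chiq estimate eqn 3}. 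All of these properties are supplied by Slepian's theory summarized in Section~\ref{section GPSWF symmetric}, so no fresh estimate is needed; here the abstract range assumption on $\underline q$ plays the role that the explicit Sobolev regularity played in the full-aperture Theorem~\ref{section SLP/regularization theorem qdelta-q estimate}.
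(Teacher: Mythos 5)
Your proposal is correct and follows essentially the same route as the paper, whose entire proof is the one-line observation that both bullets are direct consequences of Lemma~\ref{theorem full aperture Ru-chiq estimate}; your explicit dictionary ($\phi_n=\psi^L_n$, $\mu_n=(\tfrac{c_L}{k})^2\alpha^L_n(c_L)$, $\lambda_n=\tfrac{c_L}{2\pi}|\alpha^L_n(c_L)|$, and likewise for $M$) together with the verification that real-valued eigenfunctions give $(\mathcal{K}^{*}\mathcal{K})^{-\sigma/2}\phi_n=|\mu_n|^{-\sigma}\phi_n$ is exactly the matching the paper leaves implicit.
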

\begin{proof}
These conclusions are the direct consequences of Lemma \ref{theorem full aperture Ru-chiq estimate}.
\end{proof}

\section{Conclusion}
In this paper we propose  data-driven basis functions for reconstructing the medium contrast using Born data, including  the full aperture, limited-aperture, and multi-frequency partial data. The data-driven basis, which originated in the study of a Fourier integral operator,  allows us to establish a Picard criterion for reconstructing the contrast. Another salient feature is that such a data-driven basis remarkably extends analytically to $\mathbb{R}^2$, is  doubly orthogonal, and is complete in the class of band-limited functions. This yields that the reconstruction formula by Picard criterion can be understood from the viewpoint of data processing and analytic extrapolation. Another feature is that the data-driven basis for a disk is also a basis for a Sturm-Liouville differential operator. This Sturm-Liouville differential operator brings additional regularity estimates that lead to estimating the $L^2$ approximation error for a spectral cutoff approximation of functions in $H^s$. This  approximation theory allows us to obtain a spectral cutoff  regularization strategy with an explicit stability estimate for noisy data. 
In a broader context, the data-driven basis in this paper can also be learned via a Legendre-Galerkin neural network and our analysis indeed serves as a mathematical foundation towards relevant machine learning algorithms.
{ The extension of our work to $\mathbb{R}^3$ necessarily requires the study of the generalization of the prolate spheroidal wave functions in $\mathbb{R}^3$.} 
Perhaps the most exciting extension  is to investigate  a possible data-driven basis for  the full model in which case some non-linear transformation is most likely needed.  

\bibliographystyle{SIAM}

\end{document}